\documentclass[reqno,12pt]{amsart}
\pdfoutput=1
\textheight 9.1in \textwidth 6.5in \topmargin -1cm
\oddsidemargin-.05cm \evensidemargin -.1cm \topmargin .1cm
\usepackage{amsmath,amsthm,amsfonts,amssymb,ifpdf}
\usepackage{amssymb}
\usepackage{amsmath}
\usepackage{amsthm}
\usepackage{graphicx}
\usepackage[all]{xy}
\usepackage{enumerate}
\usepackage{tikz-cd}

\newcommand{\RomanNumeralCaps}[1]
    {\MakeUppercase{\romannumeral #1}}
\theoremstyle{plain} 
\newtheorem{theorem}{Theorem}[section]
\newtheorem{proposition}[theorem]{Proposition}
\newtheorem{lemma}[theorem]{Lemma}
\newtheorem{corollary}[theorem]{Corollary}

\theoremstyle{definition} \newtheorem{definition}[theorem]{Definition}

\theoremstyle{remark} \newtheorem{remark}[theorem]{Remark}

\usepackage[all]{xy}

\ifpdf
  \usepackage[
    pdftex,
    colorlinks,%
    linkcolor=blue,citecolor=red,urlcolor=blue,
    hyperindex,%
    plainpages=false,%
    bookmarksopen,%
    bookmarksnumbered%
  ]{hyperref} 
 
 \usepackage{thumbpdf}
\else
  \usepackage{hyperref}
\fi

\def\deg{{\rm deg}}
\def\Sym{{\rm Sym}}
\def\Hilb{{\rm Hilb}}
\def\Quot{{\rm Quot}}
\def\Ker{{\rm Ker}}
\def\Pic{{\rm Pic}}
\def\Div{{\rm Div}}

\def\lim{{\rm lim}}

\newcommand{\ncom}{\newcommand}
\ncom{\mylabel}[1]{{\rm (#1)}\label{#1}}
\ncom{\Hom}{{\textit{Hom}}}
\ncom{\eop}{{\hfill $\Box$}}
\begin{document}
\baselineskip=16pt


\setcounter{tocdepth}{1}

\title[Diagonal property and weak point property]{Diagonal property and weak point property of higher rank divisors and certain Hilbert schemes}

\author{Arijit Mukherjee}
\address{Department of Mathematics, Indian Institute of Science Education and Research Tirupati, Srinivasapuram, Venkatagiri Road, Jangalapalli Village, Panguru (G.P), Yerpedu Mandal, Tirupati District, Andhra Pradesh - 517 619, India.}
\email{mukherjee7.arijit@gmail.com}

\author{D S Nagaraj}
\address{Department of Mathematics, Indian Institute of Science Education and Research Tirupati, Srinivasapuram, Venkatagiri Road, Jangalapalli Village, Panguru (G.P), Yerpedu Mandal, Tirupati District, Andhra Pradesh - 517 619, India.}
\email{dsn@labs.iisertirupati.ac.in}

\begin{abstract}
In this paper, we introduce the notion of the diagonal property and the weak point property for an ind-variety.  We prove that the ind-varieties of higher rank divisors of integral slopes on a smooth projective curve have the weak point property.  Moreover, we show that the ind-variety of $(1,n)$-divisors has the diagonal property and is a locally complete linear ind-variety and calculate its Picard group.  Furthermore, we obtain that the Hilbert schemes of a curve associated to the good partitions of a constant polynomial satisfy the diagonal property.  In the process of obtaining this, we provide the exact number of such Hilbert schemes up to isomorphism by proving that the multi symmetric products associated to two distinct partitions of a positive integer $n$ are not isomorphic.
\end{abstract}
\maketitle
\textbf{Keywords :} Diagonal, ind-variety, divisor, Hilbert scheme, good partition, symmetric product.

\textbf{2020 Mathematics Subject Classification :} 14C05, 14C20, 14C22, 14H40.
\tableofcontents

\section{Introduction}
\label{sec:Introduction}
Let $X$ be a smooth projective variety over the field of complex numbers.  By the diagonal subscheme of $X$, denoted by $\Delta_X$, one means the image of the embedding $\delta: X\rightarrow X\times X$ given by $\delta(x)=(x,x)$, where $x\in X$.  This subscheme plays a central role in intersection theory.  In fact, to get hold of the fundamental classes of any subschemes of a variety $X$, it's enough to know the fundamental class of the diagonal $\Delta_X$ of $X$, (cf. \cite{P}).  
 
In this paper, we talk about the diagonal property and the weak point property of some varieties.  Broadly speaking, the diagonal property of a variety $X$ is a property which demands a special structure of the diagonal $\Delta_X$ and therefore very significant to study from the viewpoint of intersection theory.  Moreover, being directly related to the diagonal subscheme $\Delta_X$, this property imposes strong conditions on the variety $X$ itself.  For example, this property is responsible for the existence or non-existence of cohomologically trivial line bundles on $X$.  The weak point property is also very much similar to diagonal property but a much weaker one.  Both of these notions were introduced in \cite{PSP}.  Many mathematicians have studied about the diagonal property and the weak point property of varieties, (cf. \cite{F}, \cite{FP}, \cite{LS}).  In this paper, we introduce these two notions for an ind-variety, that is an inductive system of varieties and showed that the ind-varieties of higher rank divisors of integral slope on a smooth projective curve $C$  satisfy these properties.  Also, we show that some Hilbert schemes associated to good partitions of a constant polynomial satisfy the diagonal property.

Before mentioning the results obtained in this paper more specifically, let us fix some notations which we are going to use repeatedly.  We denote by $\mathbb{C}$ the field of complex numbers.  In this paper, by $C$ we always mean a smooth projective curve over $\mathbb{C}$. The notation $\mathcal{O}_C$ is reserved for the structure sheaf over $C$.  For a given divisor $D$ on $C$, by $\mathcal{O}_C(D)$ we mean the corresponding line bundle over $C$ and denote its degree by $\deg(D)$.  By $\Sym^d(C)$ and $J(C)$ we denote the $d$-th symmetric power of the curve $C$ and the Jacobian variety of degree $0$ line bundles on $C$ respectively.  For a given positive integer $n$ and a locally free sheaf (equivalently, a vector bundle) $\mathcal{F}$ over $C$, by $\mathcal{F}^n$ we mean the direct sum of $n$ many copies of $\mathcal{F}$.  By $\Quot^{d}_{\mathcal{G}}$ we denote the Quot scheme parametrizing all torsion quotients of $\mathcal{G}$ having degree $d$, $\mathcal{G}$ being any coherent sheaf on $C$.  For a given polynomial $P(t)\in \mathbb{Q}[t]$, we denote the Quot scheme parametrizing all torsion quotients of $\mathcal{G}$ having Hilbert polynomial $P(t)$ by $\Quot^{P}_{\mathcal{G}}$.
 
Let us now go through the chronology of this paper in a bit more detail.  The manuscript is arranged as follows.  In Section \ref{sec: dp and wpp}, we recall the definitions of the  diagonal property and the weak point property for a smooth projective variety and talk about a relation between these two properties.  Moreover, for a smooth projective curve $C$ over $\mathbb{C}$, we recall a couple of relevant results about the variety $\Sym^d(C)$ and the Quot scheme $\Quot^{d}_{\mathcal{O}_C^n}$.  In Section \ref{sec: Higher rank divisors}, we recall the definition of $(r,n)$-divisors on $C$ \& the ind-variety made out of such divisors.  We then precisely define, what we mean by the diagonal property and the weak point property of an ind-variety and prove the following theorems followed by that.
\begin{theorem}\label{Main_Theorem_1}
Let $C$ be a smooth projective curve over $\mathbb{C}$.  Also let $r\geq 1$ and $n$ be two integers.  Then the ind-variety of $(r,n)$-divisors having integral slope on $C$ has the weak point property. 
\end{theorem}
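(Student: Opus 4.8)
The plan is to reduce the weak point property of the ind-variety $\varinjlim_k V_k$ to a compatible family of weak point properties on its levels, where $V_k$ denotes the smooth projective variety parametrizing the $(r,n)$-divisors on $C$ of a fixed integral slope $k$. Unwinding the definition given in Section~\ref{sec: Higher rank divisors}, what must be produced is, for each $k$, a vector bundle $\mathcal{E}_k$ on $V_k$ of rank $\dim V_k$ together with a section $s_k$ whose scheme-theoretic zero locus is a single reduced point $x_k \in V_k$, and this data must be compatible with the transition embeddings $\iota_k \colon V_k \hookrightarrow V_{k+1}$, which raise the slope by one by adding a fixed $(r,n)$-divisor supported at a chosen base point $p_\infty \in C$. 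Here the integrality of the slope is exactly the condition that makes the $\iota_k$ well defined, so that $\varinjlim_k V_k$ genuinely is an ind-variety and the levels fit together; note also that, since the diagonal property is established later only in the $(1,n)$ case, the full generality $r \geq 1$ forces a direct construction rather than a deduction from the diagonal property.

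The core of the construction is a tautological-bundle argument modelled on the $\Sym^d(C)$ case recalled in Section~\ref{sec: dp and wpp}. I would take the distinguished point $x_k$ to be the $(r,n)$-divisor whose support is concentrated at the base point $p_\infty$, so that the compatibility $\iota_k(x_k) = x_{k+1}$ is automatic. Writing $\pi \colon C \times V_k \to V_k$ and $p_C \colon C \times V_k \to C$ for the two projections and $\mathcal{T}$ for the universal torsion quotient of $\mathcal{O}_{C \times V_k}^{\,n}$, the recipe is to fix the line bundle $L = \mathcal{O}_C(m\,p_\infty)$ with its canonical section $t$ vanishing to order $m$ at $p_\infty$, and to form the tautological sheaf $\pi_\ast(\mathcal{T} \otimes p_C^\ast L)$ on $V_k$ together with the section induced by $t$. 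As in the rank-one case, this section detects the condition that $t$ annihilate the quotient, i.e. that the divisor be supported on $\mathrm{div}(t) = m\,p_\infty$.

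The step I expect to be the main obstacle is that, for $n > 1$ (and for $r > 1$), a single such section is far from enough: cutting out the support condition leaves the entire punctual locus of quotients concentrated at $p_\infty$, together with the Grassmannian directions recording how $\mathcal{O}_C^{\,n}$ maps onto the quotient, and this locus is positive dimensional. To isolate the single point $x_k$ one must enlarge $\mathcal{E}_k$ so that its rank reaches $\dim V_k$ and so that its section also pins down these remaining directions --- for instance by combining the tautological sections attached to a carefully chosen collection of line bundles, or to a fixed frame of $\mathcal{O}_C^{\,n}$ twisted by distinct powers of $\mathcal{O}_C(p_\infty)$ in order to break the symmetry among the $n$ coordinates. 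The genuine content of the proof is the transversality computation showing that, for a suitable such choice, the resulting section has $x_k$ as its scheme-theoretic zero locus and that this zero scheme is reduced; the rest is bookkeeping.

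Finally I would check that the bundles $\mathcal{E}_k$ and sections $s_k$ are compatible under pullback along the embeddings $\iota_k$, so that they define admissible data on $\varinjlim_k V_k$ in the sense of Section~\ref{sec: Higher rank divisors}. This compatibility is what forces the uniform choices --- the single base point $p_\infty$ and the line bundles built from $\mathcal{O}_C(p_\infty)$ --- and is the place where the argument genuinely uses the ind-variety structure rather than a single fixed level, thereby completing the proof that the ind-variety of $(r,n)$-divisors of integral slope has the weak point property.
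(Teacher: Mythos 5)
Your proposal has a genuine gap, and it stems from two misreadings of the setup. First, the ind-variety in question is not indexed by the slope: by Definition \ref{Div_ind-variety}, Theorem \ref{T2} and Remark \ref{ind-variety of divisors}, its levels are indexed by effective divisors $D$ on $C$, the level attached to $D$ being $Q^{r,-n}(D)=\Quot^{r\deg(D)-n}_{\mathcal{O}_C^r}$, and the slope $n/r=k$ is the same for every level. Integrality of the slope is not what makes the transition maps well defined (the ind-variety ${\mathbf{Div}}^{r,n}$ exists for all $r$ and $n$); its actual role is to guarantee that the torsion degree $r\deg(D)-n=r(\deg(D)-k)$ at each level is a multiple of $r$, which is precisely the divisibility hypothesis of Theorem \ref{Quot scheme_wpp}. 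Second, Definition \ref{indvariety_dp} asks for no compatibility whatsoever between the bundle/section data at different levels: one only needs some index $D_0$ (here, any effective divisor of degree $d_0>k$) such that every level beyond it has the weak point property as a plain variety. The compatibility under the transition embeddings that you treat as the essential use of the ind-structure is a requirement you have invented, and imposing it makes the problem strictly harder than what is to be proved.

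The more serious issue is that, having set yourself this harder task, you do not complete it. The paper's proof is a short reduction: for all $D\geq D_0$ the level is $\Quot^{r(\deg(D)-k)}_{\mathcal{O}_C^r}$ with $r(\deg(D)-k)$ a positive multiple of $r$, so the recalled Biswas--Singh result (Theorem \ref{Quot scheme_wpp}) applies verbatim to give the weak point property of every such level, and Definition \ref{indvariety_dp} then yields the claim. You never invoke this theorem; instead you sketch a tautological-bundle construction that would amount to reproving it from scratch, and you explicitly leave its crux --- the choice of auxiliary twists and the transversality computation showing that the zero scheme is a single reduced point --- as ``the main obstacle'' and ``the genuine content of the proof.'' That unproven step is exactly the statement the paper quotes as a black box, so as written your argument establishes nothing beyond a reduction, and even that reduction is built on an incorrect description of the ind-variety and of where integrality of the slope enters.
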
 
\begin{theorem}\label{Main_Theorem_2}
Let $C$ be a smooth projective curve over $\mathbb{C}$ and $n$ any given integer. Then the ind-variety of $(1,n)$-divisors on $C$ has the diagonal property.
\end{theorem}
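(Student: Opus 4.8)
The plan is to establish the diagonal property one level at a time, for each Quot scheme $\Quot^d_{\mathcal{O}_C^n}$ parametrizing the $(1,n)$-divisors of degree $d$, and then to check that the bundles and sections so produced are compatible with the transition maps of the ind-variety, so that the property descends to the inductive limit in the sense of the definition recalled in Section~\ref{sec: Higher rank divisors}. Write $Q_d := \Quot^d_{\mathcal{O}_C^n}$; it is smooth projective of dimension $nd$ and, as recalled in Section~\ref{sec: dp and wpp}, carries a universal short exact sequence $0 \to \mathcal{S} \to \mathcal{O}_{Q_d\times C}^n \to \mathcal{T} \to 0$ on $Q_d\times C$, with $\mathcal{S}$ locally free of rank $n$ and $\mathcal{T}$ flat over $Q_d$ with torsion fibres of length $d$. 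Since the diagonal property requires a vector bundle of rank equal to $\dim Q_d = nd$ on $Q_d\times Q_d$ together with a section whose zero scheme is $\Delta_{Q_d}$, the first task is to produce such a pair, generalizing the construction available for $\Sym^d(C)$ (the case $n=1$).

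For the construction, let $p_1,p_2\colon Q_d\times Q_d\times C \to Q_d\times C$ be the projections forgetting the second, resp. first, factor, and let $q\colon Q_d\times Q_d\times C \to Q_d\times Q_d$ be the projection off the curve. Put $\mathcal{S}_1 := p_1^*\mathcal{S}$, $\mathcal{T}_1 := p_1^*\mathcal{T}$, $\mathcal{S}_2 := p_2^*\mathcal{S}$, $\mathcal{T}_2 := p_2^*\mathcal{T}$, and write $\phi_2\colon \mathcal{O}^n \to \mathcal{T}_2$ for the pulled-back universal surjection. I would then set
\[
E_d := q_*\,\mathcal{H}om(\mathcal{S}_1,\mathcal{T}_2) = q_*\bigl(\mathcal{S}_1^\vee\otimes\mathcal{T}_2\bigr).
\]
Because $\mathcal{S}_1$ is locally free of rank $n$ and $\mathcal{T}_2$ is flat with torsion fibres of length $d$, the sheaf $\mathcal{S}_1^\vee\otimes\mathcal{T}_2$ is flat over $Q_d\times Q_d$ with fibres torsion of length $nd$; in particular its $R^1q_*$ vanishes, so by cohomology and base change $E_d$ is a vector bundle of rank $nd$ whose formation commutes with base change. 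The composite $\mathcal{S}_1 \hookrightarrow \mathcal{O}^n \xrightarrow{\phi_2} \mathcal{T}_2$ is a $q$-relative global section of $\mathcal{H}om(\mathcal{S}_1,\mathcal{T}_2)$, hence determines a section $s_d \in H^0(Q_d\times Q_d, E_d)$.

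Next I would identify $Z(s_d)$. Over a point $(x,y)\in Q_d\times Q_d$ the fibre $E_d|_{(x,y)} = \operatorname{Hom}(\mathcal{S}|_x,\mathcal{T}|_y)$ and the value $s_d(x,y)$ is the composite $\mathcal{S}|_x \hookrightarrow \mathcal{O}_C^n \to \mathcal{O}_C^n/\mathcal{S}|_y$, which vanishes precisely when $\mathcal{S}|_x \subseteq \mathcal{S}|_y$ as subsheaves of $\mathcal{O}_C^n$; since both have colength $d$, this forces $\mathcal{S}|_x = \mathcal{S}|_y$, i.e. $x=y$. Thus $Z(s_d)_{\mathrm{red}} = \Delta_{Q_d}$. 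To upgrade this to an equality of schemes I would compute the derivative of $s_d$ along the diagonal. Recalling that $T_{[\phi]}Q_d \cong \operatorname{Hom}(\mathcal{S}_{[\phi]},\mathcal{T}_{[\phi]})$ (and that $\operatorname{Ext}^1(\mathcal{S},\mathcal{T})=0$ makes $Q_d$ smooth of dimension $nd$), a first-order deformation computation should show that, along $\Delta_{Q_d}$, the bundle map $T(Q_d\times Q_d)|_{\Delta}\to E_d|_{\Delta}$ induced by $ds_d$ is, up to a canonical isomorphism, the difference map $(u,v)\mapsto v-u$ from $\operatorname{Hom}(\mathcal{S},\mathcal{T})\oplus\operatorname{Hom}(\mathcal{S},\mathcal{T})$ onto $\operatorname{Hom}(\mathcal{S},\mathcal{T})$. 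This map is surjective with kernel exactly the tangent space to $\Delta_{Q_d}$, so $s_d$ is transverse to the zero section along the diagonal; hence $Z(s_d)$ is smooth of dimension $nd$ there, and being reduced of the same dimension as the smooth connected $\Delta_{Q_d}$ that it set-theoretically equals, it coincides with $\Delta_{Q_d}$ scheme-theoretically. This transversality computation is the step I expect to be the main obstacle, since it requires pinning down the universal deformation-theoretic description of $ds_d$ precisely enough to read off the difference map.

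Finally I would pass to the limit. The ind-structure is given by closed immersions $Q_d \hookrightarrow Q_{d+1}$ adding a fixed length-one quotient supported at a base point $p_0\in C$, and it remains to verify that the pairs $(E_d,s_d)$ are compatible with these maps as demanded by the definition of the diagonal property for an ind-variety from Section~\ref{sec: Higher rank divisors}: concretely, that under $(Q_d\times Q_d) \hookrightarrow (Q_{d+1}\times Q_{d+1})$ the bundle $E_{d+1}$ restricts to $E_d$ up to a controlled correction coming from the added point, with $s_{d+1}$ restricting to $s_d$ accordingly. Granting this compatibility, the family $\{(E_d,s_d)\}_d$ realizes the diagonal of the ind-variety of $(1,n)$-divisors as the zero scheme of a section of a bundle of the correct rank, which is exactly the assertion of Theorem~\ref{Main_Theorem_2}. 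Apart from the transversality step, every ingredient — local freeness of $\mathcal{S}$, the base-change computation for $E_d$, and the set-theoretic description of $Z(s_d)$ — is routine.
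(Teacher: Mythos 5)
Your proposal is built on a misreading of which Quot schemes constitute the ind-variety in question. In the notation ``$(1,n)$-divisor'', the $1$ is the \emph{rank} and $n$ is the \emph{degree}; by Remark \ref{ind-variety of divisors} and Definition \ref{Quot schemes as constituent of ind-variety}, the ind-variety of $(1,n)$-divisors is ${\mathbf{Q}}^{1,-n}$, whose constituents are $Q^{1,-n}(D)=\Quot^{\deg(D)-n}_{\mathcal{O}_C}$, i.e.\ torsion quotients of the rank-one sheaf $\mathcal{O}_C$ itself, and these are isomorphic to the symmetric products $\Sym^{\deg(D)-n}(C)$. Your spaces $Q_d=\Quot^d_{\mathcal{O}_C^n}$ (torsion quotients of $\mathcal{O}_C^n$, with $n$ playing the role of a rank) are the building blocks of a \emph{different} ind-variety, namely the one attached to rank-$n$ divisors, for which the paper claims only the weak point property (Theorem \ref{Quot scheme_wpp}). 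Once the spaces are identified correctly, the theorem is almost immediate, and that is exactly the paper's proof: choose $D_1$ with $\deg(D_1)>n$; then for every $D\geq D_1$ one has $Q^{1,-n}(D)\cong\Sym^{\deg(D)-n}(C)$ with $\deg(D)-n>0$, and symmetric products of a smooth curve have the diagonal property by Theorem \ref{symm prod_dp} (Biswas--Singh), so Definition \ref{indvariety_dp} applies.

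Two further points. First, even for the spaces you actually work with, your argument is incomplete at the place you yourself flag: the scheme-theoretic equality $Z(s_d)=\Delta_{Q_d}$ rests on a derivative/transversality formula that you only ``expect'' to hold; neither this paper nor the reference it uses (which proves only the weak point property for $\Quot^d_{\mathcal{O}_C^n}$, $n\geq 2$) supplies that step, so as written the proposal does not prove the diagonal property of the varieties it considers, let alone the theorem. Second, your final passage-to-the-limit step answers a requirement that the paper's definition does not impose: Definition \ref{indvariety_dp} asks only that there exist $\lambda_0$ such that every constituent $X_\lambda$ with $\lambda\geq\lambda_0$ has the diagonal property \emph{individually}; no compatibility of the pairs $(E_d,s_d)$ with the transition morphisms is required. (Relatedly, the transition maps here are twists by $\mathcal{O}_C(-D)$, under which the degree jumps by $r\cdot\deg(D)$, not closed immersions adding a single point of length one.) So the correct route is: identify the constituents as symmetric products, quote Theorem \ref{symm prod_dp}, and stop.
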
 
We end Section \ref{sec: Higher rank divisors} by showing some more properties of the ind-variety of $(1,n)$-divisors on $C$, as in \ref{Main_Theorem_2}, which are very much useful in the context of studying  Barth-Van de Ven-Tyurin-Sato theorem (cf. \cite{PT}).  We obtain the following theorem to be precise. 
\begin{theorem}\label{Main Theorem_3}
Let $C$ be a smooth projective curve over $\mathbb{C}$ and $n$ any given integer. Then the ind-variety of $(1,n)$-divisors on $C$ is a locally complete linear ind-variety.
\end{theorem} 
As an immediate consequence, we calculate the Picard variety of this ind-variety.
\begin{corollary}\label{Corollary regarding Picard group of the ind-variety}
Let $C$ be a smooth projective curve over $\mathbb{C}$ and $n$ any given integer. Then the Picard group of the ind-variety of $(1,n)$-divisors on $C$ is $\Pic(J(C))\oplus \mathbb{Z}$.
\end{corollary}         
In Section \ref{sec: Hilbert scheme}, we deal with the Hilbert scheme of a curve associated to a polynomial $P$ and its good partition.  E.~Bifet has dealt with these schemes in \cite{B}.  Moreover, he showed that the Quot scheme $\Quot_{\mathcal{O}_C^r}^P$ can be written as disjoint union of some smooth, the torus $\mathbb{G}_m^r$-invariant, locally closed vector bundles over the mentioned Hilbert schemes.  Here, we talk about the diagonal property of such Hilbert schemes and found the exact number of such schemes.  Towards that, we first prove the following lemma.
\begin{lemma}\label{lemma_1}
Let $n$ be a given positive integer.  Then any partition of $n$ is also a good partition of $n$ and vice versa.
\end{lemma}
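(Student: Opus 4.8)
The plan is to prove the two implications separately by unwinding the definition of a good partition and comparing it with the classical notion of a partition of $n$. Recall that a partition of $n$ is an expression $n=\lambda_1+\lambda_2+\cdots+\lambda_k$ with $\lambda_1\geq\lambda_2\geq\cdots\geq\lambda_k\geq 1$, while a good partition of the constant polynomial $n$, in the sense of \cite{B} recalled in Section~\ref{sec: Hilbert scheme}, is such a decomposition of $n$ into parts subject to the further requirement that each part be realizable as the Hilbert polynomial of a quotient sheaf entering the torus-fixed stratification of $\Quot^{n}_{\mathcal{O}_C^r}$. The implication ``good partition $\Rightarrow$ partition'' is then immediate: a good partition is in particular a decomposition of $n$ into positive integer parts, so forgetting the realizability data and reading off the underlying (weakly decreasing) sequence of parts produces a partition of $n$.

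The content lies in the converse, ``partition $\Rightarrow$ good partition,'' which I would treat next. Given an ordinary partition $n=\lambda_1+\cdots+\lambda_k$, I must check that the defining goodness condition holds for each part. The key observation is that for a constant Hilbert polynomial this condition is vacuous: on the smooth projective curve $C$, a torsion quotient of $\mathcal{O}_C$ of any prescribed length $\lambda_i\geq 1$ exists, for instance one supported at $\lambda_i$ chosen points of $C$, so that $\Sym^{\lambda_i}(C)\neq\emptyset$. Hence every part of the partition is automatically realizable, and therefore every partition of $n$ satisfies the goodness condition and is a good partition. Combining this with the previous paragraph yields both inclusions, and the two notions coincide.

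The main obstacle will be reconciling the purely combinatorial notion of a partition with the sheaf-theoretic constraint defining goodness, that is, showing that the restriction which ``good'' normally places on a partition of a general polynomial degenerates to no restriction at all when the polynomial is the constant $n$. Once this is established, using that every positive integer occurs as the length of a torsion quotient on $C$, the identification of the two sets is complete. I would then record this equivalence explicitly, since it is precisely what will allow us to index the associated Hilbert schemes by the ordinary partitions of $n$ and thereby count them up to isomorphism in the subsequent results.
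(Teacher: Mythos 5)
Your proposal is correct and follows essentially the same route as the paper: the direction ``partition $\Rightarrow$ good partition'' is proved by observing that $\Hilb^{\lambda_i}_C\cong\Sym^{\lambda_i}(C)\neq\emptyset$ for each positive part $\lambda_i$, and the converse by noting that non-emptiness of $\Hilb^{k}_C$ forces each part to be a positive integer. The paper's proof is just a more compact rendering of exactly these two observations.
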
 
In the Lemma \ref{lemma_1}, we interpret the integer $n$ as a constant polynomial and therefore it makes sense to talk about good partition of $n$.  We then deal with the products of projective spaces corresponding to distinct partitions of same length of a given integer $n$ and show the following:    

\begin{proposition}\label{prop_1}  
Let $n$ be a positive integer.  Let $(m_1,m_2,\ldots,m_s)$ and $(n_1,n_2,\ldots,n_s)$ be two distinct partitions of $n$ of same length $s$.  Then $\mathbb{P}^{m_1}\times 
\mathbb{P}^{m_2}\times \cdots \times \mathbb{P}^{m_s}$ is not isomorphic to  $\mathbb{P}^{n_1}\times \mathbb{P}^{n_2}\times \cdots \times \mathbb{P}^{n_s}$.
\end{proposition}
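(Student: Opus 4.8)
The plan is to extract from the isomorphism type of a product of projective spaces a numerical invariant that recovers the underlying partition, namely the top self-intersection form on its Picard group. First I would record that for a product $X = \mathbb{P}^{m_1}\times\cdots\times\mathbb{P}^{m_s}$ (all parts of a partition being $\geq 1$) one has $\Pic(X)\cong\mathbb{Z}^s$, freely generated by the pullbacks $h_1,\ldots,h_s$ of the hyperplane classes of the factors, so that $\mathcal{O}(a_1,\ldots,a_s)$ corresponds to $\sum_i a_i h_i$. Using that $\int_X h_1^{c_1}\cdots h_s^{c_s}=1$ precisely when $c_i=m_i$ for all $i$ (and is $0$ otherwise, among monomials of total degree $n=\dim X=\sum_i m_i$), I would compute the top self-intersection number
\[
\Big(\sum_i a_i h_i\Big)^{n}=\frac{n!}{m_1!\cdots m_s!}\,a_1^{m_1}\cdots a_s^{m_s}.
\]
Thus the degree-$n$ form $F_X(a_1,\ldots,a_s):=\deg\big(\mathcal{O}(a_1,\ldots,a_s)^{n}\big)$ is, up to a positive constant, the monomial $a_1^{m_1}\cdots a_s^{m_s}$, in which the coordinate $a_i$ appears with multiplicity exactly $m_i$.

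Next I would use that an isomorphism of smooth projective varieties $f\colon X\xrightarrow{\sim}Y$ induces a group isomorphism $f^{*}\colon\Pic(Y)\to\Pic(X)$ preserving top self-intersection numbers, since $f$ has degree one: $(f^{*}L)^{n}=L^{n}$ for every line bundle $L$. Writing $Y=\mathbb{P}^{n_1}\times\cdots\times\mathbb{P}^{n_s}$ and transporting $f^{*}$ to an element $\phi\in\mathrm{GL}_s(\mathbb{Z})$ under the identifications $\Pic(X)\cong\mathbb{Z}^s\cong\Pic(Y)$, this equality of intersection numbers reads $F_Y(b)=F_X(\phi(b))$ for all $b\in\mathbb{Z}^s$, hence $F_Y=F_X\circ\phi$ as elements of $\mathbb{C}[a_1,\ldots,a_s]$ (two polynomials agreeing on all of $\mathbb{Z}^s$ coincide). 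Since $\phi$ is an invertible linear substitution, $F_X\circ\phi=c\prod_{i}\ell_i^{m_i}$, where $\ell_1,\ldots,\ell_s$ are pairwise non-proportional linear forms — the images of the coordinate functions under $\phi$ — and $c>0$.

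Finally I would compare the two factorizations $c\,\ell_1^{m_1}\cdots\ell_s^{m_s}=F_Y=c'\,a_1^{n_1}\cdots a_s^{n_s}$ inside the unique factorization domain $\mathbb{C}[a_1,\ldots,a_s]$. Both sides are products of linear, hence irreducible, forms, so by uniqueness of factorization the multiset of multiplicities of the distinct irreducible factors must agree; that is, $\{m_1,\ldots,m_s\}=\{n_1,\ldots,n_s\}$ as multisets. This forces the two partitions to coincide, contradicting the hypothesis that they are distinct, and the proposition follows. I expect the main obstacle to be the bookkeeping of the middle step — verifying carefully that $f^{*}$ genuinely respects the intersection pairing and translates into a linear substitution relating $F_X$ and $F_Y$ — whereas the factorization argument is the conceptual heart, converting the combinatorial equality of partitions into uniqueness of factorization in a polynomial ring. (Equivalently, one could run the same recovery using the full graded cohomology ring $H^{*}(X;\mathbb{Z})$ or the Poincar\'e polynomial $\prod_i(1+t^2+\cdots+t^{2m_i})$, but the intersection-form formulation keeps the invariant most transparent.)
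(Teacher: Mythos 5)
Your proof is correct, and it takes a genuinely different route from the paper's. The paper computes the integral cohomology ring of each product via the K\"unneth formula, identifies the generators $x_i$ with the first Chern classes of the pullbacks $pr_{m_i}^{\ast}\mathcal{O}_{\mathbb{P}^{m_i}}(1)$, and then argues geometrically that any isomorphism of the two varieties must carry each such line bundle to some $pr_{n_j}^{\ast}\mathcal{O}_{\mathbb{P}^{n_j}}(1)$ because these classes are nef but not ample; this forces the induced map on cohomology to permute the generators, which is impossible once $m_i\neq n_i$ for some $i$. You instead work with the single numerical invariant $F_X(a_1,\ldots,a_s)=\bigl(\sum_i a_i h_i\bigr)^n=\frac{n!}{m_1!\cdots m_s!}a_1^{m_1}\cdots a_s^{m_s}$ on $\Pic(X)\cong\mathbb{Z}^s$, use that an isomorphism preserves top self-intersection numbers to get $F_Y=F_X\circ\phi$ for some $\phi\in\mathrm{GL}_s(\mathbb{Z})$, and recover the partition from the multiset of multiplicities of the irreducible (linear) factors via unique factorization in $\mathbb{C}[a_1,\ldots,a_s]$; since partitions are weakly decreasing tuples, multiset equality forces equality of the partitions. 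What your route buys is rigor precisely where the paper is thinnest: a nef-but-not-ample class on a multiprojective space need not be one of the $pr_{n_j}^{\ast}\mathcal{O}_{\mathbb{P}^{n_j}}(1)$ (for instance $\mathcal{O}(1,1,0)$ on a triple product is nef and not ample), so the paper's step really requires the finer statement that an isomorphism preserves the nef cone and hence permutes its extremal rays; your UFD argument needs no cone analysis at all, since it handles an arbitrary invertible integral substitution $\phi$ and lets unique factorization force the conclusion. What the paper's route buys is a more geometric picture (the isomorphism visibly permutes the hyperplane classes of the factors) using the same positivity vocabulary (nef, ample, Picard groups) that it reuses elsewhere, e.g.\ in the genus-$0$ case of its main counting theorem.
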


By a multi symmetric product of $C$ of type $[(n_1,n_2,\ldots,n_r),n]$ we mean the product $\Sym^{n_1}(C)\times \Sym^{n_2}(C)\times \cdots \times \Sym^{n_r}(C)$, $(n_1,n_2,\ldots,n_r)$ being a partition of $n$.  Then we look upon the multi symmetric products corresponding to partitions of different lengths and prove that they are not isomorphic by showing that their first Betti number differ.  Specifically, we obtain :
\begin{proposition}\label{prop_2}
Let $C$ be a smooth projective curve over $\mathbb{C}$ of genus $g$ with $g\geq 1$.  Let $n$ be a positive integer, and $(n_1,n_2,\ldots,n_r)$ and $(m_1,m_2,\ldots,m_s)$ two distinct partitions of $n$ of different lengths.  Then the multi symmetric product of $C$ of type $[(n_1,n_2,\ldots,n_r),n]$ and $[(m_1,m_2,\ldots,m_s),n]$ are not isomorphic.
\end{proposition}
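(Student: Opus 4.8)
The plan is to distinguish the two multi symmetric products by a topological invariant, namely the first Betti number, which I claim depends only on the \emph{length} of the partition and on the genus $g$. First I would recall the computation of the rational cohomology of the symmetric powers of a curve. Either from Macdonald's description of $H^{\ast}(\Sym^d(C))$, or more directly via the Abel--Jacobi morphism $\Sym^d(C)\to \mathrm{Jac}(C)$, one sees that for every $d\geq 1$ the pullback $H^1(\mathrm{Jac}(C),\mathbb{Q})\to H^1(\Sym^d(C),\mathbb{Q})$ is an isomorphism, so that $b_1(\Sym^d(C))=b_1(C)=2g$, \emph{independently} of $d$. This independence is the crux of the whole argument.

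Next I would compute the first Betti number of a product. Since each $\Sym^{n_i}(C)$ is a smooth projective, in particular connected, variety, the Künneth formula gives $H^1(X_1\times\cdots\times X_r,\mathbb{Q})\cong\bigoplus_{i=1}^{r} H^1(X_i,\mathbb{Q})$. Applying this with $X_i=\Sym^{n_i}(C)$, and using the previous step together with the fact that every $n_i\geq 1$ since the $n_i$ are parts of a partition, yields
\[
b_1\big(\Sym^{n_1}(C)\times\cdots\times\Sym^{n_r}(C)\big)=\sum_{i=1}^{r}2g=2gr.
\]
In exactly the same way the multi symmetric product of type $[(m_1,\ldots,m_s),n]$ has first Betti number $2gs$.

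Finally I would conclude. An isomorphism of varieties is in particular a homeomorphism of the underlying complex analytic spaces, hence preserves all Betti numbers. If the two multi symmetric products were isomorphic we would have $2gr=2gs$; since $g\geq 1$, this forces $r=s$, contradicting the hypothesis that the two partitions have different lengths. Therefore the two multi symmetric products are not isomorphic.

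The only genuinely substantive step is the first one, the assertion that $b_1(\Sym^d(C))=2g$ for all $d\geq 1$; everything afterwards is a formal application of Künneth together with the topological invariance of Betti numbers. It is also worth noting that the hypothesis $g\geq 1$ is precisely what makes this invariant nonzero and hence capable of detecting the length: for $g=0$ one has $\Sym^d(\mathbb{P}^1)\cong\mathbb{P}^d$ and $b_1$ vanishes identically, so the argument genuinely requires $g\geq 1$.
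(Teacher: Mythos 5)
Your proposal is correct and follows essentially the same route as the paper: both arguments distinguish the two multi symmetric products by showing that the first Betti number equals $2g\cdot(\text{length of the partition})$, using Macdonald's computation $b_1(\Sym^d(C))=2g$ for all $d\geq 1$, and then invoking $g\geq 1$ to conclude $r\neq s$ gives non-isomorphic varieties. The only cosmetic difference is that you apply the K\"unneth formula for $H^1$ directly, whereas the paper reaches the same count $b_1=2rg$ by an induction on the length using multiplicativity of Poincar\'e polynomials, which is the same fact in disguise.
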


Using Lemma \ref{lemma_1}, Proposition \ref{prop_1} and Proposition \ref{prop_2}, we obtain the following theorem :
\begin{theorem}\label{Main_Theorem_4}
Let $C$ be a smooth projective curve over $\mathbb{C}$ and $n$ a positive integer.  Let $p(n)$ denote the number of partitions of $n$.  Then the following hold:
\begin{enumerate}
\item There are at most $p(n)$ many Hilbert schemes $\Hilb^{\underline{n}}_C$ (up to isomorphism) associated to the constant polynomial $n$ and its good partitions $\underline{n}$ satisfying diagonal property.
\item Moreover, this upper bound is attained by any genus $0$ curve $C$ and hence is sharp.
\item Furthermore, for $n=1,2,3$, the upper bound is attained by any curve $C$.
\end{enumerate}
\end{theorem} 

We further look at the multi symmetric products corresponding to distinct partitions of same length of a given integer and check whether they are isomorphic or not.  In that context, we obtain: 
\begin{proposition}\label{prop_3}
Let $C$ be a smooth projective curve over $\mathbb{C}$ of genus $g$ with $g\geq 1$.  Let $n$ be a positive integer, and $(n_1,n_2,\ldots,n_r)$ and $(m_1,m_2,\ldots,m_r)$ two distinct partitions of $n$ of same length.  Then the multi symmetric product of $C$ of type $[(n_1,n_2,\ldots,n_r),n]$ and $[(m_1,m_2,\ldots,m_r),n]$ are not isomorphic.
\end{proposition}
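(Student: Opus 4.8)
The plan is to argue cohomologically, just as in the proof of Proposition \ref{prop_2}, but using the whole Poincaré polynomial instead of only the first Betti number. Since an isomorphism of smooth projective varieties is in particular a homeomorphism, it preserves every Betti number; hence it suffices to show that the two multi symmetric products have different Poincaré polynomials. Write $f_d(t)$ for the Poincaré polynomial of $\Sym^d(C)$. By the classical formula of Macdonald for the cohomology of symmetric products one has the generating identity
\begin{equation*}
\sum_{d\geq 0} f_d(t)\,q^d=\frac{(1+tq)^{2g}}{(1-q)(1-t^2q)},
\end{equation*}
and, by the Künneth formula, the Poincaré polynomial of the multi symmetric product of type $[(n_1,\ldots,n_r),n]$ equals $\prod_{i=1}^r f_{n_i}(t)$. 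So the proposition reduces to the combinatorial assertion that two distinct partitions $(n_1,\ldots,n_r)$ and $(m_1,\ldots,m_r)$ give $\prod_i f_{n_i}\neq\prod_i f_{m_i}$.

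Writing $a_d$ (respectively $b_d$) for the number of parts equal to $d$ in the first (respectively second) partition, this is the injectivity of $(a_d)_d\mapsto \prod_d f_d(t)^{a_d}$, which in turn follows once the power series $\log f_d(t)$, $d\geq 1$, are shown to be linearly independent over $\mathbb{Q}$: indeed $\prod_i f_{n_i}=\prod_i f_{m_i}$ reads $\sum_d (a_d-b_d)\log f_d=0$. I would therefore establish this linear independence. (It simultaneously reproves Proposition \ref{prop_2}; the equal-length hypothesis only serves to make the crude invariants coincide, so that the real content lies in the higher Betti numbers.)

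To prove the independence I would exploit the stabilisation of the Betti numbers $b_k(\Sym^d C)=[t^k]f_d(t)$. Extracting the coefficient of $t^k q^d$ from Macdonald's identity gives a closed expression showing that, for fixed $k$, the sequence $d\mapsto b_k(\Sym^d C)$ is nondecreasing and eventually constant, its \emph{last} jump occurring at an explicit position $p(k)$ depending only on $k$ and $g$, with a strictly positive increment there. Since $b_1(\Sym^d C)=2g$ for all $d\geq 1$, the coefficient of $t$ in $\sum_d (a_d-b_d)\log f_d=0$ already forces $\sum_d (a_d-b_d)=0$. One then argues by induction on $d_0$: assuming $a_d=b_d$ for $d<d_0$, pick the degree $k$ whose last jump sits exactly at $d_0+1$; since all lower Betti numbers have stabilised by $d_0$, the correction terms in $[t^k]\log f_d$ arising from $\log(1+u)=u-\tfrac12u^2+\cdots$ are constant for $d\geq d_0$, so—using $\sum_{d\geq d_0}(a_d-b_d)=0$—the $t^k$-coefficient equation collapses to a nonzero multiple of $a_{d_0}-b_{d_0}$, whence $a_{d_0}=b_{d_0}$. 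This closes the induction and gives the independence, hence the non-isomorphism.

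The main obstacle is precisely this last step. The low Betti numbers are useless on their own: for instance $b_2(\Sym^d C)=\binom{2g}{2}+1$ is already constant for $d\geq 2$, so it only detects parts equal to $1$. One must therefore show that the \emph{instability range} of $b_k$ grows with $k$ in a controlled, jump-detecting fashion, so that every part size $d_0$ is eventually separated. Carrying out the coefficient extraction from Macdonald's formula and verifying that the last jump of $d\mapsto b_k(\Sym^d C)$ is nonzero and lands at the required position $p(k)$—uniformly for $g\geq 1$, including the small-degree ``unstable'' symmetric products $\Sym^d C$ with $d<2g-1$, which are not projective bundles over the Jacobian—is the technical heart of the argument.
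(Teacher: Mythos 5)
Your strategy is sound, but it is genuinely different from the paper's, which never attempts a single cohomological argument. The paper proves the proposition by a case split on the size of the smallest part (Propositions \ref{non isomorphic multi symmetric products corresponding to partitions of same length having smaller parts} and \ref{non isomorphic multi symmetric products corresponding to partitions of same length having bigger parts}). When $\min\{n_r,m_r\}\leq 2g-1$, it discards common parts, orders the remaining ones so that $n_1<m_1$, and compares the single Betti number in degree $n_1+1$: by Macdonald's formula the discrepancy is $\binom{2g}{n_1+1}$, nonzero when $n_1\leq 2g-1$. When $\min\{n_r,m_r\}\geq 2g-1$ --- exactly the stable range where you rightly observe that low-degree Betti numbers stop seeing the parts --- the paper abandons cohomology altogether: each $\Sym^{d}(C)$ with $d\geq 2g-1$ is a projective bundle over $J(C)$ via the Abel--Jacobi map (Lemma \ref{using projective bundleness of symmetric products}), every morphism from a multiprojective space to an abelian variety is constant (Lemma \ref{constants map from multiprojective space to product of Jacobians}), so a putative isomorphism carries fibres into fibres, and Proposition \ref{non isomorphic multiprojective spaces} applied to the fibres gives the contradiction. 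The paper thus trades your combinatorial difficulty for geometric input.

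The technical heart you leave open does in fact go through, and it needs only the formula the paper already quotes (Proposition \ref{Betti numbers of symmetric product}). Writing $b_k(d)$ for the $k$-th Betti number of $\Sym^d(C)$, that formula (Poincar\'e duality being built into it) yields
\begin{equation*}
b_k(d)-b_k(d-1)=
\begin{cases}
\binom{2g}{2d-k}, & d\leq k\leq 2d,\\
0, & \text{otherwise},
\end{cases}
\end{equation*}
with the convention $\binom{2g}{m}=0$ for $m>2g$. Hence $d\mapsto b_k(d)$ is nondecreasing, its last jump sits at $p(k)=\min\bigl(k,\lfloor(k+2g)/2\rfloor\bigr)$, and the increment there is $\binom{2g}{k}$ (if $k\leq 2g$), $1$ (if $k>2g$ is even), or $2g$ (if $k>2g$ is odd) --- strictly positive in all cases since $g\geq 1$. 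Moreover every position $D\geq 1$ is realised: take $k=D$ if $D\leq 2g$ and $k=2D-2g$ if $D\geq 2g$; for this $k$ one checks $p(j)\leq D-1$ for all $j<k$, which is exactly the stabilisation of the lower Betti numbers that your induction step requires. So your induction closes, uniformly in $g\geq 1$. Note what the unified argument buys: the Poincar\'e polynomial alone determines the partition, so the equal-length hypothesis is superfluous (your degree-one coefficient forces equal lengths, subsuming Proposition \ref{prop_2}), and mixed partitions --- some parts below $2g-1$ and some above, possibly sharing small common parts --- are handled uniformly, which is precisely where the paper's two-case split is most delicate. The cost is the coefficient bookkeeping above, which the paper's geometric route avoids.
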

We prove Proposition \ref{prop_3} by breaking it down into two cases, namely $\min\{n_r,m_r\}\leq 2g-1$ and $\min\{n_r,m_r\}\geq 2g-1$.  For the first case, we prove using Betti numbers of the involved multi symmetric products (cf. Proposition \ref{non isomorphic multi symmetric products corresponding to partitions of same length having smaller parts}).  We use the projective bundle nature of symmetric products for the later case (cf. Proposition \ref{non isomorphic multi symmetric products corresponding to partitions of same length having bigger parts}).

Using Proposition \ref{prop_3}, we further strengthen Theorem \ref{Main_Theorem_4} to the maximum possible extent.  Precisely, we obtain :
\begin{theorem}\label{Main_Theorem_5}
Let $C$ be a smooth projective curve over $\mathbb{C}$ and $n$ a positive integer.  Let $p(n)$ denote the number of partitions of $n$.  Then there are exactly $p(n)$ many Hilbert schemes $\Hilb^{\underline{n}}_C$ (up to isomorphism) associated to the constant polynomial $n$ and its good partitions $\underline{n}$ satisfying diagonal property.
\end{theorem}

\section{On the diagonal property and the weak point property of a variety}\label{sec: dp and wpp} 
In this section, we recall the notions of the diagonal property and the weak point property of a variety and talk about relations between these two properties.  Moreover, for a smooth projective curve $C$ over $\mathbb{C}$, we recall a couple of relevant results about the variety $\Sym^d(C)$ and the Quot scheme $\Quot^{d}_{\mathcal{O}_C^n}$.   

Let us begin with the precise definitions of the diagonal property and the weak point property of a variety.
\begin{definition}\label{dp}
Let $X$ be a variety over the field of complex numbers.  Then $X$ is said to have the diagonal property if there exists a vector bundle $E\rightarrow X\times X$ of rank equal to the dimension of $X$, and a global section $s$ of $E$ such that the zero scheme $Z(s)$ of $s$ coincides with the diagonal $\Delta_X$ in $X\times X$.  
\end{definition} 
\begin{definition}\label{wpp}
Let $X$ be a variety over the field of complex numbers.  Then $X$ is said to have the weak point property if there exists a vector bundle $F\rightarrow X$ of rank equal to the dimension of $X$, and a global section $t$ of $F$ such that the zero scheme $Z(s)$ of $s$ is a reduced point of $X$.
\end{definition}
\begin{remark}\label{dp implies wpp}
It can be noted immediately that for a variety, having the weak point property is in fact a weaker condition than having the diagonal property.  To prove this precisely, let's stick to the notations of Definition \ref{dp} and \ref{wpp}.  Let us choose a point $x_0\in X$.  Then $Z(s|_{X\times \{x_0\}})=\{x_0\}$.  Therefore, the diagonal property implies the weak point property. 
\end{remark}
We now quickly go through some results related to the diagonal property and the weak point property of two varieties which arise very naturally from a given curve $C$.  To be specific, we look upon the varieties $\Sym^d(C)$ and $\Quot^{d}_{\mathcal{O}_C^n}$.  We mention a couple of results in this context.  These are due to \cite{BS}.
\begin{theorem}\label{symm prod_dp}
Let $C$ be a smooth projective curve over $\mathbb{C}$.  Then, the $d$-th symmetric product $\Sym^d(C)$ of the curve $C$ has the diagonal property for any positive integer $d$.
\end{theorem}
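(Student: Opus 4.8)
The plan is to produce directly the rank-$d$ vector bundle and the section demanded by Definition \ref{dp}, where $d=\dim \Sym^d(C)$. Write $S=\Sym^d(C)$, which parametrizes effective divisors of degree $d$ on $C$, and let $\mathcal{D}\subset C\times S$ be the universal divisor; it is finite and flat of degree $d$ over $S$, and the associated line bundle $\mathcal{O}_{C\times S}(\mathcal{D})$ carries a canonical section $\theta$ whose zero scheme is $\mathcal{D}$. Pulling $\mathcal{D}$ back along the two projections $S\times S\to S$ produces divisors $\mathcal{D}_1,\mathcal{D}_2\subset C\times S\times S$ with canonical sections $\theta_1,\theta_2$ of the corresponding line bundles.

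Let $\rho\colon C\times S\times S\to S\times S$ be the projection. First I would set
\[
E:=\rho_*\big(\mathcal{O}_{C\times S\times S}(\mathcal{D}_2)\big|_{\mathcal{D}_1}\big).
\]
Since $\mathcal{D}_1\to S\times S$ is finite flat of degree $d$ and $\mathcal{O}(\mathcal{D}_2)|_{\mathcal{D}_1}$ is a line bundle on $\mathcal{D}_1$, the sheaf $E$ is locally free of rank $d=\dim S$ on $S\times S$. Restricting $\theta_2$ to $\mathcal{D}_1$ and pushing forward yields a global section $s:=\rho_*\big(\theta_2|_{\mathcal{D}_1}\big)$ of $E$. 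The fiber of $E$ over $(D_1,D_2)$ is $H^0\big(D_1,\mathcal{O}_C(D_2)|_{D_1}\big)$, and $s(D_1,D_2)$ is the restriction to $D_1$ of the canonical section of $\mathcal{O}_C(D_2)$. Because the zero scheme of that canonical section is exactly $D_2$, the section $s$ vanishes at $(D_1,D_2)$ precisely when $D_1\le D_2$ as effective divisors; as $\deg(D_1)=\deg(D_2)=d$, this forces $D_1=D_2$. Hence $Z(s)_{\mathrm{red}}=\Delta_S$.

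The main obstacle is upgrading this set-theoretic statement to the scheme-theoretic equality $Z(s)=\Delta_S$, i.e. showing that $s$ vanishes transversally along the diagonal. Here I would exploit the fortunate coincidence that, over $\Delta_S$, one has $\mathcal{O}_C(D_2)|_{D_1}=\mathcal{O}_D(D)$, so that
\[
E|_{\Delta_S}\;\cong\;T_S,
\]
using the standard identification $T_D\Sym^d(C)=H^0\big(D,\mathcal{O}_D(D)\big)$; moreover $N_{\Delta_S/(S\times S)}\cong T_S$ as well. The crux is then to compute the differential of $s$ in the normal direction and to check that, under these two canonical isomorphisms, it becomes the identity of $T_S$. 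This shows $ds$ induces an isomorphism $N_{\Delta_S/(S\times S)}\to E|_{\Delta_S}$, so $s$ meets the zero section transversally along $\Delta_S$. Since $Z(s)$ is cut out in the smooth $2d$-dimensional variety $S\times S$ by the $d$ components of $s$, transversality forces $Z(s)$ to be smooth of dimension $d$ and reduced along $\Delta_S$, whence $Z(s)=\Delta_S$ exactly. With this, $E$ together with $s$ witnesses the diagonal property of $\Sym^d(C)$, and I expect the normal-derivative computation to be the only delicate point.
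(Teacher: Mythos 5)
Your construction is, in substance, the one in the source this paper relies on: for this theorem the paper gives no argument of its own, only the citation to Biswas--Singh \cite{BS}, and the pair $(E,s)$ you build --- the pushforward to $\Sym^d(C)\times \Sym^d(C)$ of $\mathcal{O}(\mathcal{D}_2)|_{\mathcal{D}_1}$, together with the section induced by the canonical section $\theta_2$ --- is precisely the witness used there. Your rank computation (pushforward of a line bundle along the finite flat degree-$d$ map $\mathcal{D}_1\to \Sym^d(C)\times\Sym^d(C)$) and the set-theoretic identification $Z(s)_{\mathrm{red}}=\Delta$ (containment of effective divisors of equal degree forces equality) are both correct.

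As written, however, the proof is incomplete at exactly the step you yourself call the crux: the scheme-theoretic equality $Z(s)=\Delta$, which is what Definition \ref{dp} actually demands, is made to rest on the claim that the normal derivative of $s$ along $\Delta$ becomes the identity under the identifications $E|_{\Delta}\cong T_S$ and $N_{\Delta/(S\times S)}\cong T_S$, and that computation is announced but never performed; it is genuinely delicate at non-reduced divisors $D$, where even the identification $T_D\Sym^d(C)\cong H^0(D,\mathcal{O}_D(D))$ requires care. There is a cleaner finish that avoids transversality altogether: argue on $T$-valued points. A morphism $T\to S\times S$ is a pair $(\mathcal{D}_1,\mathcal{D}_2)$ of closed subschemes of $C\times T$, each finite and flat of degree $d$ over $T$; since pushforward along a finite flat morphism commutes with base change and is faithful on sections, the pulled-back section vanishes identically if and only if $\theta_2$ restricts to zero on $\mathcal{D}_1$, i.e.\ if and only if $\mathcal{D}_1\subseteq \mathcal{D}_2$ as closed subschemes of $C\times T$. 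Such an inclusion gives a surjection $\mathcal{O}_{\mathcal{D}_2}\twoheadrightarrow \mathcal{O}_{\mathcal{D}_1}$ of locally free $\mathcal{O}_T$-modules of the same rank $d$; its kernel is then locally a direct summand of rank $0$, hence zero, so $\mathcal{D}_1=\mathcal{D}_2$. Therefore $Z(s)$ and $\Delta$ have the same $T$-points for every $T$, i.e.\ $Z(s)=\Delta$ as schemes. Replacing your transversality paragraph by this functorial argument closes the gap and completes the proof.
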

\begin{proof}
See \cite[Theorem 3.1, p. 447]{BS}.
\end{proof}
\begin{theorem}\label{Quot scheme_wpp}
Let $C$ be a smooth projective curve over $\mathbb{C}$. Let $d$ and $n$ be two given positive integer such that $n|d$.  Then the Quot scheme $\Quot^{d}_{\mathcal{O}_C^n}$ parametrizing the torsion quotients of $\mathcal{O}_C^n$ of degree $d$ has the weak point property.    
\end{theorem}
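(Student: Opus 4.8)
The plan is to write $Q:=\Quot^{d}_{\mathcal{O}_C^n}$ and to exhibit, for a carefully chosen fixed quotient, a vector bundle of rank $\dim Q$ on $Q$ together with a global section whose zero scheme is a single reduced point, as demanded by Definition \ref{wpp}. First I would record that $Q$ is smooth projective of dimension $nd$: at the point corresponding to a quotient $q:\mathcal{O}_C^n\twoheadrightarrow T$ with kernel $S$, the Zariski tangent space is $\Hom(S,T)=H^0(C,\mathcal{H}om(S,T))$, and since $T$ is torsion of degree $d$ and $S$ is locally free of rank $n$, the sheaf $\mathcal{H}om(S,T)\cong S^\vee\otimes T$ is torsion of length $nd$; hence $\dim Q=nd$. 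So the bundle we seek must have rank $nd$.

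Using the hypothesis $n\mid d$, I fix a convenient basepoint. Let $D$ be an effective divisor on $C$ of degree $d/n$ and set $T_0=(\mathcal{O}_C/\mathcal{O}_C(-D))^{\oplus n}$, a torsion sheaf of degree $d$, with its natural surjection $q_0:\mathcal{O}_C^n\twoheadrightarrow T_0$ and kernel $S_0=\mathcal{O}_C(-D)^{\oplus n}$; write $[q_0]\in Q$ for the corresponding point. On $C\times Q$ consider the universal exact sequence $0\to\mathcal{S}\to\mathcal{O}_{C\times Q}^n\to\mathcal{T}\to0$, where $\mathcal{S}$ is the universal subsheaf (it is $Q$-flat with locally free fibres over the smooth $C\times Q$, hence a vector bundle of rank $n$), and let $\pi:C\times Q\to Q$ and $\pi_C:C\times Q\to C$ be the projections. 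I define the candidate bundle to be
\[
F:=\pi_*\,\mathcal{H}om(\mathcal{S},\pi_C^*T_0)\cong\pi_*\big(\mathcal{S}^\vee\otimes\pi_C^*T_0\big).
\]
Since $\pi_C^*T_0$ is flat and finite over $Q$ of fibrewise length $d$ and $\mathcal{S}^\vee$ is locally free of rank $n$, the sheaf $\mathcal{S}^\vee\otimes\pi_C^*T_0$ is finite and flat over $Q$ of relative length $nd$; by cohomology and base change $F$ is locally free of rank $nd$, exactly the rank $\dim Q$ required.

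The section is the tautological one: composing $\mathcal{S}\hookrightarrow\mathcal{O}_{C\times Q}^n\xrightarrow{\;q_0\;}\pi_C^*T_0$ gives a global homomorphism $\mathcal{S}\to\pi_C^*T_0$, that is, a global section $s$ of $F$. At a point $[q:\mathcal{O}_C^n\twoheadrightarrow T]$ with kernel $S$, the value $s([q])$ is the class of the composite $S\hookrightarrow\mathcal{O}_C^n\xrightarrow{q_0}T_0$, so $s([q])=0$ if and only if $S\subseteq\ker q_0=S_0$. As $S$ and $S_0$ both have colength $d$ in $\mathcal{O}_C^n$, the inclusion $S\subseteq S_0$ forces $S=S_0$; hence $Z(s)=\{[q_0]\}$ set-theoretically.

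It remains to check that $Z(s)$ is a \emph{reduced} point, i.e. that $s$ meets the zero section transversally at $[q_0]$, equivalently that the derivative $ds_{[q_0]}\colon T_{[q_0]}Q\to F_{[q_0]}$ is an isomorphism; I expect this transversality computation to be the crux of the argument. Both source and target are canonically $\Hom(S_0,T_0)$ (the tangent space by the deformation theory of the Quot scheme, and the fibre by base change, using that $T_0$ is torsion), so it suffices to identify $ds_{[q_0]}$ with the identity. A first-order deformation of $S_0$ inside $\mathcal{O}_C^n$ is given by $\psi\in\Hom(S_0,\mathcal{O}_C^n/S_0)=\Hom(S_0,T_0)$; evaluating $q_0$ on the deformed subsheaf $\{\,s+\epsilon\tilde{s}\,\}$, where $\tilde{s}$ lifts $\psi(s)$, gives $q_0(s+\epsilon\tilde{s})=\epsilon\,\psi(s)$, because $q_0(s)=0$ and $q_0$ induces the identification $\mathcal{O}_C^n/S_0\cong T_0$. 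Thus $ds_{[q_0]}(\psi)=\psi$, the derivative is the identity, and $Z(s)$ is a reduced point. This produces the data of Definition \ref{wpp}, so $Q=\Quot^{d}_{\mathcal{O}_C^n}$ has the weak point property; the only delicate inputs are the local freeness of $F$ (handled by base change) and the transversality just sketched.
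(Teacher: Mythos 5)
Your proof is correct, but it does not coincide with the paper's treatment, because the paper does not actually prove this statement: Theorem \ref{Quot scheme_wpp} is quoted from Biswas--Singh \cite{BS}, and the paper only recalls the strategy of that proof in Remark \ref{positivity required in the hypothesis}. Measured against that strategy, your argument is genuinely different --- in fact dual to it. Biswas--Singh first reduce to proving the weak point property of $\Quot^{d}_{L^n}$ for $L=\mathcal{O}_C(rx_0)$ with $d=nr$, and their distinguished datum is the \emph{fixed subsheaf} $\mathcal{O}_C^n\hookrightarrow L^n$: the tautological section there composes this fixed inclusion with the \emph{universal quotient}, and vanishes exactly where the moving kernel contains the fixed subsheaf. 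You work directly on $\Quot^{d}_{\mathcal{O}_C^n}$, fix instead the \emph{quotient} $q_0:\mathcal{O}_C^n\twoheadrightarrow T_0$, and compose the \emph{universal subsheaf} $\mathcal{S}$ with $q_0$, so your section vanishes exactly where the moving kernel is contained in the fixed one; in both cases a colength count forces the zero locus to be a single point. Your handling of the technical points is sound: local freeness of $F=\pi_*(\mathcal{S}^\vee\otimes\pi_C^*T_0)$ follows from finite flat pushforward and base change as you say, and reducedness follows from your computation that $ds_{[q_0]}$ is the identity of $\Hom(S_0,T_0)$. Two remarks. First, smoothness of $Q$ needs, besides the tangent-space count, vanishing of obstructions, i.e.\ $\mathrm{Ext}^1(S,T)=H^1(C,S^\vee\otimes T)=0$; this is immediate because $S^\vee\otimes T$ is torsion, so this is cosmetic rather than a genuine gap. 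Second, and this is a feature of your route: you use the hypothesis $n\mid d$ only to manufacture the particular base point $[q_0]$ from a divisor of degree $d/n$, but any point of $Q$ would serve equally well (any kernel $S_0$ of colength $d$ pins down $Z(s)$ by the same colength argument, and the derivative computation is unchanged), so your argument in fact establishes the weak point property of $\Quot^{d}_{\mathcal{O}_C^n}$ for \emph{all} positive integers $d$ and $n$, a statement stronger than the one quoted from \cite{BS}.
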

\begin{proof}
See \cite[Theorem 2.2, p. 446-447]{BS}.
\end{proof}  
\begin{remark}\label{positivity required in the hypothesis}
Let us discuss about the hypothesis of Theorem \ref{Quot scheme_wpp}.  Firstly, positivity of the integer $n$ is necessary as we are talking about the sheaf $\mathcal{O}_C^n$.  Moreover, if we assume that $d$ is a positive integer and $n|d$, then there exists a positive integer $r$ such that $d=nr$.  The positivity of this integer $r$ is heavily used in the proof.  Indeed, the authors first showed that to prove Theorem \ref{Quot scheme_wpp}, it is enough to show that the Quot scheme $\Quot^{d}_{L^n}$ has the weak point property for some degree $r$ line bundle $L$ over $C$.  Now the line bundle $L$ is taken to be the line bundle $\mathcal{O}_C(rx_0)$, where $x_0\in X$.  Now positivity of $r$ gives the natural inclusion $i:\mathcal{O}_C \hookrightarrow \mathcal{O}_C(rx_0)$.  This in turn gives the following short exact sequence:
\begin{equation}\label{ses_wpp of Quot scheme}
0\rightarrow \mathcal{O}_C^n \rightarrow \mathcal{O}_C(rx_0)^n \rightarrow T \rightarrow 0.
\end{equation}   
Now the torsion sheaf $T$ as in \eqref{ses_wpp of Quot scheme} lies in the sheaf $\Quot^{d}_{{\mathcal{O}_C(rx_0)}^n}$, the sheaf they wanted to work on to prove the required result.  So, positivity of $d$ has a huge role to play in the proof. 
\end{remark}
\begin{remark}
It is worthwhile to note a connection between Theorem \ref{symm prod_dp} \& \ref{Quot scheme_wpp}.  If we take, $n=1$, then Theorem \ref{Quot scheme_wpp} says that for any positive integer $d$, the Quot scheme $\Quot^{d}_{\mathcal{O}_C}$ has the weak point property.  As, $\Sym^d(C)\simeq \Quot^{d}_{\mathcal{O}_C}$, by Remark \ref{dp implies wpp}, Theorem \ref{Quot scheme_wpp} follows from Theorem \ref{symm prod_dp} for $n=1$ case.
\end{remark}
\section{Higher rank divisors on a curve, corresponding ind-varieties and the diagonal \& the weak point property}
\label{sec: Higher rank divisors}
In this section, we recall the definition of higher rank divisors on a curve, corresponding ind-varieties and quasi-isomorphism between them.  Then we introduce the notion of the diagonal property and the weak point property for an ind-variety in general and prove some results about the ind-varieties of higher rank divisor in particular.   
 
Let us denote by $K$ the field of rational functions on $C$, thought as a constant $\mathcal{O}_C$-module.
\begin{definition}
	A divisor of rank $r$ and degree $n$ over $C$ is a coherent sub $\mathcal{O}_C$-module of $K^{\oplus r}$ having rank $r$ and degree $n$.  This is denoted by $(r,n)$-divisor.
\end{definition}
\begin{remark}
Since we take $C$ to be smooth, these $(r,n)$-divisors coincide with the matrix divisors defined by A. Weil, (cf. \cite{W}).
\end{remark}
Let us denote the set of all $(r,n)$-divisors on $C$ by $\Div^{r,n}$.  Let $D$ be an effective divisor of degree $d$ over $C$.  Then corresponding to $D$, let us define the following subset of $\Div^{r,n}$, denoted by $\Div^{r,n}(D)$ as follows:
\begin{definition}
	$\Div^{r,n}(D):=\{E \in \Div^{r,n} \mid E \subseteq \mathcal{O}_{C}(D)^{\oplus r}\}$.
\end{definition}
Then clearly we have, $\Div^{r,n}=\bigcup_{D\geq 0}\Div^{r,n}(D)$.
Also, the elements of $\Div^{r,n}(D)$ can be identified with the rational points of the Quot scheme $\Quot_{\mathcal{O}_C(D)^r}^m$, where $m=r\cdot \deg (D)-n$.  Therefore taking $D=\mathcal{O}_C$, we can say that the elements of $\Div^{r,n}(\mathcal{O}_C)$ can be identified with the rational points of the Quot scheme $\Quot_{\mathcal{O}_C^r}^{-n}$.

Let us now recall what one means by a inductive system of varieties.
\begin{definition}
	An ind-variety $\mathbf{X}=\{X_\lambda,f_{\lambda \mu}\}_{\lambda,\mu \in \Lambda}$ is an inductive system of complex algebraic varieties $X_\lambda$ indexed by some filtered ordered set $\Lambda$.  That is to say, an ind-variety is a collection $\{X_\lambda\}_{\lambda \in \Lambda}$ of complex algebraic varieties, where $\Lambda$ is some filtered ordered set, along with the morphisms $f_{\lambda \mu}: X_{\lambda}\rightarrow X_{\mu}$ of varieties for every $\lambda \leq \mu$ such that the following diagrams commute for every $\lambda \leq \mu \leq \nu$.
\begin{equation*}
\xymatrix{
X_{\lambda} \ar[rd]_{f_{\lambda \nu}}\ar[r]^{f_{\lambda \mu}} & X_{\mu}
		\ar[d]^{f_{\mu \nu}}\\ 
		 & X_{\nu}}
\end{equation*}	
\end{definition}
Taking the indexing set $\Lambda$ to be the set of effective divisors on $C$, we have the inclusion 
\begin{equation}\label{E6}
\Div^{r,n}(D_\alpha)\rightarrow \Div^{r,n}(D_\beta),
\end{equation}
induced by the closed immersion $\mathcal{O}_{C}(D_{\alpha})^{\oplus r}\hookrightarrow \mathcal{O}_{C}(D_{\beta})^{\oplus r}$ for any pair of effective divisors $D_\alpha,D_\beta$ satisfying $D_\alpha\leq D_\beta$.
\begin{definition}\label{Div_ind-variety}
	The ind-variety determined by the inductive system consisting of the varieties $\Div^{r,n}(D)$ and the closed immersions as in (\ref{E6}) is denoted by ${\mathbf{Div}}^{r,n}$.
\end{definition}

Now we are going to consider another ind-variety.  Given any effective divisor $D$ on $C$, we consider a complex algebraic variety $Q^{r,n}(D)$ defined as follows.
\begin{definition}\label{Quot schemes as constituent of ind-variety}
	$Q^{r,n}(D):=\Quot^{n+r\cdot \deg (D)}_{\mathcal{O}_C^r}$.
\end{definition}
Let $D_1$ and $D_2$ be any two effective divisors  with $D_2\geq D_1$.   Denoting $D_2-D_1$ as $D$, we have the following structure map denoted by $\mathcal{O}_C(-D)$.
\begin{equation*}
\mathcal{O}_C(-D): \Quot^{n+r\cdot \deg (D_1)}_{\mathcal{O}_C^r}\rightarrow \Quot^{n+r\cdot \deg (D_2)}_{\mathcal{O}_C^r},
\end{equation*}
where the map $\mathcal{O}_C(-D)$ means tensoring the submodules with $\mathcal{O}_C(-D)$.
Elaborately, let $(\mathcal{F},q) \in \Quot^{n+r\cdot \deg (D_1)}_{\mathcal{O}_C^r}$.  Therefore we have the following exact sequence:
\begin{equation*}
\xymatrix{ 0 \ar[r]& \Ker (q) \ar[r] &\mathcal{O}_C^r\ar[r]^{q} &\mathcal{F}
	\ar[r] &0},
\end{equation*}
where degree of $\mathcal{F}$ is $n+r\cdot \deg (D_1)$ and hence degree of $\Ker (q)$ is $-n-r\cdot \deg (D_1)$.  Tensoring this by $\mathcal{O}_C(-D)$ we get,
\begin{equation*}
\xymatrix{ 0 \ar[r]& \Ker (q)\otimes \mathcal{O}_C(-D) \ar[r] &\mathcal{O}_C(-D)^r\ar[r] &\mathcal{F}\otimes \mathcal{O}_C(-D)
	\ar[r] &0}.
\end{equation*}
Here $\deg(\Ker (q)\otimes \mathcal{O}_C(-D))=r\cdot(\deg (D_1)-\deg (D_2))-n-r\cdot \deg (D_1)=-n-r\cdot \deg (D_2)$.  Now as $\mathcal{O}_C(-D)^r$ sits inside $\mathcal{O}_C^r$, $\Ker (q)\otimes \mathcal{O}_C(-D)$ also sits inside $\mathcal{O}_C^r$.  Therefore we now get the following exact sequence:
\begin{equation*}
\xymatrix{ 0 \ar[r]& \Ker (q)\otimes \mathcal{O}_C(-D) \ar[r] &\mathcal{O}_C^r\ar[r]^{q_1} &\mathcal{F}_1
	\ar[r] &0},
\end{equation*} 
where $\deg(\mathcal{F}_1)=n+r\cdot \deg (D_2)$. Hence, $\mathcal{F}_1 \in \Quot^{n+r\cdot \deg (D_2)}_{\mathcal{O}_C^r}$.  Thus, the map $\mathcal{O}_C(-D): \Quot^{n+r\cdot \deg (D_1)}_{\mathcal{O}_C^r} \rightarrow \Quot^{n+r\cdot \deg (D_2)}_{\mathcal{O}_C^r}$ given by $(\mathcal{F},q) \mapsto (\mathcal{F}_1,q_1)$ is well defined.  Therefore for $D_2\geq D_1$ we have,
\begin{equation}\label{E7}
\mathcal{O}_C(-D):Q^{r,n}(D_1)\rightarrow Q^{r,n}(D_2).
\end{equation}
\begin{definition}\label{Quot_ind-variety}
	The ind-variety determined by the inductive system consisting of the varieties $Q^{r,n}(D)$ and the morphisms as in (\ref{E7}) is denoted by ${\mathbf{Q}}^{r,n}$.
\end{definition}

Let us clarify what we mean by a good enough morphism in the category of ind-varieties.   
\begin{definition}
	Let $\mathbf{X}=\{X_D,f_{DD_1}\}_{D,D_1\in \mathcal{D}}$ and $\mathbf{Y}=\{Y_D,g_{DD_1}\}_{D,D_1\in \mathcal{D}}$ be two inductive system of complex algebraic varieties, where $\mathcal{D}$ is the ordered set of all effective divisors on $C$.  Then by a morphism $\mathbf{\Phi}=\{\alpha,\{\phi_D\}_{D\in \mathcal{D}}\}$ from $\mathbf{X}$ to $\mathbf{Y}$ we mean an order preserving map $\alpha:\mathcal{D}\rightarrow \mathcal{D}$ together with a family of morphisms $ \phi_D: X_D \rightarrow Y_{\alpha(D)}$ satisfying the following commutative diagrams for all $D,D_1\in \mathcal{D}$ with $D\leq D_1$.
	\begin{equation*}
	\label{eq:277}
	\xymatrix{X_D \ar[d]_{f_{DD_1}}\ar[rr]^{\phi_D} && Y_{\alpha(D)}
		\ar[d]^{g_{\alpha(D)\alpha(D_1)}}\\ 
		X_{D_1}\ar[rr]^{\phi_{D_1}}
		&& Y_{\alpha(D_1)}
	}
	\end{equation*}	
\end{definition}
\begin{remark}
Note that $\alpha:\mathcal{D}\rightarrow \mathcal{D}$ being an order preserving map, $D\leq D_1\Rightarrow \alpha(D)\leq \alpha(D_1)$.  Therefore the map $g_{\alpha(D)\alpha(D_1)}:Y_{\alpha(D)}\rightarrow Y_{\alpha(D_1)}$ makes sense.
\end{remark}
\begin{definition}
	Let $\mathbf{X}=\{X_D,f_{DD_1}\}_{D,D_1\in \mathcal{D}}$ and $\mathbf{Y}=\{Y_D,g_{DD_1}\}_{D,D_1\in \mathcal{D}}$ be two inductive system of complex algebraic varieties.  Then a morphism $\mathbf{\Phi}=\{\alpha,\{\phi_D\}_{D\in \mathcal{D}}\}$ from $\mathbf{X}$ to $\mathbf{Y}$ is said to be a quasi-isomorphism if
	\begin{enumerate}[(a)]
		\item $\alpha(\mathcal{D})$ is a cofinal subset of $\mathcal{D}$,
		\item given any integer $n$ there exists $D_n\in \mathcal{D}$ such that for all $D\geq D_n$, $ \phi_D: X_D \rightarrow Y_{\alpha(D)}$ is an open immersion and codimension of $Y_{\alpha(D)}-\phi_D(X_D)$ in $Y_{\alpha(D)}$ is greater than $n$, i.e for $D\gg 0$ the maps $\phi_D: X_D \rightarrow Y_{\alpha(D)}$ are open immersion and very close to being surjective.
	\end{enumerate}
\end{definition}

Now we recall an important theorem which talks about the quasi-isomorphism between the ind-varieties defined in Definition \ref{Div_ind-variety} and \ref{Quot_ind-variety}.

\begin{theorem}\label{T2}
	There is a natural quasi-isomorphism between the ind-varieties ${\mathbf{Div}}^{r,n}$ and ${\mathbf{Q}}^{r,-n}$.
\end{theorem}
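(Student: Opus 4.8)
The plan is to produce the quasi-isomorphism with the index map $\alpha$ equal to the identity on $\mathcal{D}$ and with level morphisms
\[
\phi_D \colon \Div^{r,n}(D) \longrightarrow Q^{r,-n}(D)
\]
given by tensoring a divisor with the line bundle $\mathcal{O}_C(-D)$, and then to check that each $\phi_D$ is in fact an isomorphism, so that the two conditions in the definition of a quasi-isomorphism reduce to trivialities.

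First I would make precise the scheme-theoretic identification that is only sketched before the statement. Working with functors of points, a family of $(r,n)$-divisors inside $\mathcal{O}_C(D)^{\oplus r}$ over a base $S$ is the same datum as an $S$-flat torsion quotient of $\mathcal{O}_C(D)^{\oplus r}$ of degree $r\cdot\deg(D)-n$, the divisor being recovered as the kernel; this identifies $\Div^{r,n}(D)$ with $\Quot^{\,r\cdot\deg(D)-n}_{\mathcal{O}_C(D)^{\oplus r}}$ as schemes. Tensoring by $\mathcal{O}_C(-D)$ carries a torsion quotient of $\mathcal{O}_C(D)^{\oplus r}$ to a torsion quotient of $\mathcal{O}_C^{\oplus r}$; since tensoring by a line bundle is exact, preserves $S$-flatness, and does not change the degree (equivalently the length) of a torsion sheaf on the curve, it yields a natural isomorphism $\Quot^{\,r\cdot\deg(D)-n}_{\mathcal{O}_C(D)^{\oplus r}} \cong \Quot^{\,r\cdot\deg(D)-n}_{\mathcal{O}_C^{\oplus r}}$. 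The key bookkeeping point is that $r\cdot\deg(D)-n = -n + r\cdot\deg(D)$, which is exactly the degree appearing in $Q^{r,-n}(D)=\Quot^{\,-n+r\cdot\deg(D)}_{\mathcal{O}_C^{\oplus r}}$; this is what forces the sign $-n$ in the target and lets us take $\alpha=\mathrm{id}$. Composing the two identifications defines $\phi_D$ and exhibits it as an isomorphism.

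Next I would verify that $\Phi=\{\mathrm{id},\{\phi_D\}\}$ is a morphism of ind-varieties, i.e. that for effective divisors $D_1\le D_2$ with $D=D_2-D_1$ the square relating the closed immersion $\Div^{r,n}(D_1)\hookrightarrow \Div^{r,n}(D_2)$ and the structure map $\mathcal{O}_C(-D)\colon Q^{r,-n}(D_1)\to Q^{r,-n}(D_2)$ commutes. Both composites send a divisor $E\subseteq\mathcal{O}_C(D_1)^{\oplus r}$ to $E\otimes\mathcal{O}_C(-D_2)$: along one route by first including $E$ into $\mathcal{O}_C(D_2)^{\oplus r}$ and then twisting by $\mathcal{O}_C(-D_2)$, along the other by twisting by $\mathcal{O}_C(-D_1)$ and then by $\mathcal{O}_C(-D)$, and these agree by associativity of the tensor product; the same computation holds in families. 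In particular the transition maps of the two systems are carried to one another, so $\Phi$ is even an isomorphism of inductive systems.

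Finally I would check the two defining conditions of a quasi-isomorphism. Condition (a) is immediate because $\alpha=\mathrm{id}$ has image all of $\mathcal{D}$, which is trivially cofinal. For condition (b), since each $\phi_D$ is an isomorphism it is in particular an open immersion whose complement $Q^{r,-n}(D)\setminus\phi_D(\Div^{r,n}(D))$ is empty, so its codimension exceeds every integer; thus, whatever integer is prescribed in (b), the corresponding threshold divisor may be taken to be $0$. I expect the only real work to lie in the functor-of-points argument of the second paragraph, namely upgrading the evident bijection on closed points to an isomorphism of schemes by checking that tensoring with a line bundle preserves flatness of families and the constant Hilbert polynomial of a torsion quotient, whereas the compatibility with the two kinds of structure maps is a formal consequence of the associativity of tensoring.
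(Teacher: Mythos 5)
Your construction coincides with the paper's own: the paper likewise takes the index map to be the identity and defines the level map $\Div^{r,n}(D)\to Q^{r,-n}(D)$ by tensoring the exact sequence $0\to \Ker(q)\to \mathcal{O}_C(D)^{\oplus r}\to \mathcal{F}\to 0$ with $\mathcal{O}_C(-D)$, and then cites Bifet--Ghione--Letizia for the fact that these maps induce the desired quasi-isomorphism. Your write-up simply supplies the verifications the paper delegates to that citation (the identification of $\Div^{r,n}(D)$ with $\Quot^{r\deg(D)-n}_{\mathcal{O}_C(D)^{\oplus r}}$, commutation with the two kinds of transition maps, and conditions (a) and (b)), and your further observation that each $\phi_D$ is in fact an isomorphism---so that both conditions hold trivially---is correct under the paper's definitions.
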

\begin{proof}
	See \cite[Remark, page-647]{BGL}.  Infact, let $D$ be an effective divisor on $C$ of degree $d$. Let $(\mathcal{F},q)\in \Quot_{\mathcal{O}_C(D)^r}^{rd-n}$.  Then we have the following exact sequence.
	\begin{equation*}
	\xymatrix{ 0 \ar[r]& \Ker (q) \ar[r] &\mathcal{O}_C(D)^r\ar[r]^{q} &\mathcal{F}
		\ar[r] &0},
	\end{equation*}
	where $\deg(\mathcal{F})=rd-n$. Tensoring this with $\mathcal{O}_C(-D)$ we get,
	\begin{equation*}
	\xymatrix{ 0 \ar[r]& \Ker (q)\otimes \mathcal{O}_C(-D) \ar[r] &\mathcal{O}_C^r\ar[rr]^{q_1} &&\mathcal{F}\otimes \mathcal{O}_C(-D)
		\ar[r] &0},
	\end{equation*}
	where $\deg(\mathcal{F}\otimes \mathcal{O}_C(-D))=rd-n$.  Hence, $(\mathcal{F}\otimes \mathcal{O}_C(-D),q_1)\in \Quot_{\mathcal{O}_C^r}^{rd-n}.$  So we get a map $\Quot_{\mathcal{O}_C(D)^r}^{rd-n}\rightarrow \Quot_{\mathcal{O}_C^r}^{rd-n}$.  Restricting this map to the rational points of $\Quot_{\mathcal{O}_C(D)^r}^{rd-n}$, we obtain a map $\Div^{r,n}(D)\rightarrow Q^{r,-n}(D)$.  This map in turn will induce the required quasi-isomorphism 
	\begin{equation*}
	{\mathbf{Div}}^{r,n}\rightarrow {\mathbf{Q}}^{r,-n}.
	\end{equation*}  
\end{proof}
\begin{remark}\label{ind-variety of divisors}
By Theorem \ref{T2}, we can interpret ${\mathbf{Q}}^{r,-n}$ as the ind-variety of $(r,n)$-divisors on $C$.
\end{remark}
Now we are in a stage to describe what we mean by the diagonal property and the weak point property of an ind-variety.  In this regard, we have couple of definitions as follows.  The notion of smoothness of an ind-variety (cf. \cite[\S 2, p. 643]{BGL}) motivates us to define the following two notions relevant to our context. 
\begin{definition}\label{indvariety_dp}
Let $\Lambda$ be a filtered ordered set.  Let $X=\{X_{\lambda}, f_{\lambda \mu}\}_{\lambda, \mu \in \Lambda}$ be an ind-variety.  Then $X$ is said to have the diagonal property (respectively weak point property) if there exists some $\lambda_0\in \Lambda$ such that for all $\lambda\geq \lambda_0$, the varieties $X_{\lambda}$'s have the diagonal property (respectively weak point property).
\end{definition}

Let us now associate a rational number to a given higher rank divisor.  In fact, this number helps us to find some ind-varieties having the diagonal property and weak point property. 
\begin{definition}\label{slope of higher rank divisor}
For a given $(r,n)$-divisor, the rational number $\tfrac{n}{r}$ is said to its slope.
\end{definition}
 
We now prove a couple of theorems about the diagonal property and weak point property of ind-varieties of $(r,n)$-divisors, when the rational number as in Definition \ref{slope of higher rank divisor} is in fact an integer.
\begin{theorem}\label{integral slope case}
Let $C$ be a smooth projective curve over $\mathbb{C}$.  Also let $r\geq 1$ and $n$ be two integers.  Then the ind-variety of $(r,n)$-divisors having integral slope on $C$ has the weak point property. 
\end{theorem}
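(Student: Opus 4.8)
The plan is to reduce the statement to Theorem \ref{Quot scheme_wpp} by way of the identification of the ind-variety of $(r,n)$-divisors with ${\mathbf{Q}}^{r,-n}$ furnished by Theorem \ref{T2} and Remark \ref{ind-variety of divisors}. By Definition \ref{Quot schemes as constituent of ind-variety}, the constituent varieties of ${\mathbf{Q}}^{r,-n}$ are the Quot schemes $Q^{r,-n}(D)=\Quot^{-n+r\cdot \deg(D)}_{\mathcal{O}_C^r}$ indexed by the effective divisors $D$ on $C$. In view of Definition \ref{indvariety_dp}, it therefore suffices to exhibit an effective divisor $D_0$ such that for every $D\geq D_0$ the variety $Q^{r,-n}(D)$ has the weak point property.

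The key observation I would use is that the integral slope hypothesis translates precisely into the divisibility condition demanded by Theorem \ref{Quot scheme_wpp}. Indeed, an $(r,n)$-divisor has integral slope exactly when $\tfrac{n}{r}\in\mathbb{Z}$, that is, when $r\mid n$. Matching notation with Theorem \ref{Quot scheme_wpp}, whose Quot scheme $\Quot^{d}_{\mathcal{O}_C^n}$ carries a rank parameter $n$ and a degree parameter $d$, I take the rank parameter to be our $r$ and the degree parameter to be $d_D:=-n+r\cdot \deg(D)$. Since $r\mid r\cdot\deg(D)$, the divisibility hypothesis $r\mid d_D$ is equivalent to $r\mid n$, which is exactly the integral slope condition.

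With this alignment in place, the argument is immediate. I would choose $D_0$ to be any effective divisor with $\deg(D_0)>\tfrac{n}{r}$, so that $d_{D_0}=-n+r\cdot\deg(D_0)>0$. For any $D\geq D_0$ one has $\deg(D)\geq \deg(D_0)$, whence $d_D$ is a positive integer divisible by $r$, and $r\geq 1$ by hypothesis. Applying Theorem \ref{Quot scheme_wpp} with its rank $n$ taken to be our $r$ and its degree $d$ taken to be $d_D$, I conclude that $Q^{r,-n}(D)=\Quot^{d_D}_{\mathcal{O}_C^r}$ has the weak point property. By Definition \ref{indvariety_dp}, this shows that the ind-variety ${\mathbf{Q}}^{r,-n}$, and hence the ind-variety of $(r,n)$-divisors of integral slope, has the weak point property.

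The only genuine subtlety — and thus the step I would check most carefully — is the bookkeeping in reconciling the two different uses of the symbols $n$ and $d$ in the present statement and in Theorem \ref{Quot scheme_wpp}: once the roles are aligned correctly, the integral slope hypothesis yields the required divisibility $r\mid d_D$, and the positivity of $d_D$ for $D\geq D_0$ is automatic, so no further geometric input is needed beyond the cited theorem. I do not anticipate any essential difficulty beyond this verification.
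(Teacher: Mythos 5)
Your proposal is correct and follows essentially the same route as the paper's own proof: both identify the ind-variety of integral-slope $(r,n)$-divisors with ${\mathbf{Q}}^{r,-n}$ (equivalently ${\mathbf{Q}}^{r,-kr}$ with $n=rk$), choose $D_0$ with $\deg(D_0)>\tfrac{n}{r}$, observe that for $D\geq D_0$ the degree $-n+r\cdot\deg(D)=r(\deg(D)-k)$ is a positive multiple of $r$, and invoke Theorem \ref{Quot scheme_wpp} together with Definition \ref{indvariety_dp}. Your explicit verification of the divisibility $r\mid d_D$ is exactly the point the paper handles implicitly by writing the degree in the factored form $r(\deg(D)-k)$.
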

\begin{proof}
It can be noted that a $(r,n)$-divisor is of integral slope if and only if $n$ is an integral multiple of $r$, by Definition \ref{slope of higher rank divisor}.  Therefore, the ind-variety ${\mathbf{Div}}^{r,kr}$, or equivalently ${\mathbf{Q}}^{r,-kr}$ by Remark \ref{ind-variety of divisors}, is the ind-variety of higher rank divisors of integral slope.

Let $D$ be an effective divisor of degree $d$ on $C$.  Then we have, $Q^{r,-n}(D)=\Quot_{\mathcal{O}_C^r}^{rd-n}$ by Definition \ref{Quot schemes as constituent of ind-variety}.  Now if $n=rk$ for some integer $k$, then $Q^{r,-rk}(D)=\Quot_{\mathcal{O}_C^r}^{rd-rk}=\Quot_{\mathcal{O}_C^r}^{r(d-k)}$.  Now let's pick an effective divisor $D_0$ of degree $d_0$ satisfying the inequality $d_0>k$.  Then for all $D\geq D_0$ and $n=rk$, we have 
\begin{equation}\label{eqn_1_wpp}
\deg(D)\geq \deg(D_0)=d_0>k, 
\end{equation}
and 
\begin{equation}\label{eqn_2_wpp}
Q^{r,-n}(D)=Q^{r,-rk}(D)=\Quot_{\mathcal{O}_C^r}^{r(\deg(D)-k)}.
\end{equation}
Here $r(\deg(D)-k)$ is a positive integer by \eqref{eqn_1_wpp}.  Therefore, by Theorem \ref{Quot scheme_wpp}, Definition \ref{indvariety_dp} and \eqref{eqn_2_wpp}, the ind-variety ${\mathbf{Q}}^{r,-kr}$ has the weak point property.  Hence we have the assertion.  
\end{proof}
\begin{theorem}\label{rank one case}
Let $C$ be a smooth projective curve over $\mathbb{C}$ and $n$ any given integer. Then the ind-variety of $(1,n)$-divisors on $C$ has the diagonal property.
\end{theorem}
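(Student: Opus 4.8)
The plan is to run the exact same scheme as in the proof of Theorem \ref{integral slope case}, but feeding in the diagonal property of symmetric products (Theorem \ref{symm prod_dp}) in place of the weak point property of Quot schemes. By Remark \ref{ind-variety of divisors}, the ind-variety of $(1,n)$-divisors is ${\mathbf{Q}}^{1,-n}$, whose constituent varieties are, by Definition \ref{Quot schemes as constituent of ind-variety},
\begin{equation*}
Q^{1,-n}(D)=\Quot^{\deg(D)-n}_{\mathcal{O}_C},
\end{equation*}
for each effective divisor $D$ on $C$. So the entire statement reduces to understanding the diagonal property of these rank-one Quot schemes for $D$ ranging cofinally in $\mathcal{D}$.

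The key observation I would invoke is the isomorphism $\Sym^d(C)\cong \Quot^{d}_{\mathcal{O}_C}$ recorded earlier in the excerpt. This identifies $Q^{1,-n}(D)$ with $\Sym^{\deg(D)-n}(C)$ whenever the exponent $\deg(D)-n$ is a positive integer, and Theorem \ref{symm prod_dp} guarantees that every such symmetric product has the diagonal property. Thus the only thing to arrange is that $\deg(D)-n>0$ for all $D$ sufficiently large in the ordering.

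To arrange this, I would choose an effective divisor $D_0$ of degree $d_0>n$ (such divisors exist and the set $\{D\geq D_0\}$ is cofinal). Then for every $D\geq D_0$ one has $\deg(D)\geq d_0>n$, so $\deg(D)-n>0$ and hence
\begin{equation*}
Q^{1,-n}(D)\cong \Sym^{\deg(D)-n}(C)
\end{equation*}
has the diagonal property by Theorem \ref{symm prod_dp}. Taking $\lambda_0=D_0$ in Definition \ref{indvariety_dp}, I conclude that ${\mathbf{Q}}^{1,-n}$, i.e.\ the ind-variety of $(1,n)$-divisors, has the diagonal property.

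There is no genuine obstacle here; the argument is a direct transport of the weak-point-property proof to the diagonal setting. The only point requiring care is the positivity bookkeeping: Theorem \ref{symm prod_dp} is stated for a positive integer $d$, so I must ensure the exponent $\deg(D)-n$ stays strictly positive, which is exactly what the choice $d_0>n$ secures. (If one wished to include the boundary case $\deg(D)=n$, one notes separately that $\Sym^{0}(C)$ is a reduced point and trivially satisfies the diagonal property, but this case is unnecessary since the divisors with $\deg(D)>n$ are already cofinal.)
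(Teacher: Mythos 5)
Your proposal is correct and follows essentially the same argument as the paper's own proof: identify $Q^{1,-n}(D)=\Quot^{\deg(D)-n}_{\mathcal{O}_C}\cong \Sym^{\deg(D)-n}(C)$, choose a base divisor of degree greater than $n$ so that the exponent stays positive for all larger $D$, and invoke Theorem \ref{symm prod_dp} together with Definition \ref{indvariety_dp}. The positivity bookkeeping you flag is exactly the point the paper handles via its choice of $D_1$ with $d_1>n$, so there is nothing to add.
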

\begin{proof}
Let $D$ be an effective divisor of degree $d$ on $C$.  Then we have, $Q^{1,-n}(D)=\Quot_{\mathcal{O}_C}^{d-n}$.  Now let's pick an effective divisor $D_1$ of degree $d_1$ satisfying the inequality $d_1>n$.  Then for all $D\geq D_1$, we have 
\begin{equation}\label{eqn_1_dp}
\deg(D)\geq \deg(D_1)=d_1>n,
\end{equation}
and  
\begin{equation}\label{eqn_2_dp}
Q^{1,-n}(D)=\Quot_{\mathcal{O}_C}^{\deg(D)-n}\simeq \Sym^{\deg(D)-n}(C).
\end{equation}
Here $\deg(D)-n$ is a positive integer by \eqref{eqn_1_dp}.  Therefore, by Theorem \ref{symm prod_dp}, Definition \ref{indvariety_dp} and \eqref{eqn_2_dp}, the ind-variety ${\mathbf{Q}}^{1,-n}$ of all $(1,n)$-divisors has the diagonal property.  
\end{proof}
\begin{remark}
It can be noted a particular case of Theorem \ref{integral slope case}, namely the case $r=1$, follows from Theorem \ref{rank one case} and Remark \ref{dp implies wpp}.
\end{remark}
In \cite{PT}, Penkov and Tikhomirov studied the Barth-Van de Ven-Tyurin-Sato theorem on a locally complete linear ind-variety.  Given an ind-variety $\mathbf{X}=\{X_\lambda,f_{\lambda \mu}\}_{\lambda,\mu \in \Lambda}$, where the index set $\Lambda$ is any filtered ordered set, it is enough to check the locally completeness and linearity of an ind-variety for a countable linearly ordered subset $\mathcal{M}$ of $\Lambda$ such that for any $m,n,r \in \mathcal{M}$ with $m\leq n \leq r$ the diagram 
\begin{equation*}
\xymatrix{
X_{m} \ar[rd]_{f_{mr}}\ar[r]^{f_{mn}} & X_{n}
		\ar[d]^{f_{nr}}\\ 
		 & X_{r}}
\end{equation*}
commutes, as the ind-variety doesn't change after restricting the index set $\Lambda$ to any such $\mathcal{M}$.  In fact, in \cite{PT}, the authors defined a locally complete linear ind-variety by restricting the index set to such a countable subset $\mathcal{M}$ of a (possibly bigger) index set $\Lambda$.   

Here we show that the ind-variety of $(1,n)$-divisors, which satisfies diagonal property (cf. Theorem \ref{rank one case}), is in fact a locally complete linear ind-variety.  For that, we recall what one means by a locally complete linear ind-variety (cf. \cite[p. 816]{PT}).
\begin{definition}\label{locally complete ind-variety}
An ind-variety $\mathbf{X}=\{X_m,f_{mn}\}_{m,n \in \mathcal{N}}$ is locally complete if the varieties $X_m$'s are smooth complete algebraic varieties, $\lim_{m\rightarrow \infty} \dim X_m=\infty$ and the morphisms $f_{mn}:X_{m} \rightarrow X_{n}$ are embeddings.  
\end{definition} 
\begin{definition}\label{linear ind-variety}
A locally complete ind-variety $\mathbf{X}=\{X_m,f_{mn}\}_{m,n \in \mathcal{N}}$ is linear if the morphisms $f_{mn}^{\ast}:\Pic X_{n}\rightarrow \Pic X_{m}$ induced by the morphisms $f_{mn}$ on Picard groups are epimorphisms for almost all $m,n \in \mathcal{N}$.
\end{definition}

Let $J_d(C)$ be the moduli space of isomorphism classes of line bundles of degree $d$ over $C$ and $J(C)$ be the Jacobian variety of $C$.  Let us choose a point $P\in C$.   Consider the following composition map:
\begin{equation}\label{Abel-Jacobi map}
\begin{tikzcd}
\alpha_{d,P}:\Sym^d(C)\arrow[r,"\alpha_{d}"]& J_d(C)\arrow[r,"\otimes \mathcal{O}(-dP)"]& J(C)\\
D\arrow[r,mapsto]&\mathcal{O}_C(D)\arrow[r,mapsto]& \mathcal{O}_C(D-dP).
\end{tikzcd}
\end{equation}
We now check that the fibres of this map are projective spaces.  To be precise, we have the following lemma.
\begin{lemma}\label{using projective bundleness of symmetric products}
Let $C$ be a smooth projective curve of genus $g$.  Let $d$ be any positive integers satisfying $d\geq 2g-1$.  Then the fibre of the map $\alpha_{d,P}$, as in \eqref{Abel-Jacobi map}, over any $\mathcal{L}\in J(C)$ is $\mathbb{P}(H^0(C,\mathcal{L}\otimes dP))$ and hence is isomorphic to $\mathbb{P}^{d-g}$.
\end{lemma}
\begin{proof}
Let us denote the canonical line bundle over $C$ by $\omega_C$.  Then for any $D\in \Sym^d(C)$, the degree $\deg(\omega_C\otimes \mathcal{O}_C(D)^{\ast})$ of the line bundle $\omega_C\otimes \mathcal{O}_C(D)^{\ast}$ is $2g-2-d$.  Hence, for $d\geq 2g-1$, by Serre duality we have:
\begin{equation}\label{Serre duality}
h^1(C,\mathcal{O}_C(D))=h^0(C, \omega_C\otimes \mathcal{O}_C(D)^{\ast})=0.
\end{equation}
Therefore, by Riemann-Roch theorem and \eqref{Serre duality}, we have:
\begin{equation}\label{Riemann Roch theorem}
\begin{split}
h^0(C,\mathcal{O}_C(D))&=h^0(C,\mathcal{O}_C(D))-h^1(C,\mathcal{O}_C(D))\\
&=\deg(\mathcal{O}_C(D))+(1-g)\\
&=d-g+1.
\end{split}
\end{equation}
By Abel's theorem (cf. \cite[p. 18]{ACGH}), fibre of the map $\alpha_d$, as in \eqref{Abel-Jacobi map}, over any line bundle $L\in J_d(C)$
is the complete linear system $|D|$ of a divisor $D$ on $C$ with $\mathcal{O}_C(D)=L$.  Moreover, we have: 
\begin{equation}\label{complete linear system as projective space}
|D|=\mathbb{P}(H^0(C,\mathcal{O}_C(D))).
\end{equation}
Therefore, by \eqref{Riemann Roch theorem} and \eqref{complete linear system as projective space}, we obtain that the map $\alpha_d$, as in \eqref{Abel-Jacobi map} is a projective bundle, with fibres $\mathbb{P}^{d-g}$, for all $d\geq 2g-1$.  Moreover, so is the map $\alpha_{d,P}$ as the map $\otimes\mathcal{O}_C(-dP)$ is an isomorphism between $J_d(C)$ and $J(C)$ (cf. \eqref{Abel-Jacobi map with fibers in diagram}).
\begin{equation}\label{Abel-Jacobi map with fibers in diagram}
\begin{tikzcd}
\mathbb{P}^{d-g}\arrow[d, mapsto]\arrow[r, hook]  &\Sym^{d}(C)\arrow[d,"\alpha_d"'] \arrow[drr,"\alpha_{d,P}"]&&\\
\{L\}\arrow[r, hook] &J_{d}(C)\arrow[rr,"\simeq","\otimes \mathcal{O}_C(-dP)"']&&J(C)
\end{tikzcd}
\end{equation}
In fact, from \eqref{complete linear system as projective space} it follows that the fibre over any $\mathcal{L}\in J(C)$ of the map $\alpha_{d,P}$ is $\mathbb{P}(H^0(C,\mathcal{L}\otimes dP))$. 
\end{proof}
We now check locally completeness and linearity of the ind-variety of $(1,n)$-divisors.
\begin{theorem}\label{locally complete and linear_rank one case}
Let $C$ be a smooth projective curve over $\mathbb{C}$ and $n$ any given integer. Then the ind-variety of $(1,n)$-divisors on $C$ is a locally complete linear ind-variety.
\end{theorem}
\begin{proof}
The ind-variety of $(1,n)$-divisors on $C$ is ${\mathbf{Q}}^{1,-n}$.  We now choose the index set $\mathcal{N}$ to be the set $\{mP\mid m\geq n\}$.  Then the ind-variety formed by the varieties $\{Q^{1,-n}(D)\}_{D\in \mathcal{N}}$ along with the corresponding morphisms as in \eqref{E7} is nothing but the ind-variety ${\mathbf{Q}}^{1,-n}$.  

Let $r$ and $m$ be two integers with $r\geq m \geq n$.  Let $(\mathcal{F}_m,q_m) \in \Quot^{m-n}_{\mathcal{O}_C}$.  Therefore we have the following exact sequence:
\begin{equation*}
\xymatrix{ 0 \ar[r]& \Ker (q_m) \ar[r] &\mathcal{O}_C\ar[r]^{q_m} &\mathcal{F}_m
	\ar[r] &0},
\end{equation*}
where degree of $\mathcal{F}_m$ is $m-n$ and hence degree of $\Ker (q_m)$ is $n-m$.  We then consider the morphism, as in \eqref{E7}, in this context : 
\begin{equation*}
\begin{split}
\mathcal{O}_C((m-r)P): \Quot^{m-n}_{\mathcal{O}_C} &\rightarrow \Quot^{r-n}_{\mathcal{O}_C}\\
(\mathcal{F}_m,q_m)&\mapsto (\mathcal{F}_r,q_r),
\end{split}
\end{equation*}
where $(\mathcal{F}_r,q_r)$ satisfies the following exact sequence:
\begin{equation*}
\xymatrix{ 0 \ar[r]& \Ker (q_m)\otimes \mathcal{O}_C((m-r)P) \ar[r] &\mathcal{O}_C\ar[r]^{q_r} &\mathcal{F}_r
	\ar[r] &0}.
\end{equation*}
Also, Let $\Ker (q_m)=\mathcal{O}_C(D)$, for a divisor $D$ of $C$ of degree $n-m$.  Then we have the following isomorphism :
\begin{equation*}
\begin{split}
\eta_m:\Quot^{m-n}_{\mathcal{O}_C} &\rightarrow \Sym^{m-n}(C)\\
(\mathcal{F}_m,q_m)&\mapsto \mathcal{O}_C(-D).
\end{split}
\end{equation*}
Therefore we have the following commutative diagram :
\begin{equation*}
\begin{tikzcd}
\Quot^{m-n}_{\mathcal{O}_C}\arrow[d,"\mathcal{O}_C((m-r)P)"']\arrow[rr, "\eta_m", "\simeq"']&& \Sym^{m-n}(C)\arrow[d,"\psi_{mr}"]\\
\Quot^{r-n}_{\mathcal{O}_C}\arrow[rr, "\eta_r"', "\simeq"]&& \Sym^{r-n}(C)\;,
\end{tikzcd}
\end{equation*}
where the map $\psi_{mr}$ is given as follows :
\begin{equation}\label{inclusions of symmetric products}
\begin{split}
\psi_{mr}:\Sym^{m-n}(C)&\rightarrow \Sym^{r-n}(C)\\
D&\mapsto D+(r-m)P.
\end{split}
\end{equation}
Therefore, the ind-variety ${\mathbf{Q}}^{1,-n}$ is nothing but $\{\Sym^{m-n}(C),\psi_{mr}\}_{m,r\in \mathcal{N}}$.  

Now $\Sym^{m-n}(C)$ is smooth as $C$ is a smooth curve (cf. \cite[Proposition 3.2, p.~9]{Mi2}).  Also, as $C$ is projective, so is $C^{m-n}$.  So, $\Sym^{m-n}(C)$, being a quotient of $C^{m-n}$ by a finite map, is also a projective variety and hence complete.  Therefore, the morphisms $\psi_{mr}$ are embeddings of complete varieties. 

Moreover as, $\dim \Quot_{\mathcal{O}_C}^{m-n}=\dim \Sym^{m-n}(C)=m-n$, 
$$\lim_{m\rightarrow \infty} \dim Q^{1,-n}(mP)=\lim_{m\rightarrow \infty} (m-n)=\infty.$$

Therefore,  ${\mathbf{Q}}^{1,-n}$ is a locally complete ind-variety.
	
Now, for any $r>2g-2+n$, $\Sym^{r-n}(C)$ is a projective bundle over $J(C)$ by Lemma \ref{using projective bundleness of symmetric products}.  Therefore, by \cite[Chapter \RomanNumeralCaps{2}, Exercise 7.9, p.~170]{Ha},
\begin{equation}\label{Picard group structure}
\Pic(Q^{1,-n}(rP))=\Pic(\Sym^{r-n}(C))=\Pic(J(C))\oplus \mathbb{Z}.
\end{equation}	
Moreover, for any $r\geq m>2g-2+n$, we have the following commutative diagram, where the maps $\psi_{mr}$, $\alpha_{m-n,P}$ and $\alpha_{r-n,P}$ are as in \eqref{inclusions of symmetric products} and \eqref{Abel-Jacobi map} respectively :
\begin{equation*}
\begin{tikzcd}
\Sym^{m-n}(C)\arrow[rr, "\psi_{mr}"]\arrow[rd,"\alpha_{m-n,P}"'] &&\Sym^{r-n}(C)\arrow[ld,"\alpha_{r-n,P}"]\\
&J(C)&
\end{tikzcd}
\end{equation*}
Therefore, we conclude that the $\Pic(J(C))$ component of $\Pic(\Sym^{r-n}(C))$, as in \eqref{Picard group structure}, is pulled back to $\Pic(J(C))$ part of $\Pic(\Sym^{m-n}(C))$ via $\psi_{mr}^{\ast}$.  Furthermore, by Lemma \ref{using projective bundleness of symmetric products}, we have the following commutative diagram for any $\mathcal{L}\in J(C)$ : 
\begin{equation*}
\begin{tikzcd}
\mathbb{P}(H^0(C,\mathcal{L}\otimes (m-n)P))\arrow[d, hook]\arrow[rr, hook]&& \mathbb{P}(H^0(C,\mathcal{L}\otimes (r-n)P))\arrow[d, hook]\\
\Sym^{m-n}(C)\arrow[rr, hook, "\psi_{mr}"]&&\Sym^{r-n}(C)
\end{tikzcd}
\end{equation*}
Therefore, $\mathcal{O}(1)_{\mathbb{P}(H^0(C,\mathcal{L}\otimes (r-n)P))}$, a generator of $\mathbb{Z}$ component of $\Pic(\Sym^{r-n}(C))$, as in \eqref{Picard group structure}, is restricted to $\mathcal{O}(1)_{\mathbb{P}(H^0(C,\mathcal{L}\otimes (m-n)P))}$, which in turn generates $\mathbb{Z}$ component of $\Pic(\Sym^{m-n}(C))$.  Altogether, for any $r\geq m>2g-2+n$ we have,
\begin{equation}\label{pullback of Picard group}
\begin{split}
\psi_{mr}^{\ast}(\Pic(\Sym^{r-n}(C)))&=\psi_{mr}^{\ast}(\Pic(J(C))\oplus \mathbb{Z})\\&=\Pic(J(C))\oplus \mathbb{Z}=\Pic(\Sym^{m-n}(C)).
\end{split}
\end{equation} 
Hence, ${\mathbf{Q}}^{1,-n}$ is a linear ind-variety as well.
\end{proof}
\begin{corollary}
Let $C$ be a smooth projective curve over $\mathbb{C}$ and $n$ any given integer. Then the Picard group of the ind-variety of $(1,n)$-divisors on $C$ is $\Pic(J(C))\oplus \mathbb{Z}$.
\end{corollary}
\begin{proof}
The Picard group $\Pic({\mathbf{Q}}^{1,-n})$ of the ind-variety ${\mathbf{Q}}^{1,-n}$ of $(1,n)$-divisors on $C$ is
defined by $\varprojlim \Pic(Q^{1,-n}(D))$, (cf. \cite[p.~816]{PT}).  Therefore, by \eqref{pullback of Picard group}, we have :
\begin{equation*}
\begin{split}
\Pic({\mathbf{Q}}^{1,-n})&=\varprojlim \Pic(Q^{1,-n}(D))\\
&=\varprojlim_{\Pic(\Sym^{r-n}(C))\in \mathcal{F}}\Pic(\Sym^{r-n}(C))=\Pic(J(C))\oplus \mathbb{Z},
\end{split}
\end{equation*}
where $\mathcal{F}$ is nothing but $\{\Pic(\Sym^{m-n}(C)),\psi_{mr}^{\ast}:\Pic(\Sym^{r-n}(C))\rightarrow \Pic(\Sym^{m-n}(C))\}_{r,n\in \mathcal{N}}$, as in Theorem \ref{locally complete and linear_rank one case}.
\end{proof}
\section{The diagonal property of the Hilbert schemes associated to a constant polynomial and its good partitions}\label{sec: Hilbert scheme}
In this section, we talk about the Hilbert schemes of a curve associated to a polynomial and its good partitions.     First we mention the importance of studying such Hilbert schemes and then show that few of these Hilbert schemes satisfy the diagonal property.  Moreover, we provide an upper bound on the number of such Hilbert schemes.

Let $P(t)$ be a polynomial with rational coefficients.  We use the notation $\Hilb^{P}_C$ to denote the Hilbert scheme parametrizing all subschemes of $C$ having Hilbert polynomial $P(t)$.  Let $n$ be a positive integer.  Then interpreting $n$ as a constant polynomial, by $\Hilb^n_C$ we mean the Hilbert scheme parametrizing subschemes of $C$ having Hilbert polynomial $n$.  Let us recall the notion of a good partition of a polynomial and a Hilbert scheme associated to that. 
\begin{definition}\label{gp_defn}
	Let $\underline{P}=(P_i)_{i=1}^s$ be a family of polynomials with rational coefficients.  Then $\underline{P}$ is said to be a good partition of $P$ if $\sum_{i=1}^s P_i=P$ and $\Hilb_C^{P_i}\neq \phi$ for all $i$.
\end{definition} 
\begin{definition}
	The Hilbert scheme associated to a polynomial $P$ and its good partition $\underline{P}$, denoted by $\Hilb_C^{\underline{P}}$ , is defined as $\Hilb^{\underline{P}}_C:=\Hilb^{P_1}_C\times_\mathbb{C}\cdots \times_\mathbb{C} \Hilb^{P_s}_C$.
\end{definition}
\begin{remark}\label{motivation for checking dp for Hilbert schemes}
At this point it is worthwhile to mention the importance of the Hilbert scheme $\Hilb^{\underline{P}}_C$.  Recall that by  $\Quot^{P}_{\mathcal{F}}$ we denote the Quot scheme parametrizing all torsion quotients of $\mathcal{F}$ having having Hilbert polynomial $P(t)$.  We have a decomposition of $\Quot_{\mathcal{O}_C^r}^P$ as follows, whenever $\Quot_{\mathcal{O}_C^r}^P$ is smooth.
	\begin{equation*}\label{E8}
	\Quot_{\mathcal{O}_C^r}^P=\bigsqcup_{\substack{\underline{P}\; such\; that \;\underline{P}\\is \;a\; good\; partition\; of\; P }}\mathcal{S}_{\underline{P}},
	\end{equation*}
	where each $\mathcal{S}_{\underline{P}}$ is smooth, the torus $\mathbb{G}_m^r$-invariant, locally closed and isomorphic to a vector bundle over the scheme $\Hilb^{\underline{P}}_C$, (cf. \cite[p. 610]{B}).  Therefore, the cohomology of $\Quot_{\mathcal{O}_C^r}^P$ can be given by the direct sum of the cohomologies of $\Hilb^{\underline{P}}_C$, where the sum varies over the good partitions of the polynomial $P$.  So to study the cohomology ring $H^{\ast}(\Quot_{\mathcal{O}_C^r}^P)$, it is enough the cohomology rings $H^{\ast}(\Hilb^{\underline{P}}_C)$, $\underline{P}$ being good partition of the polynomial $P$.  Now, to get hold of the cohomology rings $H^{\ast}(\Hilb^{\underline{P}}_C)$, it's nice to get hold of the structure of the Hilbert scheme $\Hilb^{\underline{P}}_C$.  Now, as the diagonal property and the weak point property force strong conditions on the underlying variety (cf. \cite{PSP}), therefore to the study the cohomology of $\Quot_{\mathcal{O}_C^r}^P$ it's reasonable enough to check whether the Hilbert schemes $\Hilb^{\underline{P}}_C$'s posses these properties or not. 
\end{remark}
Remark \ref{motivation for checking dp for Hilbert schemes} motivates us to talk about the diagonal property of the Hilbert schemes associated to a constant polynomial and some particular good partitions of the same.  Towards that, we have the following lemma followed by a definition. 
\begin{definition}\label{partition of an integer} 
Let $n$ be a positive integer.  Then a partition of $n$ of length $s$ is given by a $s$-tuple $(n_1,n_2,\ldots,n_s)$ such that $\sum_{i=1}^r n_i=n$ and $n_1\geq n_2\geq \cdots \geq n_s>0$ for all $i$.  The integers $n_i$'s are called parts of the partition $(n_1,n_2,\ldots,n_s)$.
\end{definition}
\begin{lemma}\label{partition_a good partition}
Let $n$ be a given positive integer.  Then any partition of $n$ is also a good partition of $n$ and vice versa.
\end{lemma}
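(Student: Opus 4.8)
The plan is to prove the two implications separately, the crux in both directions being a description of which polynomials $P$ can occur as Hilbert polynomials of closed subschemes of the integral curve $C$, i.e.\ for which $P$ one has $\Hilb_C^P \neq \emptyset$. Once we know that the only such $P$ are the constants $\ell \in \mathbb{Z}_{\geq 0}$ (realised by length-$\ell$ zero-dimensional subschemes) and the single degree-one polynomial $dt + (1-g)$ of $C$ itself, where $d$ is the degree of the fixed polarization and $g$ the genus, both directions follow quickly.

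For the forward direction, let $(n_1, \ldots, n_s)$ be a partition of $n$, so each $n_i$ is a positive integer with $\sum_i n_i = n$. Viewing each $n_i$ as a constant polynomial, we have $\Hilb_C^{n_i} \cong \Sym^{n_i}(C)$, which is a nonempty smooth projective variety since $n_i \geq 1$; equivalently, a reduced sum of $n_i$ distinct points is a subscheme of $C$ with Hilbert polynomial $n_i$. As $\sum_i n_i = n$ holds as constant polynomials, $(n_1, \ldots, n_s)$ is by definition a good partition of $n$.

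For the converse, let $\underline{P} = (P_i)_{i=1}^s$ be a good partition of the constant polynomial $n$. Since each $\Hilb_C^{P_i} \neq \emptyset$, every $P_i$ is the Hilbert polynomial of some closed subscheme $Z_i \subseteq C$. Because $C$ is integral of dimension one, each $Z_i$ is either zero-dimensional, in which case $P_i$ is the constant $\ell_i := \operatorname{length}(Z_i) \in \mathbb{Z}_{\geq 0}$, or equal to all of $C$, in which case $P_i = dt + (1-g)$ is of degree one with positive leading coefficient $d$. I would then read off the coefficient of $t$ in the identity $\sum_{i=1}^s P_i = n$: each degree-one part contributes $d$ and the constant parts contribute $0$, so that coefficient equals $d \cdot \#\{i : Z_i = C\}$, which must vanish. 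Hence no $P_i$ equals $C$, every $P_i = \ell_i$ is a non-negative integer, and comparing constant terms gives $\sum_i \ell_i = n$. Discarding the parts with $\ell_i = 0$ and reindexing in non-increasing order produces a partition of $n$, which establishes the ``vice versa''.

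The main obstacle --- really the only substantive point --- is the classification of the possible Hilbert polynomials of subschemes of $C$, together with the observation that the leading coefficients of any degree-one parts cannot cancel. For the classification I would argue that a one-dimensional closed subscheme of $C$ contains the generic point, so its ideal sheaf is a torsion subsheaf of the torsion-free sheaf $\mathcal{O}_C$ and hence zero, giving $Z_i = C$; the remaining subschemes are zero-dimensional with constant Hilbert polynomial equal to their length. The non-cancellation is then immediate, since every copy of $C$ contributes the same strictly positive leading coefficient $d$. The only bookkeeping subtlety is the treatment of zero parts: as $\Hilb_C^0 \neq \emptyset$ on account of the empty subscheme, one must, exactly as for ordinary partitions whose parts are required to be positive, regard good partitions up to omission of zero parts, which is precisely what renders the correspondence a bijection.
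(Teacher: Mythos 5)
Your proof is correct and, in the forward direction, identical to the paper's: each part $n_i>0$ gives $\Hilb^{n_i}_C\cong\Sym^{n_i}(C)\neq\emptyset$, so every partition is good. For the converse the paper is much terser: it simply invokes the fact that for an integer $k$ the scheme $\Hilb^{k}_C$ is non-empty if and only if $k$ is positive, thereby implicitly assuming that every part of a good partition of the constant polynomial $n$ is itself a constant (integer) polynomial. Your argument supplies exactly the justification this leaves tacit: since the parts of a good partition are a priori arbitrary polynomials with rational coefficients subject only to $\Hilb_C^{P_i}\neq\emptyset$, you classify the Hilbert polynomials of closed subschemes of $C$ (non-negative constants coming from zero-dimensional subschemes, or $dt+(1-g)$ coming from $C$ itself, since a one-dimensional closed subscheme of the integral curve $C$ must have zero ideal sheaf and hence equal $C$), and then note that the strictly positive leading coefficients of any degree-one parts cannot cancel in the identity $\sum_i P_i=n$, so in fact all parts are constants. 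You also flag the convention issue surrounding the empty subscheme and $\Hilb^{0}_C$, which the paper sidesteps by asserting non-emptiness only for positive $k$; your device of discarding zero parts handles this cleanly. In short, the overall strategy is the same, but your write-up proves the classification fact that the paper's one-sentence converse only uses implicitly, and is the more complete of the two.
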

\begin{proof}
Let $n$ be a positive integer.  As $n_i>0$ and $\Hilb^{n_i}_C$ is isomorphic to the moduli space $\Sym^{n_i}(C)$ of effective divisors of degree $n_i$ over $C$, we have $\Hilb^{n_i}_C\neq \emptyset$ for all $i$.  Therefore, by Definition \ref{gp_defn}, the chosen partition of $n$ is a good partition as well.   Converse part follows from the fact that given any given integer $k$, $\Hilb^{k}_C$ is non-empty if and only if $k$ is positive.  
\end{proof} 
 
We now have the following lemma which says that if two varieties have diagonal property, then so does their product.  The statement of the lemma can be found in the literature (cf. \cite[p. 1235]{PSP}, \cite[p. 47]{D}).  The proof though is not available to the best of our knowledge.  Therefore, for the sake of completeness, we provide the proof of the same.   
\begin{lemma}\label{dp for product}
Let $X_1$ and $X_2$ be two varieties over $\mathbb{C}$ satisfying the the diagonal property.  Then the product variety $X_1\times X_2$ also have the diagonal property. 
\end{lemma}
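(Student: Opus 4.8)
The plan is to reduce the diagonal property of the product to that of the two factors by exploiting the canonical reordering isomorphism of factors. Write $Y = X_1\times X_2$ and let
\[
\sigma : Y\times Y \longrightarrow (X_1\times X_1)\times (X_2\times X_2)
\]
be the isomorphism sending $((x_1,x_2),(x_1',x_2'))$ to $((x_1,x_1'),(x_2,x_2'))$. The first thing I would record is that $\sigma$ carries the diagonal $\Delta_Y$ isomorphically onto $\Delta_{X_1}\times\Delta_{X_2}$: a point of $\Delta_Y$ has the form $((x_1,x_2),(x_1,x_2))$, which maps to $((x_1,x_1),(x_2,x_2))\in\Delta_{X_1}\times\Delta_{X_2}$. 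Since $\sigma$ is an isomorphism of the ambient spaces, establishing the diagonal property for $Y$ amounts to producing a suitable bundle and section on the right-hand space whose zero scheme is $\Delta_{X_1}\times\Delta_{X_2}$, and then pulling back through $\sigma$.

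By hypothesis (Definition \ref{dp}) there are vector bundles $E_i\to X_i\times X_i$ of rank $\dim X_i$ together with sections $s_i$ whose zero schemes satisfy $Z(s_i)=\Delta_{X_i}$, for $i=1,2$. Denoting by $p_1,p_2$ the two projections from $(X_1\times X_1)\times(X_2\times X_2)$ onto its factors, I would set
\[
E := \sigma^*\bigl(p_1^* E_1 \oplus p_2^* E_2\bigr), \qquad s := \sigma^*\bigl(p_1^* s_1 \oplus p_2^* s_2\bigr).
\]
Then $E$ is a vector bundle on $Y\times Y$ of rank $\dim X_1+\dim X_2=\dim Y$, exactly as the diagonal property requires, and $s$ is a global section of $E$. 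It remains only to identify $Z(s)$ with $\Delta_Y$.

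Since $\sigma$ is an isomorphism and forming zero schemes commutes with pullback, it suffices to show that the zero scheme of $p_1^* s_1\oplus p_2^* s_2$ equals $\Delta_{X_1}\times\Delta_{X_2}$ as a closed subscheme. A section of a direct sum vanishes precisely where both summands vanish, so this zero scheme is the scheme-theoretic intersection $Z(p_1^* s_1)\cap Z(p_2^* s_2)$; and because zero schemes pull back, $Z(p_1^* s_1)=p_1^{-1}(\Delta_{X_1})=\Delta_{X_1}\times(X_2\times X_2)$ while $Z(p_2^* s_2)=p_2^{-1}(\Delta_{X_2})=(X_1\times X_1)\times\Delta_{X_2}$.

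The step I expect to be the only genuine subtlety is verifying this last identification scheme-theoretically, not merely set-theoretically. I would argue locally: over a common trivializing open set, $s_1$ is given by $\dim X_1$ regular functions generating the ideal $\mathcal{I}(\Delta_{X_1})$ and $s_2$ by $\dim X_2$ functions generating $\mathcal{I}(\Delta_{X_2})$. Pulling these back via $p_1$ and $p_2$ and combining them, the ideal cutting out $Z(p_1^* s_1 \oplus p_2^* s_2)$ is generated by both families together, which is precisely the ideal generated by the pullbacks of $\mathcal{I}(\Delta_{X_1})$ and $\mathcal{I}(\Delta_{X_2})$, namely the ideal of the product subscheme $\Delta_{X_1}\times\Delta_{X_2}$. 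Hence $Z(p_1^* s_1\oplus p_2^* s_2)=\Delta_{X_1}\times\Delta_{X_2}$, and transporting back through $\sigma$ gives $Z(s)=\Delta_Y$. This exhibits a bundle of rank $\dim Y$ with a section cutting out the diagonal, so $X_1\times X_2$ has the diagonal property.
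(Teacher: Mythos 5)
Your proof is correct and is essentially the paper's own argument: the paper's projections $p_i\colon (X_1\times X_2)\times(X_1\times X_2)\to X_i\times X_i$, $(x_1,x_2,x_1',x_2')\mapsto (x_i,x_i')$, are exactly your $p_i\circ\sigma$, so your bundle $\sigma^*(p_1^*E_1\oplus p_2^*E_2)$ and section coincide with the paper's $p_1^*E_1\oplus p_2^*E_2$ and $(p_1^*s_1,p_2^*s_2)$. The only difference is presentational — you factor the construction through the explicit reordering isomorphism and spell out the local scheme-theoretic verification that the paper leaves implicit.
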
 
\begin{proof}
Let $i=1,2$.  Let the dimension of $X_i$ be $n_i$.  As $X_i$ satisfy the diagonal property, by Definition \ref{dp}, there exists a vector bundle $E_i$ over $X_i\times X_i$ of rank $n_i$ and a section $s_i$ of $E_i$ such that the zero scheme $Z(s_i)$ of $s_i$ is the diagonal $\Delta_{X_i}$.  Let $p_i:(X_1\times X_2)\times (X_1\times X_2) \rightarrow X_i\times X_i$ are the projection maps given by $p_i((x_1,x_2,x'_1,x'_2))=(x_i,x'_i)$.  Consider the vector bundle $p_1^{\ast}E_1\oplus p_2^{\ast}E_2$ of rank $n_1+n_2$ over $(X_1\times X_2)\times (X_1\times X_2)$.  Then the zero scheme $Z((p_1^{\ast}s_1, p_2^{\ast}s_2))$ of the section $(p_1^{\ast}s_1, p_2^{\ast}s_2)$ of $p_1^{\ast}E_1\oplus p_2^{\ast}E_2$ is the diagonal $\Delta_{X_1\times X_2}$ of $X_1\times X_2$.  Hence, the assertion follows by Definition \ref{dp}.
\end{proof} 

By multiprojective space, one means product of projective spaces (cf. \cite[Chapter 3, \S 36, p. 150]{Mus}).  The following lemma says that given two distinct partition of a positive integer $n$, the corresponding multiprojective spaces are not isomorphic.  More precisely, we have the following.
\begin{proposition}\label{non isomorphic multiprojective spaces}
Let $n$ be a positive integer.  Let $(m_1,m_2,\ldots,m_s)$ and $(n_1,n_2,\ldots,n_s)$ be two distinct partitions of $n$ of same length $s$.  Then $\mathbb{P}^{m_1}\times 
\mathbb{P}^{m_2}\times \cdots \times \mathbb{P}^{m_s}$ is not isomorphic to  $\mathbb{P}^{n_1}\times \mathbb{P}^{n_2}\times \cdots \times \mathbb{P}^{n_s}$.
\end{proposition}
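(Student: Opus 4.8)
The plan is to distinguish the two products by a topological invariant, since the obvious numerical invariants are useless here: both products have the same dimension $n=\sum_i m_i=\sum_i n_i$ and the same Picard rank $s$. The key observation is that an isomorphism of smooth complex projective varieties is in particular a homeomorphism of the underlying analytic spaces, hence induces an isomorphism of graded singular cohomology rings and in particular preserves all Betti numbers. So it suffices to show that the partition $(m_1,\dots,m_s)$ can be recovered from the Betti numbers of $\mathbb{P}^{m_1}\times\cdots\times\mathbb{P}^{m_s}$. First I would record, via the K\"unneth formula and $H^\ast(\mathbb{P}^m,\mathbb{Z})=\mathbb{Z}[x]/(x^{m+1})$, that the Poincar\'e polynomial is
\[
P_{(m_1,\dots,m_s)}(t)=\prod_{i=1}^{s}\bigl(1+t^2+t^4+\cdots+t^{2m_i}\bigr).
\]

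This reduces the proposition to the purely algebraic claim that $P_{(m_1,\dots,m_s)}$ determines the multiset $\{m_1,\dots,m_s\}$, equivalently the partition. To prove this I would substitute $q=t^2$ and rewrite each factor as a $q$-integer and then factor it into cyclotomic polynomials,
\[
1+q+\cdots+q^{m_i}=\frac{q^{\,m_i+1}-1}{q-1}=\prod_{\substack{d\mid (m_i+1)\\ d\geq 2}}\Phi_d(q),
\]
using $q^{k}-1=\prod_{d\mid k}\Phi_d(q)$. Since the parts satisfy $m_i\geq 1$, every $m_i+1\geq 2$, so no $\Phi_1$ factor survives and there are no invisible $q$-integers equal to $1$. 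Because $\mathbb{Z}[q]$ is a unique factorization domain and the $\Phi_d$ are distinct irreducibles, the polynomial $P$ determines, for each $d\geq 2$, the total multiplicity $c_d:=\#\{\,i:d\mid (m_i+1)\,\}$ of $\Phi_d$ in $P$.

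It then remains to recover the multiset from the integers $c_d$. Writing $a_k:=\#\{\,i:m_i+1=k\,\}$ for $k\geq 2$, one has $c_d=\sum_{m\geq 1}a_{md}$, and M\"obius inversion yields $a_k=\sum_{m\geq 1}\mu(m)\,c_{mk}$ (finite sums, as $a$ and $c$ have finite support). Hence the $c_d$, and therefore $P$, determine every $a_k$, i.e. the full multiset $\{m_i+1\}$ and with it the partition $(m_1,\dots,m_s)$. Consequently two distinct partitions produce distinct Poincar\'e polynomials, so the associated multiprojective spaces cannot even be homeomorphic, a fortiori not isomorphic.

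I expect the only genuine work to be the middle step, the assertion that the Poincar\'e polynomial pins down the partition: it is true, but not evident, that the factorization of $P$ into $q$-integers is unique once all parts are at least $1$, and arguing with the $q$-integer factors directly is awkward. The clean route is precisely to descend to cyclotomic polynomials, exploit the UFD property of $\mathbb{Z}[q]$, and finish by M\"obius inversion. Everything else — computing the Poincar\'e polynomial and invoking homotopy invariance of Betti numbers — is routine. I would also remark that this argument never uses that the two partitions share the same length $s$; it shows more generally that the Poincar\'e polynomial of a multiprojective space determines the partition from which it is built.
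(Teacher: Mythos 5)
Your proof is correct, but it takes a genuinely different route from the paper's. The paper works with the multiplicative structure of cohomology: after writing $H^{\ast}$ of each product via K\"unneth as a truncated polynomial ring, it identifies the generators $x_i$ with the first Chern classes of the pullbacks $pr_{m_i}^{\ast}\mathcal{O}_{\mathbb{P}^{m_i}}(1)$, notes these line bundles are nef but not ample, and invokes positivity (the nef cone is preserved by any isomorphism of varieties) to force a hypothetical isomorphism to permute these classes; comparing the truncation exponents (the nilpotency orders of the generators) then contradicts the distinctness of the partitions. You instead use only the additive invariants: the Poincar\'e polynomial $\prod_{i=1}^{s}\bigl(1+t^2+\cdots+t^{2m_i}\bigr)$, which you show determines the multiset $\{m_1,\dots,m_s\}$ by substituting $q=t^2$, factoring each $q$-integer into cyclotomic polynomials in the UFD $\mathbb{Z}[q]$, and recovering the part-multiplicities by M\"obius inversion; your steps (unique factorization, the identity $a_k=\sum_{m\geq 1}\mu(m)c_{mk}$, and the remark that positivity of the parts rules out invisible trivial factors) all check out. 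Your route is more elementary --- no Picard group or nef/ample theory --- and it proves something strictly stronger: the two spaces have different Betti numbers, hence are not even homotopy equivalent. Moreover, as you observe, it never uses that the two partitions have the same length, so for products of projective spaces it simultaneously covers the different-length case, which the paper handles separately (via first Betti numbers, and only for curves of genus $g\geq 1$, in its Proposition on partitions of different lengths); in particular your argument alone would dispose of both cases in part (2) of the paper's Theorem on the genus-zero count. What the paper's ring-theoretic approach offers in exchange is structural information about how an isomorphism would have to act (it must permute the hyperplane classes, hence essentially the factors), a line of reasoning more in the spirit of the paper's subsequent comparison of multi symmetric products.
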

\begin{proof}
The cohomology ring $H^{\ast}(\mathbb{P}^{m_i},\mathbb{Z})$ of $\mathbb{P}^{m_i}$ is the ring $\tfrac{\mathbb{Z}[x_i]}{\langle x_i^{m_i+1} \rangle}$, where $x_i$ is a generator of $H^2(\mathbb{P}^{m_i},\mathbb{Z})(\simeq \mathbb{Z})$, for all $1\leq i\leq s$.  Similarly,  $H^{\ast}(\mathbb{P}^{n_i},\mathbb{Z})=\tfrac{\mathbb{Z}[y_i]}{\langle y_i^{n_i+1} \rangle}$, for all $1\leq i\leq s$.  Now, by K\"unneth formula, we have 
\begin{equation*}\label{cohomology of multiprojective space}
\begin{split}
H^{\ast}(\mathbb{P}^{m_1}\times 
\mathbb{P}^{m_2}\times \cdots \times \mathbb{P}^{m_s},&\mathbb{Z})=\dfrac{\mathbb{Z}[x_1]}{\langle x_1^{m_1+1} \rangle}\otimes \dfrac{\mathbb{Z}[x_2]}{\langle x_2^{m_2+1} \rangle}\otimes \cdots \otimes \dfrac{\mathbb{Z}[x_s]}{\langle x_s^{m_s+1} \rangle}\\
&=\dfrac{\mathbb{Z}[x_1,x_2,\cdots, x_s]}{\langle x_1^{m_1+1}, x_2^{m_2+1}, \cdots ,x_s^{m_s+1} \rangle}=M\,(\text{say}).
\end{split}
\end{equation*} 
Similarly, we have $H^{\ast}(\mathbb{P}^{n_1}\times 
\mathbb{P}^{n_2}\times \cdots \times \mathbb{P}^{n_s},\mathbb{Z})=\tfrac{\mathbb{Z}[y_1,y_2,\cdots, y_s]}{\langle y_1^{n_1+1}, y_2^{n_2+1}, \cdots ,y_s^{n_s+1} \rangle}=N\,(\text{say})$.
Now, for all $1\leq i\leq s$, let 
\begin{equation*}
pr_{m_i}: \mathbb{P}^{m_1}\times 
\mathbb{P}^{m_2}\times \cdots \times \mathbb{P}^{m_s} \longrightarrow \mathbb{P}^{m_i}
\end{equation*}
be the $i$-th projection map.  Then, in the ring $M$, $x_i$ can be interpreted as the first Chern class $c_1(pr_{m_i}^{\ast}(\mathcal{O}_{\mathbb{P}^{m_i}}(1)))$ of the pullback of the hyperplane bundle $\mathcal{O}_{\mathbb{P}^{m_i}}(1)$ on $\mathbb{P}^{m_i}$ via the projection map $pr_{m_i}$.  Moreover, under the identification of the Picard group $\Pic(\mathbb{P}^{m_1}\times 
\mathbb{P}^{m_2}\times \cdots \times \mathbb{P}^{m_s})$ of $\mathbb{P}^{m_1}\times 
\mathbb{P}^{m_2}\times \cdots \times \mathbb{P}^{m_s}$ with direct sum of $s$ copies of $\mathbb{Z}$, the line bundle $pr_{m_i}^{\ast}(\mathcal{O}_{\mathbb{P}^{m_i}}(1))$ is nothing but $(0,0,\ldots,1,\dots,0)$, $1$ being in the $i$-th place.  Therefore, $pr_{m_i}^{\ast}(\mathcal{O}_{\mathbb{P}^{m_i}}(1))$ is globally generated (cf. \cite[Theorem 7.1, p. 150]{Ha}).  As $\mathbb{P}^{m_1}\times \mathbb{P}^{m_2}\times \cdots \times \mathbb{P}^{m_s}$, being a projective variety, is complete and therefore $pr_{m_i}^{\ast}(\mathcal{O}_{\mathbb{P}^{m_i}}(1))$ is nef (cf. \cite[Example 1.4.5, p. 51]{La}), but not ample (cf. \cite[Example 7.6.2, p. 156]{Ha}), for all $1\leq i\leq s$.  Following similar notations, $y_i=c_1(pr_{n_i}^{\ast}(\mathcal{O}_{\mathbb{P}^{n_i}}(1)))$, where the bundle $pr_{n_i}^{\ast}(\mathcal{O}_{\mathbb{P}^{n_i}}(1))$ is nef, but not ample.  Furthermore, the extremal rays of the nef cones of $\mathbb{P}^{m_1}\times 
\mathbb{P}^{m_2}\times \cdots \times \mathbb{P}^{m_s}$ and $\mathbb{P}^{n_1}\times 
\mathbb{P}^{n_2}\times \cdots \times \mathbb{P}^{n_s}$ are the one-dimensional sub cones generated by $x_i$'s and $y_i$'s respectively and these are the primitive generators as well.  So, if there exists an isomorphism $\Psi$ from $\mathbb{P}^{m_1}\times \mathbb{P}^{m_2}\times \cdots \times \mathbb{P}^{m_s}$ to $\mathbb{P}^{n_1}\times \mathbb{P}^{n_2}\times \cdots \times \mathbb{P}^{n_s}$, then $y_i$ should map to some $x_j$ under the induced isomorphism $\Psi^{\ast}$ at the cohomology level, that is to say $(\Psi^{\ast}y_1,\Psi^{\ast}y_2, \cdots, \Psi^{\ast}y_s)=(x_{\sigma(1)},x_{\sigma(2)}, \cdots, x_{\sigma(s)})$, for some $\sigma \in S_s$.  Now, as the partitions $(m_1,m_2,\ldots,m_s)$ and $(n_1,n_2,\ldots,n_s)$ of $n$ are distinct, $m_i\neq n_i$ for some $i$.  So, the rings $M$ and $N$ are not isomorphic.  But that is a contradiction to our assumption that $\mathbb{P}^{m_1}\times 
\mathbb{P}^{m_2}\times \cdots \times \mathbb{P}^{m_s}$ and $\mathbb{P}^{n_1}\times \mathbb{P}^{n_2}\times \cdots \times \mathbb{P}^{n_s}$ are isomorphic.  Hence, the assertion follows.
\end{proof}

We now generalise Proposition \ref{non isomorphic multiprojective spaces} from projective spaces to symmetric product of curves.  Towards that we have the following definition.

\begin{definition}\label{multi_symmetric_product}
Let $C$ be a smooth projective curve over $\mathbb{C}$ and $(n_1,n_2,\ldots,n_r)$ a partition of a given positive integer $n$.  Then a multi symmetric product of $C$ of type $[(n_1,n_2,\ldots,n_r),n]$ is defined as the product $\Sym^{n_1}(C)\times \Sym^{n_2}(C)\times \cdots \times \Sym^{n_r}(C)$.
\end{definition}

\begin{proposition}\label{Betti numbers of symmetric product}
Let $C$ be a smooth projective curve over $\mathbb{C}$ of genus $g$.  Then the $r$-th Betti number $B_r$ of $\Sym^n(C)$ is given by 
\begin{equation*}
B_r=B_{2n-r}=\big(\begin{smallmatrix}
  2g\\
  r
\end{smallmatrix}\big)+\big(\begin{smallmatrix}
  2g\\
  r-2
\end{smallmatrix}\big)+\ldots \;\;, 
\end{equation*}
where $0\leq r\leq n$.
\end{proposition}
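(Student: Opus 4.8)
The plan is to compute the full Poincar\'e polynomial of $\Sym^n(C)$ via Macdonald's generating-function formula for the cohomology of symmetric products of a space, and then to extract the single coefficient recording $B_r$. Since $C$ is a smooth projective curve of genus $g$, its Betti numbers are $b_0=1$, $b_1=2g$ and $b_2=1$. Macdonald's theorem then packages the Poincar\'e polynomials $P_t(\Sym^n(C))=\sum_r B_r\,t^r$ of all the symmetric products simultaneously into the single generating identity
\begin{equation*}
\sum_{n\geq 0} P_t(\Sym^n(C))\, q^n = \frac{(1+tq)^{2g}}{(1-q)(1-t^2 q)}.
\end{equation*}
My first step is to state this formula and record it with the appropriate reference to Macdonald, noting that the numerator factor $(1+tq)^{2g}$ comes from $H^1(C)$ (odd degree, hence appearing in the numerator) and the two denominator factors $(1-q)$ and $(1-t^2q)$ come from $H^0(C)$ and $H^2(C)$.

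The second step is a routine coefficient extraction. I would expand each factor on the right-hand side as a formal power series, namely $(1+tq)^{2g}=\sum_k \binom{2g}{k} t^k q^k$, together with $\tfrac{1}{1-q}=\sum_{a\geq 0} q^a$ and $\tfrac{1}{1-t^2q}=\sum_{b\geq 0} t^{2b} q^b$. Multiplying these and collecting the coefficient of $t^r q^n$ amounts to summing $\binom{2g}{k}$ over all triples $(k,a,b)$ with $k+2b=r$ and $k+a+b=n$. Writing $k=r-2b$, the surviving constraint $a=n-r+b\geq 0$ holds for every $b\geq 0$ precisely because we are in the range $r\leq n$, while $k\geq 0$ forces $0\leq b\leq \lfloor r/2\rfloor$. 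Hence
\begin{equation*}
B_r = \sum_{b=0}^{\lfloor r/2\rfloor}\binom{2g}{r-2b}=\binom{2g}{r}+\binom{2g}{r-2}+\cdots,
\end{equation*}
which is exactly the stated alternating-index sum, terminating at $\binom{2g}{0}$ or $\binom{2g}{1}$ according to the parity of $r$.

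Finally, the symmetry $B_r=B_{2n-r}$ is Poincar\'e duality: $\Sym^n(C)$ is a smooth projective complex variety of complex dimension $n$, hence a compact oriented real manifold of dimension $2n$, so its Betti numbers are palindromic. I expect the only genuine input to be the invocation of Macdonald's formula; once that is in place the remainder is bookkeeping. The single point demanding care is the hypothesis $r\leq n$, which is exactly what guarantees that the $\tfrac{1}{1-q}$ factor never truncates the sum and that the closed form above is valid with no extra correction terms.
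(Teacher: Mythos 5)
Your proof is correct, and it takes a genuinely different (more self-contained) route than the paper. The paper offers no argument at all: its proof of this proposition is a one-line citation to Macdonald [Mac, Equation 4.2, p.~322], where exactly this closed formula appears. You instead start from the more fundamental ingredient in the same source, the generating-function identity $\sum_{n\geq 0}P_t(\Sym^n(C))\,q^n=(1+tq)^{2g}/\bigl((1-q)(1-t^2q)\bigr)$, and recover the closed formula by coefficient extraction: the constraints $k+2b=r$, $k+a+b=n$ force $k=r-2b$ with $0\leq b\leq \lfloor r/2\rfloor$, and the remaining condition $a=n-r+b\geq 0$ is vacuous precisely because $r\leq n$, so $B_r=\sum_{0\leq b\leq \lfloor r/2\rfloor}\binom{2g}{r-2b}$, which is the stated sum; the palindromy $B_r=B_{2n-r}$ then follows from Poincar\'e duality since $\Sym^n(C)$ is a smooth projective variety of complex dimension $n$. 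All of this checks out, including the termination at $\binom{2g}{0}$ or $\binom{2g}{1}$ according to parity. What your route buys is transparency: one sees exactly where the hypothesis $r\leq n$ enters and why the sum truncates as it does, and the proposition becomes a corollary of a single quoted identity rather than a quoted statement itself. What the paper's route buys is brevity, since the closed formula is already printed in Macdonald; logically the external input is the same paper in both cases, so neither argument is more economical in what it assumes.
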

\begin{proof}
See \cite[Equation 4.2, p.~322]{Mac}.
\end{proof}

\begin{lemma}\label{first Betti number of multi symmetric product}
Let $C$ be a smooth projective curve over $\mathbb{C}$ of genus $g$.  Then the first Betti number $B_1$ of the multi symmetric product of $C$ of type $[(n_1,n_2,\ldots,n_r),n]$ is $2rg$.
\end{lemma}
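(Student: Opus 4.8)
The plan is to compute the first Betti number of the multi symmetric product directly from the Künneth formula together with the known Betti numbers of a single symmetric product (Proposition \ref{Betti numbers of symmetric product}). The first step is to recall that for a single symmetric product $\Sym^{n_i}(C)$ of a curve $C$ of genus $g$, the first Betti number $B_1$ is $2g$, provided $n_i \geq 1$. This follows immediately from Proposition \ref{Betti numbers of symmetric product} by setting $r=1$: the binomial formula gives $B_1 = \binom{2g}{1} = 2g$, and the lower terms $\binom{2g}{-1}$ and so on vanish. (One should note that this requires $n_i \geq 1$, which holds since $(n_1,\ldots,n_r)$ is a partition of $n$, so every part is positive.)

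The second step is to pass from a single factor to the product. The multi symmetric product of type $[(n_1,n_2,\ldots,n_r),n]$ is by Definition \ref{multi_symmetric_product} the product $\Sym^{n_1}(C)\times \cdots \times \Sym^{n_r}(C)$, so I would apply the Künneth formula to compute its first cohomology (over $\mathbb{Q}$ or $\mathbb{Z}$, since symmetric products of curves have torsion-free cohomology). The degree-one part of a tensor product of graded rings receives contributions only from terms where exactly one factor sits in degree $1$ and all remaining factors sit in degree $0$; since each $\Sym^{n_i}(C)$ is connected, its degree-$0$ cohomology is rank one. Hence
\begin{equation*}
B_1\bigl(\Sym^{n_1}(C)\times \cdots \times \Sym^{n_r}(C)\bigr) = \sum_{i=1}^{r} B_1\bigl(\Sym^{n_i}(C)\bigr) = \sum_{i=1}^{r} 2g = 2rg,
\end{equation*}
which is exactly the claimed value. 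The same additivity of first Betti numbers under products is a standard consequence of Künneth, and I would state it as such rather than re-deriving the full tensor-product structure.

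The only genuinely substantive point, and the step I would be most careful about, is the vanishing of the lower binomial terms in the formula for $B_1$ and the verification that each part $n_i$ is at least $1$ so that $\Sym^{n_i}(C)$ is a genuine positive-dimensional (or at least nonempty connected) variety to which Proposition \ref{Betti numbers of symmetric product} applies. There is no real obstacle here: the partition hypothesis guarantees $n_i > 0$, and the binomial expression collapses cleanly to $2g$ in degree one. Thus the argument is essentially a bookkeeping computation built on the single-factor Betti numbers, and the result $B_1 = 2rg$ follows at once.
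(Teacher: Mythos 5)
Your proof is correct and follows essentially the same route as the paper: both rest on Macdonald's formula giving $B_1(\Sym^{n_i}(C)) = \binom{2g}{1} = 2g$ for each factor, combined with the K\"unneth formula for the product. The only cosmetic difference is that the paper organizes the K\"unneth computation as an induction on $r$ via products of Poincar\'e polynomials, whereas you invoke the additivity of first Betti numbers directly, which is a perfectly valid shortcut.
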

\begin{proof}
We prove this by induction on $r$, the length of a partition.  For $r=1$, by Proposition \ref{Betti numbers of symmetric product}, we have $B_1=\big(\begin{smallmatrix}
  2g\\
  1
\end{smallmatrix}\big)=2g$. Let us now consider the case $r=2$.  Let us denote the Poincar\'e polynomial of a space $X$ by $\text{P.P\; of \;}X$.  Then we have:
\begin{equation*}
\begin{split}
\text{P.P\; of \; } \Sym^{n_1}(C)&\;\text{is}\;1+2gx+\cdots+x^{2n_1},\\
\text{P.P\; of \; } \Sym^{n_2}(C)&\;\text{is}\;1+2gx+\cdots+x^{2n_2}.
\end{split}
\end{equation*}  
Therefore, as Poincar\'e polynomial of a product space is obtained by the product of the Poincar\'e polynomials of the corresponding spaces, we have:
\begin{equation*}
\text{P.P\; of \; } \Sym^{n_1}(C)\times \Sym^{n_2}(C)\;\text{is}\; 1+4gx+\cdots+x^{2(n_1+n_2)}.
\end{equation*}
Therefore, the first Betti number of $\Sym^{n_1}(C)\times \Sym^{n_2}(C)$ is $4g$.\\
Let us now assume that for $r=m$, the first Betti number of the multi symmetric product of $C$ of type $[(n_1,n_2,\ldots,n_m),n]$ is $2mg$.  Then the Poincar\'e polynomial of  the multi symmetric product of $C$ of type $[(n_1,n_2,\ldots,n_{m+1}),n]$ is given by:
\begin{equation*}
\text{P.P\; of \; } \Sym^{n_1}(C) \times \Sym^{n_2}(C)\times \cdots \times \Sym^{n_{m+1}}(C),
\end{equation*}
which is same as 
\begin{equation*}
\text{P.P\; of \; } \Sym^{n_1}(C)\times \Sym^{n_2}(C)\times \cdots \times \Sym^{n_m}(C)\cdot \text{P.P\; of \; } \Sym^{n_{m+1}}(C),
\end{equation*}
which in turn is the product 
\begin{equation*}
(1+2mgx+\cdots + x^{2(n_1+n_2+\cdots+n_m)})\cdot (1+2gx+\cdots+x^{2n_{m+1}}),
\end{equation*}
which is nothing but
\begin{equation*}
1+2(m+1)gx+\cdots +x^{2(n_1+n_2+\cdots+n_{m+1})}.
\end{equation*}
Therefore, we obtain that the first Betti number of the multi symmetric product of $C$ of type $[(n_1,n_2,\ldots,n_{m+1}),n]$ is $2(m+1)g$.  Hence we have the assertion.
\end{proof}

\begin{proposition}\label{non isomorphic multi symmetric products for different lengths}
Let $C$ be a smooth projective curve over $\mathbb{C}$ of genus $g$ with $g\geq 1$.  Let $n$ be a positive integer, and $(n_1,n_2,\ldots,n_r)$ and $(m_1,m_2,\ldots,m_s)$ two distinct partitions of $n$ of different lengths.  Then the multi symmetric product of $C$ of type $[(n_1,n_2,\ldots,n_r),n]$ and $[(m_1,m_2,\ldots,m_s),n]$ are not isomorphic.
\end{proposition}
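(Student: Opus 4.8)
The plan is to distinguish the two multi symmetric products by a single topological invariant, namely the first Betti number, which has already been computed in Lemma \ref{first Betti number of multi symmetric product}. First I would record the basic principle that an isomorphism of smooth projective varieties over $\mathbb{C}$ is in particular a homeomorphism of the underlying complex analytic spaces, and hence induces isomorphisms on all singular cohomology groups. Consequently every Betti number is an invariant of the variety up to isomorphism; in particular, two isomorphic varieties must have equal first Betti numbers.

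Next I would apply Lemma \ref{first Betti number of multi symmetric product} directly to each of the two products. The multi symmetric product of type $[(n_1,n_2,\ldots,n_r),n]$ has first Betti number $2rg$, while the one of type $[(m_1,m_2,\ldots,m_s),n]$ has first Betti number $2sg$. Since the two partitions have different lengths by hypothesis, $r\neq s$; and since $g\geq 1$, this forces $2rg\neq 2sg$.

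Combining the two observations, the two multi symmetric products have distinct first Betti numbers and therefore cannot be isomorphic, which is the assertion. I would emphasise that the hypothesis $g\geq 1$ is exactly what makes this argument run: when $g=0$ both first Betti numbers vanish, so the invariant carries no information, and this degenerate case is precisely the one treated separately via products of projective spaces in Proposition \ref{non isomorphic multiprojective spaces}. There is no real obstacle here once Lemma \ref{first Betti number of multi symmetric product} is available; the only points requiring care are that the distinguishing quantity is genuinely an isomorphism invariant and that it actually differs under the stated hypotheses, and both of these are immediate.
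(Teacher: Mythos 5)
Your proposal is correct and follows exactly the paper's own argument: both invoke Lemma \ref{first Betti number of multi symmetric product} to get first Betti numbers $2rg$ and $2sg$, and conclude from $r\neq s$ and $g\geq 1$ that the two spaces cannot be isomorphic. The only difference is that you spell out explicitly that Betti numbers are isomorphism invariants and why $g\geq 1$ is essential, which the paper leaves implicit.
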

\begin{proof}
By Lemma \ref{first Betti number of multi symmetric product}, the first Betti number of the multi symmetric products of type $[(n_1,n_2,\ldots,n_r),n]$ and $[(m_1,m_2,\ldots,m_s),n]$ are $2rg$ and $2sg$ respectively.  Now as the partitions $(n_1,n_2,\ldots,n_r)$ and $(m_1,m_2,\ldots,m_s)$ are of different lengths, that is $r\neq s$, and $g>0$, the first Betti number of the multi symmetric product of corresponding types are also different, and hence they are not isomorphic. 
\end{proof}

We now have the following theorem which says about the diagonal property of the Hilbert schemes associated to a constant polynomial and its good partitions.  Moreover, we provide an upper bound on the number of such Hilbert schemes and prove that the obtained bound is the best possible bound.

\begin{theorem}\label{Main Theorem 3}
Let $C$ be a smooth projective curve over $\mathbb{C}$ and $n$ a positive integer.  Let $p(n)$ denote the number of partitions of $n$.  Then the following hold:
\begin{enumerate}
\item There are at most $p(n)$ many Hilbert schemes $\Hilb^{\underline{n}}_C$ (up to isomorphism) associated to the constant polynomial $n$ and its good partitions $\underline{n}$ satisfying diagonal property.
\item Moreover, this upper bound is is attained by any genus $0$ curve $C$ and hence is sharp.
\item Furthermore, for $n=1,2,3$, the upper bound is attained by any curve $C$.
\end{enumerate}
\end{theorem}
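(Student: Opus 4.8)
The plan is to dispatch the three parts in order, reducing each to a statement about multi symmetric products.

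For part (1), I would begin by using Lemma \ref{partition_a good partition} to identify the good partitions of the constant polynomial $n$ with the ordinary partitions of the integer $n$, of which there are exactly $p(n)$. For a partition $\underline{n}=(n_1,\ldots,n_s)$, the identification $\Hilb^{n_i}_C\cong \Sym^{n_i}(C)$ exhibits $\Hilb^{\underline{n}}_C$ as the multi symmetric product $\Sym^{n_1}(C)\times\cdots\times\Sym^{n_s}(C)$. Each factor has the diagonal property by Theorem \ref{symm prod_dp}, so repeated application of Lemma \ref{dp for product} gives that the product does too. Hence each of the $p(n)$ Hilbert schemes indexed by partitions of $n$ has the diagonal property, and since distinct partitions need not give non-isomorphic schemes, there are at most $p(n)$ isomorphism classes among them.

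For part (2), taking $g=0$ forces $C\cong\mathbb{P}^1$ and hence $\Sym^{n_i}(\mathbb{P}^1)\cong\mathbb{P}^{n_i}$, so that $\Hilb^{\underline{n}}_C$ is the multiprojective space $\mathbb{P}^{n_1}\times\cdots\times\mathbb{P}^{n_s}$. Sharpness amounts to showing that distinct partitions give non-isomorphic products, which I would prove in two cases. When the partitions have the same length, Proposition \ref{non isomorphic multiprojective spaces} applies directly. When the lengths differ, the first Betti number invariant of Lemma \ref{first Betti number of multi symmetric product} degenerates (it equals $2sg=0$), so I would instead compare second Betti numbers: by the K\"unneth formula $H^2(\mathbb{P}^{n_1}\times\cdots\times\mathbb{P}^{n_s},\mathbb{Z})\cong\mathbb{Z}^{s}$, so $B_2$ records the number of factors and therefore detects the length. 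In either case the products are non-isomorphic, so all $p(n)$ partitions yield distinct Hilbert schemes and the bound is attained.

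For part (3), the governing observation is that for $n=1,2,3$ all partitions of $n$ have pairwise distinct lengths (the lengths occurring are $\{1\}$, $\{1,2\}$, and $\{1,2,3\}$ respectively). Thus for an arbitrary curve $C$ one never meets two distinct partitions of the same length. The case $g=0$ is already covered by part (2); for $g\geq 1$, any two of the partitions differ in length and Proposition \ref{non isomorphic multi symmetric products for different lengths} shows the associated multi symmetric products are non-isomorphic. Combining the two genus ranges, the $p(n)$ partitions give pairwise non-isomorphic Hilbert schemes for every curve, so the bound is attained.

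The main difficulty is localized in part (2) and in the restriction of part (3). In genus $0$ the first Betti number vanishes identically, so the clean length-detecting invariant available for $g\geq 1$ is lost and one must fall back on the cohomology-ring comparison of Proposition \ref{non isomorphic multiprojective spaces} for equal lengths together with the second Betti number for unequal lengths. The restriction $n\leq 3$ in part (3) is genuine: from $n=4$ onward there are distinct partitions of the same length (already $(3,1)$ and $(2,2)$), which the length-based invariant of Proposition \ref{non isomorphic multi symmetric products for different lengths} cannot separate; resolving these for an arbitrary positive-genus curve is exactly what the later same-length analysis (cf. Proposition \ref{prop_3}) supplies.
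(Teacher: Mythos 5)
Your proposal is correct and follows essentially the same route as the paper: part (1) via Lemma \ref{partition_a good partition}, Theorem \ref{symm prod_dp} and Lemma \ref{dp for product}; part (2) by splitting into equal versus unequal lengths, with Proposition \ref{non isomorphic multiprojective spaces} handling equal lengths; and part (3) via Proposition \ref{non isomorphic multi symmetric products for different lengths} together with the observation that the partitions of $1,2,3$ have pairwise distinct lengths. The only deviation is cosmetic: in the unequal-length, genus-$0$ case you distinguish the multiprojective spaces by their second Betti number ($B_2=s$ by K\"unneth), whereas the paper uses $\Pic\cong\mathbb{Z}^{s}$ — for these simply connected varieties this is the same rank-$s$ invariant.
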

\begin{proof}
Given a positive integer $n$, a partition $(n_1,n_2,\ldots,n_s)$ of $n$ is also a good partition by Lemma \ref{partition_a good partition}. Moreover, the associated Hilbert scheme is given by $\Hilb^{n_1}_C\times_\mathbb{C} \Hilb^{n_2}_C\times_\mathbb{C} \cdots \times_\mathbb{C} \Hilb^{n_s}_C$.  As $\Hilb^m_C\simeq \Sym^m(C)$ for any positive integer $m$, by Theorem \ref{symm prod_dp} and Lemma \ref{dp for product}, we get that the associated Hilbert scheme $\Hilb^{n_1}_C\times_\mathbb{C} \Hilb^{n_2}_C\times_\mathbb{C} \cdots \times_\mathbb{C} \Hilb^{n_s}_C$ satisfies the diagonal property.  So, up to isomorphism, there could be at most as many such Hilbert schemes as there are partitions of $n$.  Hence the first part of the assertion follows from Lemma \ref{partition_a good partition}.

We now show that the obtained upper bound for number of Hilbert schemes associated to the good partitions of the constant polynomial $n$ satisfying diagonal property is in fact is achieved in genus $0$ case.  Indeed, let us consider $C=\mathbb{P}^1$.  Then, $\Hilb^n_{\mathbb{P}^1}=\Sym^n(\mathbb{P}^1)=\mathbb{P}^n$.  Now let us take two distinct partition of $n$, say $(n_1,n_2,\ldots,n_s)$ and $(n^{'}_1,n^{'}_2,\ldots,n^{'}_t)$.  Then, as before, we have the following two mutually exclusive and exhaustive cases:\\
\textit{First Case :} $s\neq t$\\
In this case, the associated Hilbert schemes $\Hilb^{n_1}_{\mathbb{P}^1}\times_\mathbb{C} \Hilb^{n_2}_{\mathbb{P}^1}\times_\mathbb{C} \cdots \times_\mathbb{C} \Hilb^{n_s}_{\mathbb{P}^1}$ and $\Hilb^{n^{'}_1}_{\mathbb{P}^1}\times_\mathbb{C} \Hilb^{n^{'}_2}_{\mathbb{P}^1}\times_\mathbb{C} \cdots \times_\mathbb{C} \Hilb^{n^{'}_t}_{\mathbb{P}^1}$ are not isomorphic as their Picard groups are not so.  That is to say,
\begin{equation*}
\begin{split}
\Pic(\Hilb^{n_1}_{\mathbb{P}^1}\times_\mathbb{C} \Hilb^{n_2}_{\mathbb{P}^1}\times_\mathbb{C} \cdots \times_\mathbb{C} \Hilb^{n_s}_{\mathbb{P}^1})&\simeq \oplus_{i=1}^s\mathbb{Z}\ncong \oplus_{i=1}^t\mathbb{Z}\\
&=\Pic(\Hilb^{n^{'}_1}_{\mathbb{P}^1}\times_\mathbb{C} \Hilb^{n^{'}_2}_{\mathbb{P}^1}\times_\mathbb{C} \cdots \times_\mathbb{C} \Hilb^{n^{'}_t}_{\mathbb{P}^1}).
\end{split}
\end{equation*}
\textit{Second Case :} $s=t$ \\
In this case, the associated Hilbert schemes $\Hilb^{n_1}_{\mathbb{P}^1}\times_\mathbb{C} \Hilb^{n_2}_{\mathbb{P}^1}\times_\mathbb{C} \cdots \times_\mathbb{C} \Hilb^{n_s}_{\mathbb{P}^1}$ and $\Hilb^{n^{'}_1}_{\mathbb{P}^1}\times_\mathbb{C} \Hilb^{n^{'}_2}_{\mathbb{P}^1}\times_\mathbb{C} \cdots \times_\mathbb{C} \Hilb^{n^{'}_s}_{\mathbb{P}^1}$ are not isomorphic by Proposition \ref{non isomorphic multiprojective spaces}.   

Hence the second part of the assertion follows. 

Last part of the assertion follows from Proposition \ref{non isomorphic multi symmetric products for different lengths} modulo the fact that no two partitions of $m$ are of same length for $m=1,2,3$.
\end{proof}

Let us now check whether multi symmetric product spaces corresponding to two different partitions of a given integer having same length are isomorphic or not.  Proposition \ref{non isomorphic multiprojective spaces} gives us the hope to believe that they too are non-isomorphic.  Let us calculate some Betti numbers of multi symmetric products corresponding to smaller values of $n$. Also we confine ourselves to the cases where the number of parts is less than equal to $3$ to avoid tedious calculations.

Let us take $n=5$, and the partitions $(4,1)$ and $(3,2)$.  Then by Proposition \ref{Betti numbers of symmetric product}, we have:
\begin{equation*}
\begin{split}
&\text{P.P\; of \; } \Sym^4(C)\;\text{is}\; 1+2gx+(1+\big(\begin{smallmatrix}
  2g\\
  2
\end{smallmatrix}\big))x^2+\cdots+x^8,\\
&\text{P.P\; of \; } \Sym^1(C)\;\text{is}\; 1+2gx+x^2.
\end{split}
\end{equation*}
Therefore,
\begin{equation}\label{PP of (4,1)}
\text{P.P\; of \; } \Sym^4(C)\times \Sym^1(C)\;\text{is}\; 1+4gx+(2+4g^2+\big(\begin{smallmatrix}
  2g\\
  2
\end{smallmatrix}\big))x^2+\cdots+x^{10}.
\end{equation}
Again by Proposition \ref{Betti numbers of symmetric product}, we have:
\begin{equation*}
\begin{split}
&\text{P.P\; of \; } \Sym^3(C)\;\text{is}\; 1+2gx+(1+\big(\begin{smallmatrix}
  2g\\
  2
\end{smallmatrix}\big))x^2+\cdots+x^6,\\
&\text{P.P\; of \; } \Sym^2(C)\;\text{is}\; 1+2gx+(1+\big(\begin{smallmatrix}
  2g\\
  2
\end{smallmatrix}\big))x^2+2gx^3+x^4.
\end{split}
\end{equation*}
Therefore,
\begin{equation}\label{PP of (3,2)}
\text{P.P\; of \; } \Sym^3(C)\times \Sym^2(C)\;\text{is}\; 1+4gx+(2+4g^2+2 \big(\begin{smallmatrix}
  2g\\
  2
\end{smallmatrix}\big))x^2+\cdots+x^{10}.
\end{equation}

Let us take $n=14$ and the partitions $(8,6)$ and $(12,2)$.  Then by Proposition \ref{Betti numbers of symmetric product}, we have:
\begin{equation*}
\begin{split}
&\text{P.P\; of \; } \Sym^8(C)\;\text{is}\; 1+2gx+(1+\big(\begin{smallmatrix}
  2g\\
  2
\end{smallmatrix}\big))x^2+(2g+\big(\begin{smallmatrix}
  2g\\
  3
\end{smallmatrix}\big))x^3+\cdots+x^{16},\\
&\text{P.P\; of \; } \Sym^6(C)\;\text{is}\; 1+2gx+(1+\big(\begin{smallmatrix}
  2g\\
  2
\end{smallmatrix}\big))x^2+(2g+\big(\begin{smallmatrix}
  2g\\
  3
\end{smallmatrix}\big))x^3+\cdots+x^{12}.
\end{split}
\end{equation*}
Therefore,
\begin{equation}\label{PP of (8,6)}
\begin{split}
\text{P.P\; of \; } \Sym^8(C)\times \Sym^6(C)\;\text{is}\; 1+4gx &+(2+4g^2+2 \big(\begin{smallmatrix}
  2g\\
  2
\end{smallmatrix}\big))x^2\\
&+(8g+4g \big(\begin{smallmatrix}
  2g\\
  2
\end{smallmatrix}\big)+2g \big(\begin{smallmatrix}
  2g\\
  3
\end{smallmatrix}\big))x^3+\cdots+x^{28}.
\end{split}
\end{equation}
Again by Proposition \ref{Betti numbers of symmetric product}, we have :
\begin{equation*}
\begin{split}
&\text{P.P\; of \; } \Sym^{12}(C)\;\text{is}\; 1+2gx+(1+\big(\begin{smallmatrix}
  2g\\
  2
\end{smallmatrix}\big))x^2+(2g+\big(\begin{smallmatrix}
  2g\\
  3
\end{smallmatrix}\big))x^3+\cdots+x^{24},\\
&\text{P.P\; of \; } \Sym^2(C)\;\text{is}\; 1+2gx+(1+\big(\begin{smallmatrix}
  2g\\
  2
\end{smallmatrix}\big))x^2+2gx^3+x^{4}.
\end{split}
\end{equation*}
Therefore,
\begin{equation}\label{PP of (12,2)}
\begin{split}
\text{P.P\; of \; } \Sym^{12}(C)\times \Sym^2(C)\;\text{is}\; 1+4gx &+(2+4g^2+2 \big(\begin{smallmatrix}
  2g\\
  2
\end{smallmatrix}\big))x^2\\
&+(8g+4g \big(\begin{smallmatrix}
  2g\\
  2
\end{smallmatrix}\big)+ \big(\begin{smallmatrix}
  2g\\
  3
\end{smallmatrix}\big))x^3+\cdots+x^{28}.
\end{split}
\end{equation}

In both of these examples, we considered two parts.  Now let us work with 3 parts.  Let us take $n=11$ and the partitions $(5,4,2)$ and $(4,4,3)$.  Then by Proposition \ref{Betti numbers of symmetric product}, we have:
\begin{equation}\label{3 parts_first stage}
\begin{split}
&\text{P.P\; of \; } \Sym^5(C)\;\text{is}\; 1+2gx+(1+\big(\begin{smallmatrix}
  2g\\
  2
\end{smallmatrix}\big))x^2+(2g+\big(\begin{smallmatrix}
  2g\\
  3
\end{smallmatrix}\big))x^3+\cdots+x^{10},\\
&\text{P.P\; of \; } \Sym^4(C)\;\text{is}\; 1+2gx+(1+\big(\begin{smallmatrix}
  2g\\
  2
\end{smallmatrix}\big))x^2+(2g+\big(\begin{smallmatrix}
  2g\\
  3
\end{smallmatrix}\big))x^3+\cdots+x^8,\\
&\text{P.P\; of \; } \Sym^3(C)\;\text{is}\; 1+2gx+(1+\big(\begin{smallmatrix}
  2g\\
  2
\end{smallmatrix}\big))x^2+(2g+\big(\begin{smallmatrix}
  2g\\
  3
\end{smallmatrix}\big))x^3+\cdots+x^6,\\
&\text{P.P\; of \; } \Sym^2(C)\;\text{is}\; 1+2gx+(1+\big(\begin{smallmatrix}
  2g\\
  2
\end{smallmatrix}\big))x^2+2gx^3+x^4.
\end{split}
\end{equation}
From \eqref{3 parts_first stage} we obtain :
\begin{equation}\label{3 parts_second stage_first substage}
\begin{split}
\text{P.P\; of \; } \Sym^{4}(C)\times \Sym^2(C)\;\text{is}\; 1+4gx &+(2+4g^2+2 \big(\begin{smallmatrix}
  2g\\
  2
\end{smallmatrix}\big))x^2\\
&+(8g+4g \big(\begin{smallmatrix}
  2g\\
  2
\end{smallmatrix}\big)+ \big(\begin{smallmatrix}
  2g\\
  3
\end{smallmatrix}\big))x^3+\cdots+x^{12}.
\end{split}
\end{equation}
\begin{equation}\label{3 parts_second stage_second substage}
\begin{split}
\text{P.P\; of \; } \Sym^{4}(C)\times \Sym^4(C)\;\text{is}\; 1+4gx &+(2+4g^2+2 \big(\begin{smallmatrix}
  2g\\
  2
\end{smallmatrix}\big))x^2\\
&+(8g+4g \big(\begin{smallmatrix}
  2g\\
  2
\end{smallmatrix}\big)+ 2 \big(\begin{smallmatrix}
  2g\\
  3
\end{smallmatrix}\big))x^3+\cdots+x^{16}.
\end{split}
\end{equation}
From \eqref{3 parts_first stage} and \eqref{3 parts_second stage_first substage}, we obtain :
\begin{equation}\label{PP of (5,4,2)}
\begin{split}
\text{P.P\; of \; } \Sym^{5}(C)\times \Sym^{4}(C)&\times \Sym^2(C)\;\text{is}\; 1+6gx+(3+12g^2+3 \big(\begin{smallmatrix}
  2g\\
  2
\end{smallmatrix}\big))x^2\\
&+(18g+8g^3+12g \big(\begin{smallmatrix}
  2g\\
  2
\end{smallmatrix}\big)+ 2 \big(\begin{smallmatrix}
  2g\\
  3
\end{smallmatrix}\big))x^3+\cdots+x^{22}.
\end{split}
\end{equation}
From \eqref{3 parts_first stage} and \eqref{3 parts_second stage_second substage}, we obtain :
\begin{equation}\label{PP of (4,4,3)}
\begin{split}
\text{P.P\; of \; } \Sym^{4}(C)\times \Sym^{4}(C)&\times \Sym^3(C)\;\text{is}\; 1+6gx+(3+12g^2+3 \big(\begin{smallmatrix}
  2g\\
  2
\end{smallmatrix}\big))x^2\\
&+(18g+8g^3+12g \big(\begin{smallmatrix}
  2g\\
  2
\end{smallmatrix}\big)+ 3 \big(\begin{smallmatrix}
  2g\\
  3
\end{smallmatrix}\big))x^3+\cdots+x^{22}.
\end{split}
\end{equation}
\begin{remark}\label{equality and difference of Betti numbers}
\begin{enumerate}
\item It can be noted that the $2$nd Betti numbers of the multi symmetric product of type $[(4,1),5]$ and $[(3,2),5]$ are different (cf. \eqref{PP of (4,1)} and \eqref{PP of (3,2)}) whenever $g\geq 1$.  Similarly, whenever $g\geq 2$,  the $3$rd Betti numbers of both the pairs of the multi symmetric product of type $[(8,6),14]$ \& $[(12,2),14]$  and $[(5,4,2),11]$ \& $[(4,4,3),11]$ are different (cf. (\eqref{PP of (8,6)} \& \eqref{PP of (12,2)}) and (\eqref{PP of (5,4,2)} \& \eqref{PP of (4,4,3)})).
\item It can be noted that at least the first two Betti numbers, i.e, the $0$th and $1$st Betti numbers of the multi symmetric product of type $[(4,1),5]$ and $[(3,2),5]$ are same (cf. \eqref{PP of (4,1)} and \eqref{PP of (3,2)}), whereas the $0$th, $1$st and $2$nd Betti numbers of both the pairs of the multi symmetric product of type $[(8,6),14]$ \& $[(12,2),14]$ and $[(5,4,2),11]$ \& $[(4,4,3),11]$ are same (cf. (\eqref{PP of (8,6)} \& \eqref{PP of (12,2)}) and (\eqref{PP of (5,4,2)} \& \eqref{PP of (4,4,3)})). 
\end{enumerate}
\end{remark}
We have the following lemma motivated by part $(2)$ of Remark \ref{equality and difference of Betti numbers}.
\begin{lemma}\label{the equal summands}
Let $m$ and $n$ be two integers satisfying $m>n\geq 1$.  Let $B_i^m$(respectively $B_j^n$) be the $i$-th Betti number of $\Sym^m(C)$ (respectively the $j$-th Betti number of $\Sym^n(C)$) for all $1\leq i \leq 2m$ (respectively $1\leq j \leq 2n$).  Then $B_i^m=B_i^n$ for all $1\leq i \leq n$.
\end{lemma}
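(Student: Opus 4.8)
The plan is to apply Proposition \ref{Betti numbers of symmetric product} directly to both $\Sym^m(C)$ and $\Sym^n(C)$ and to observe that, in the relevant range of indices, the resulting expression for the Betti number depends only on the index and on the genus $g$, and not at all on the degree of the symmetric power. The entire content of the lemma is the degree-independence of Macdonald's formula below the midpoint, together with a bookkeeping argument on index ranges.

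First I would recall that, by Proposition \ref{Betti numbers of symmetric product}, for any positive integer $N$ the $r$-th Betti number of $\Sym^N(C)$ is
\[
B_r=\binom{2g}{r}+\binom{2g}{r-2}+\cdots,
\]
valid for $0\leq r\leq N$. The crucial point, which I would emphasize, is that the right-hand side is a function of $r$ and $g$ alone; the degree $N$ enters only through the constraint $r\leq N$, which guarantees that we are at or below the midpoint $N$ where the formula is applicable (the symmetric relation $B_r=B_{2N-r}$ is what forces the restriction to $r\leq N$).

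Next I would fix an index $i$ with $1\leq i\leq n$. Since $m>n\geq i$, we have $i\leq n\leq m$, so $i$ lies simultaneously in the admissible range $0\leq i\leq n$ for $\Sym^n(C)$ and in the admissible range $0\leq i\leq m$ for $\Sym^m(C)$. Applying the displayed formula in each case therefore gives
\[
B_i^{\,n}=\binom{2g}{i}+\binom{2g}{i-2}+\cdots=B_i^{\,m},
\]
the two right-hand sides being literally the same expression in $i$ and $g$. This establishes $B_i^{\,m}=B_i^{\,n}$ for every $1\leq i\leq n$, which is the assertion.

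I expect no genuine obstacle here beyond correctly tracking the index ranges. The only point that needs care is verifying that each index $i$ under consideration stays within the region $\{0,1,\dots,N\}$ in which the formula of Proposition \ref{Betti numbers of symmetric product} holds for both symmetric products; the hypothesis $m>n$ is exactly what secures this, since it ensures $i\leq n<m$ and hence keeps $i$ below both midpoints. Once this is observed, the equality is immediate from the coincidence of the two formulas.
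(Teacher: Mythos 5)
Your proof is correct and is exactly the argument the paper has in mind: the paper's own proof of Lemma \ref{the equal summands} consists of the single line ``Follows directly from Proposition \ref{Betti numbers of symmetric product},'' and your write-up simply makes explicit the key observation that Macdonald's formula for $B_r$ depends only on $r$ and $g$ (not on the degree of the symmetric power) once $r$ lies in the admissible range for both $\Sym^n(C)$ and $\Sym^m(C)$. No differences in approach; yours is just the spelled-out version.
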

\begin{proof}
Follows directly from Proposition \ref{Betti numbers of symmetric product}.
\end{proof}
Now we have the following definition and a couple of results followed by that which we need to handle some trivial cases while proving the upcoming proposition about the classification of multi symmetric products corresponding to distinct partitions of same length.
\begin{definition}
Two varieties $X$ and $Y$ are said to be Picard independent if given any $\mathcal{L}\in \Pic(X\times Y)$, there exist $\mathcal{L}_1\in \Pic(X)$ and $\mathcal{L}_2\in \Pic(Y)$ such that $\mathcal{L}=p_1^{\ast}\mathcal{L}_1\otimes p_2^{\ast}\mathcal{L}_2$, where $p_1:X\times Y\rightarrow X$ and $p_2:X\times Y\rightarrow Y$ are usual projections.  
\end{definition}
\begin{lemma}\label{Cancellation property of complete varieties}
Let $M$, $V$ and $W$ be varieties such that $M\times V\cong M\times W$.  If $M$ is projective and $M$ and $V$ are Picard independent, then $V\cong W$.
\end{lemma}
\begin{proof}
See \cite[Theorem 6, p.~120]{Fuj}. 
\end{proof}
\begin{corollary}\label{Cancellation property to ignore common components}
Let $V$ and $W$ be two smooth projective varieties such that $\mathbb{P}^n\times V \cong \mathbb{P}^n\times W$.  Then $V\cong W$
\end{corollary}
\begin{proof}
Follows directly from Lemma \ref{Cancellation property of complete varieties} and \cite[Chapter \RomanNumeralCaps{2}, Corollary 6.16, p.~145 \& Exercise 6.1, p.~146]{Ha}.
\end{proof}
Now we are in a situation to prove that multi symmetric products corresponding to distinct partitions of an integer of same length are not isomorphic.  In fact, generalizing the observations made in Remark \ref{equality and difference of Betti numbers},  we have the following proposition when the smallest part among all the parts of the partitions involved is bounded above.
\begin{proposition}\label{non isomorphic multi symmetric products corresponding to partitions of same length having smaller parts}
Let $C$ be a smooth projective curve over $\mathbb{C}$ of genus $g\geq 1$.  Let $n$ be a positive integer, and $(n_1,n_2,\ldots,n_r)$ and $(m_1,m_2,\ldots,m_r)$ two distinct partitions of $n$ of same length.  Then the multi symmetric product of $C$ of type $[(n_1,n_2,\ldots,n_r),n]$ and $[(m_1,m_2,\ldots,m_r),n]$ are not isomorphic whenever  $\min\{n_r,m_r\}\leq 2g-1$.
\end{proposition}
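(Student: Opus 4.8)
The plan is to distinguish the two multi symmetric products by their Betti numbers: isomorphic varieties have equal Poincar\'e polynomials, so it suffices to show that the two products have different Poincar\'e polynomials. Writing $P_k(x)$ for the Poincar\'e polynomial of $\Sym^k(C)$, whose coefficients are the Betti numbers recorded in Proposition \ref{Betti numbers of symmetric product}, the K\"unneth formula gives that the Poincar\'e polynomial of the multi symmetric product of type $[(n_1,\ldots,n_r),n]$ is $\prod_{j=1}^r P_{n_j}(x)$, and likewise $\prod_{j=1}^r P_{m_j}(x)$ for the other type. Thus the whole problem becomes the purely combinatorial statement that these two elements of $\mathbb{Z}[x]$ are distinct.

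First I would reduce to the case in which the two partitions have different smallest parts. Pairing off the parts common to both partitions and using that $\mathbb{Z}[x]$ is an integral domain, one cancels the identical factors $P_{n_j}=P_{m_j}$ that they contribute; what remains is the comparison of $\prod P_{n_j}$ and $\prod P_{m_j}$ for two reduced partitions that are still distinct, of the same reduced length $r'$, and whose smallest parts now differ. After relabelling, the first reduced partition has smallest part $e$ with multiplicity $\alpha\ge 1$, while every part of the second reduced partition is strictly larger than $e$.

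The core of the argument is then to locate the lowest degree in which the two reduced products disagree. By Proposition \ref{Betti numbers of symmetric product} the $i$-th Betti number of $\Sym^k(C)$ equals the stable value $b_i:=\binom{2g}{i}+\binom{2g}{i-2}+\cdots$ for all $0\le i\le k$, independently of $k$; this is exactly Lemma \ref{the equal summands}. Hence the coefficients of $x^\ell$ for $\ell\le e$ agree in the two products, each being the coefficient of $x^\ell$ in $\bigl(\sum_i b_i x^i\bigr)^{r'}$. At degree $e+1$ the only non-stable contribution comes from the $\alpha$ factors equal to $P_e(x)$, through the palindromic symmetry $B_{e+1}^{e}=B_{e-1}^{e}=b_{e-1}$, whereas the second partition still contributes only stable values. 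Using $b_{e+1}-b_{e-1}=\binom{2g}{e+1}$, a short computation gives that the coefficient of $x^{e+1}$ in $\prod P_{n_j}-\prod P_{m_j}$ is $-\alpha\binom{2g}{e+1}$, which is nonzero precisely when $e\le 2g-1$.

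The main obstacle is that the smallest part $e$ where the partitions genuinely differ may exceed $\min\{n_r,m_r\}$, so the hypothesis does not by itself force $\binom{2g}{e+1}\ne 0$. If $n_r\ne m_r$, or if $n_r=m_r$ but with different multiplicities, the disagreement already surfaces at degree $\min\{n_r,m_r\}+1\le 2g$ and the computation above closes the proof. The delicate configuration is when the two partitions carry identical collections of parts lying in $[1,2g-1]$; cancellation then strips away all small parts, every surviving part is $\ge 2g$, and each remaining factor enters the projective-bundle regime $P_k(x)=(1+x)^{2g-1}\bigl(1-x^{2(k-g+1)}\bigr)/(1-x)$. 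Here all factors share the common prefactor $(1+x)^{2g-1}/(1-x)$, so the comparison collapses to $\prod_j\bigl(1-x^{2(n_j-g+1)}\bigr)\ne\prod_j\bigl(1-x^{2(m_j-g+1)}\bigr)$, which I would settle by unique factorization into cyclotomic polynomials: the multiset of exponents, hence the reduced partition, is recovered by M\"obius inversion, so distinct reduced partitions yield distinct products. Thus the Betti-number comparison handles every case in which some differing part is at most $2g-1$, and the projective-bundle computation disposes of the complementary configuration.
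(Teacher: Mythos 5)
Your proof is correct, and its first half coincides with the paper's own argument: pass to Poincar\'e polynomials, cancel the factors coming from parts common to both partitions, and compare coefficients of $x^{e+1}$, where $e$ is the smallest surviving part, playing the palindromic value $B^{e}_{e+1}=B^{e}_{e-1}$ against the stable value $\binom{2g}{e+1}+\binom{2g}{e-1}+\cdots$. You are in fact slightly more careful than the paper: when the smallest surviving part occurs with multiplicity $\alpha>1$, several Künneth summands differ (not just one, as the paper asserts), but since all $\alpha$ discrepancies have the same sign the total difference is $\alpha\binom{2g}{e+1}$, exactly as you compute.

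The substantive difference is your final case, and it is not optional: it repairs a genuine gap in the paper's proof. The paper's ``w.l.o.g.'' cancellation step does not preserve the hypothesis — $\min\{n_r,m_r\}\le 2g-1$ bounds the smallest part of the \emph{original} partitions, while the coefficient argument needs the smallest \emph{surviving} part to satisfy $e\le 2g-1$. These can genuinely differ: for $g=2$ take the partitions $(6,4,1)$ and $(5,5,1)$ of $11$; the hypothesis holds since $\min\{1,1\}=1\le 3=2g-1$, but after cancelling the common part $1$ every surviving part is at least $4=2g$, so $\binom{2g}{e+1}=\binom{4}{5}=0$ and the coefficients of $x^{e+1}$ agree on both sides (indeed here $b_3=b_5=8$). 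The paper's proof ends with the coefficient comparison and therefore proves nothing in this configuration; nor is it rescued by the companion Proposition \ref{non isomorphic multi symmetric products corresponding to partitions of same length having bigger parts}, whose hypothesis $\min\{n_r,m_r\}\ge 2g-1$ also fails here. Your projective-bundle case is precisely what closes this hole: all surviving parts lie in the range $k\ge 2g-1$ where $P_k(x)=(1+x)^{2g-1}(1-x^{2(k-g+1)})/(1-x)$, the two sides retain the same number of factors (cancellation removes parts in pairs) so the prefactors cancel, and unique factorization in $\mathbb{Q}[x]$ (cyclotomic factors with M\"obius inversion, or simply the lowest-degree nonconstant term after a further cancellation) recovers the multiset of exponents, giving the contradiction. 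Note also that your two cases together never use the hypothesis $\min\{n_r,m_r\}\le 2g-1$ at all, so your argument in fact proves the stronger combined statement, Proposition \ref{non isomorphic multi symmetric products corresponding to partitions of same length}, in one stroke.
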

\begin{proof}
To prove the proposition, it is enough to show that at least one of the Betti numbers of the multi symmetric product of $C$ of type $[(n_1,n_2,\ldots,n_r),n]$ and $[(m_1,m_2,\ldots,m_r),n]$ is different.  Equivalently, it is enough to show that the at least one of the coefficients of the Poincar\'e polynomials of these two spaces are different.\\   
\textit{First Case :} $(n_1,n_2,\ldots,n_r)$ and $(m_1,m_2,\ldots,m_r)$ with no common parts\\
We have that $n_i\neq m_j$ for all $1\leq i,j \leq r$.  Moreover, we can assume that $n_1< m_1$, w.l.o.g.  Hence we have :
\begin{equation*}
\begin{split}
&1\leq n_1\leq n_2 \leq \cdots \leq n_r,\\
&1\leq m_1\leq m_2 \leq \cdots \leq m_r,\\
&n_1<m_1.
\end{split}
\end{equation*}
Clearly, Poincar\'e polynomials of the multi symmetric products are of degree $2n$.  Let us look at the coefficients of $x^{n_1+1}$, $x$ being the indeterminant of the polynomials.  The coefficient of $x^{n_1+1}$ in the Poincar\'e polynomial of the multi symmetric product $\Sym^{n_1}(C)\times \Sym^{n_2}(C)\times \cdots \times \Sym^{n_r}(C)$ arise as the sum of the product of the coefficients of lesser powers of $x$ in the Poincar\'e polynomials of $\Sym^{n_i}(C)$'s such that those powers add up to $n_1+1$.  That is,
\begin{equation}\label{coefficient in first type}
\begin{split}
\text{Coefficient\; of \; }&x^{n_1+1}\text{\; in\; the \;P.P \;of \; }\Sym^{n_1}(C)\times \Sym^{n_2}(C)\times \cdots \times \Sym^{n_r}(C)\\
&=\sum_{t_1+t_2+\cdots+t_r=n_1+1}(\prod_{i=1}^r\text{Coefficient\; of \; }x^{t_i}\text{\; in\; the \;P.P \;of \; }\Sym^{n_i}(C)).
\end{split}
\end{equation} 
Similarly, we have :
\begin{equation}\label{coefficient in second type}
\begin{split}
\text{Coefficient\; of \; }&x^{n_1+1}\text{\; in\; the \;P.P \;of \; }\Sym^{m_1}(C)\times \Sym^{m_2}(C)\times \cdots \times \Sym^{m_r}(C)\\
&=\sum_{t_1+t_2+\cdots+t_r=n_1+1}(\prod_{i=1}^r\text{Coefficient\; of \; }x^{t_i}\text{\; in\; the \;P.P \;of \; }\Sym^{m_i}(C)).
\end{split}
\end{equation}   
Now as the number of parts in both the partitions are same, which is $r$, not only the number of summands on the r.h.s of \eqref{coefficient in first type} and \eqref{coefficient in second type} are same but also the number of terms which are getting multiplied in each such summand are also same.\\ 
By Proposition \ref{Betti numbers of symmetric product}, we have :
\begin{equation}\label{coefficient corresponding to smallest part}
\begin{split}
\text{Coefficient\; of \; } &x^{n_1+1} \text{\; in\; the\; P.P\; of \; } \Sym^{n_1}(C)\\
&= \left\{ \begin{array}{ll}
\big(\begin{smallmatrix}
  2g\\
  n_1-1
\end{smallmatrix}\big)+\big(\begin{smallmatrix}
  2g\\
  n_1-3
\end{smallmatrix}\big)+\cdots+\big(\begin{smallmatrix}
  2g\\
  0
\end{smallmatrix}\big),& \mbox{if $n_1$ is odd};\\ \\ \big(\begin{smallmatrix}
  2g\\
  n_1-1
\end{smallmatrix}\big)+\big(\begin{smallmatrix}
  2g\\
  n_1-3
\end{smallmatrix}\big)+\cdots+\big(\begin{smallmatrix}
  2g\\
  1
\end{smallmatrix}\big), &
\mbox{if $n_1$ is even}.\end{array} \right.
\end{split}
\end{equation}
Similarly, as $n_1<m_1$ or equivalently $n_1+1\leq m_1$, by Proposition \ref{Betti numbers of symmetric product}, we have :
\begin{equation}\label{coefficient corresponding to the other one}
\begin{split}
\text{Coefficient\; of \; } &x^{n_1+1} \text{\; in\; the\; P.P\; of \; } \Sym^{m_1}(C)\\
&= \left\{ \begin{array}{ll}
\big(\begin{smallmatrix}
  2g\\
  n_1+1
\end{smallmatrix}\big)+\big(\begin{smallmatrix}
  2g\\
  n_1-1
\end{smallmatrix}\big)+\cdots+\big(\begin{smallmatrix}
  2g\\
  0
\end{smallmatrix}\big),& \mbox{if $n_1$ is odd};\\ \\ \big(\begin{smallmatrix}
  2g\\
  n_1+1
\end{smallmatrix}\big)+\big(\begin{smallmatrix}
  2g\\
  n_1-1
\end{smallmatrix}\big)+\cdots+\big(\begin{smallmatrix}
  2g\\
  1
\end{smallmatrix}\big), &
\mbox{if $n_1$ is even}.\end{array} \right.
\end{split}
\end{equation}
Therefore, whenever $n_1\leq 2g-1$, by \eqref{coefficient corresponding to smallest part} and \eqref{coefficient corresponding to the other one}, we have :
\begin{equation}\label{at least one coefficient is different}
\begin{split}
\text{Coefficient\; of \; }& x^{n_1+1} \text{\; in \; the\; P.P\; of \; } \Sym^{n_1}(C)\\
&\neq \text{Coefficient\; of \; } x^{n_1+1} \text{\; in\; the\; P.P\; of \; } \Sym^{m_1}(C).
\end{split}
\end{equation}
By \eqref{at least one coefficient is different}, all the summands on the r.h.s of \eqref{coefficient in first type} and \eqref{coefficient in second type} are equal (by Lemma \ref{the equal summands}) except the summand 
\begin{equation*}
\begin{split}
&\text{Coefficient\; of \; } x^{n_1+1} \text{\; in\; the\; P.P\; of \; } \Sym^{n_1}(C)\\
&\times \text{\;constant\; coefficient \;in\; the\; P.P \; of \; } \Sym^{n_2}(C)\times \cdots \\
&\times \text{\;constant\; coefficient\; in\; the\; P.P \; of \; } \Sym^{n_r}(C)\\
&=\text{Coefficient\; of \; } x^{n_1+1} \text{\; in\; the\; P.P\; of \; } \Sym^{n_1}(C)\times \big(\begin{smallmatrix}
  2g\\
  0
\end{smallmatrix}\big)\times \cdots \times \big(\begin{smallmatrix}
  2g\\
  0
\end{smallmatrix}\big)\\
&=\text{Coefficient\; of \; } x^{n_1+1} \text{\; in\; the\; P.P\; of \; } \Sym^{n_1}(C)
\end{split}
\end{equation*}
in the coefficient of $x^{n_1+1}$ in the Poincar\'e polynomial of $\Sym^{n_1}(C)\times \Sym^{n_2}(C)\times \cdots \times \Sym^{n_r}(C)$ and the summand 
\begin{equation*}
\begin{split}
&\text{Coefficient\; of \; } x^{n_1+1} \text{\; in\; the\; P.P\; of \; } \Sym^{m_1}(C)\\
&\times \text{\;constant\; coefficient \;in\; the\; P.P\; of \; } \Sym^{m_2}(C)\times \cdots \\
&\times \text{\;constant\; coefficient\; in\; the\; P.P\; of \; } \Sym^{m_r}(C)\\
&=\text{Coefficient\; of \; } x^{n_1+1} \text{\; in\; the\; P.P\; of \; } \Sym^{m_1}(C)\times \big(\begin{smallmatrix}
  2g\\
  0
\end{smallmatrix}\big)\times \cdots \times \big(\begin{smallmatrix}
  2g\\
  0
\end{smallmatrix}\big)\\
&=\text{Coefficient\; of \; } x^{n_1+1} \text{\; in\; the\; P.P\; of \; } \Sym^{m_1}(C)
\end{split}
\end{equation*}
in the coefficient of $x^{n_1+1}$ in the Poincar\'e polynomial of $\Sym^{m_1}(C)\times \Sym^{m_2}(C)\times \cdots \times \Sym^{m_r}(C)$.  Therefore $(n_1+1)$-th Betti number $B_{n_1+1}$ are different for the multi symmetric product of $C$ of type $[(n_1,n_2,\ldots,n_r),n]$ and $[(m_1,m_2,\ldots,m_r),n]$ and hence the assertion follows.\\
\textit{Second Case :} $(n_1,n_2,\ldots,n_r)$ and $(m_1,m_2,\ldots,m_r)$ with at least one common part\\
If some of the parts of the partitions $(n_1,n_2,\ldots,n_r)$ and $(m_1,m_2,\ldots,m_r)$ of $n$ are equal, then the assertion follows from repeated application of Corollary \ref{Cancellation property to ignore common components} and the previous part.
\end{proof}
Let us now check whether the multi symmetric products corresponding to distinct partitions of an integer of same length are isomorphic or not when the smallest part is bounded below.  

Given $r$ many positive integers $d_1, d_2, \cdots, d_r$, we denote the product map $\alpha_{d_{1},P}\times \cdots \times \alpha_{d_{r},P}$ by $\alpha_{d_1, \cdots, d_r,P}$.  That is, we have :
\begin{equation}\label{product of Abel-Jacobi maps}
\begin{split}
\alpha_{d_1, \cdots, d_r,P}:\Sym^{d_1}(C)\times \cdots \times \Sym^{d_r}(C) & \rightarrow J(C)\times \cdots \times J(C)\\
(D_1,\ldots,D_r)&\mapsto \mathcal{O}_C(D_1+\cdots+D_r-(d_1+\cdots+d_r)P).
\end{split}
\end{equation}
We now check that the fibres of this map are multiprojective spaces.  To be precise, we have the following lemma.
\begin{lemma}\label{using multi projective bundleness of multi symmetric products}
Let $C$ be a smooth projective curve of genus $g$.  Let $d_1,\cdots, d_r$ be $r$ many positive integers satisfying $d_i\geq 2g-1$ for all $1\leq i \leq r$.  Then the fibre of the map $\alpha_{d_1, \cdots, d_r,P}$, as in \eqref{product of Abel-Jacobi maps}, is isomorphic to $\mathbb{P}^{d_1-g}\times \cdots \times \mathbb{P}^{d_r-g}$.
\end{lemma}
\begin{proof}
Follows directly from Lemma \ref{using projective bundleness of symmetric products}.
\end{proof}
The following lemma characterises any morphism from a multiprojective space to an abelian variety.
\begin{lemma}\label{constants map from multiprojective space to product of Jacobians}
Let $n$, $d_1,\cdots, d_r$ be positive integers.  Then  
\begin{enumerate}
\item Any morphism from $\mathbb{P}^n$ to an abelian variety is constant.
\item Any morphism from $\mathbb{P}^{d_1}\times \cdots \times \mathbb{P}^{d_r}$ to an abelian variety is constant.
\end{enumerate}  
\end{lemma}
\begin{proof}
\begin{enumerate}
\item Let $\mathcal{A}$ be an abelian variety of dimension $m$ with local coordinates $v_i$, $1\leq i\leq m$ and $f:\mathbb{P}^n\rightarrow \mathcal{A}$ be any morphism.  By $\Omega_{\mathcal{A}}$ and $\Omega_{\mathbb{P}^n}$ we denote the cotangent bundle of the abelian variety $\mathcal{A}$ and $\mathbb{P}^n$ respectively.  As, the tangent bundle of $\mathcal{A}$ is the trivial bundle of rank $m$, so is its cotangent bundle.  Let us consider the basis $\{dv_i\mid 1\leq i\leq m\}$ of $H^0(\mathcal{A}, \Omega_{\mathcal{A}})$.  We now have the following map induced by $f$ :
\begin{equation}\label{pullback of differential forms from Jacobian to Projective line}
\begin{split}
H^0(\mathcal{A},\Omega_{\mathcal{A}})&\rightarrow H^0(\mathbb{P}^n,\Omega_{\mathbb{P}^n})\\
dv_i&\mapsto f^{\ast}dv_i\;\text{for\;all\;}i=1,\cdots, m.
\end{split}
\end{equation} 
Now as $H^0(\mathbb{P}^n,\Omega_{\mathbb{P}^n})=\{0\}$ (cf. \cite[Chapter \RomanNumeralCaps{2}, Example 8.20, p.~182]{Ha}), from \eqref{pullback of differential forms from Jacobian to Projective line} we have $f^{\ast} dv_i=0$ for all $i$.  Therefore, $f$ must be constant.     
\item Follows from the first part.  Alternatively, it can be noted that the multiprojective space $\mathbb{P}^{d_1}\times \cdots \times \mathbb{P}^{d_r}$ is a unirational variety.  Therefore the assertion follows from the fact that any rational map from a unirational variety to an abelian variety is constant, (cf. \cite[Proposition 3.10, p. 20]{Mi}).
\end{enumerate}
\end{proof}
Now we are ready to check whether the multi symmetric products corresponding to distinct partitions of an integer of same length are isomorphic or not when the smallest part is bounded below.  We have the following proposition in that regard. 
\begin{proposition}\label{non isomorphic multi symmetric products corresponding to partitions of same length having bigger parts}
Let $C$ be a smooth projective curve over $\mathbb{C}$ of genus $g$ with $g\geq 1$.  Let $n$ be a positive integer, and $(n_1,n_2,\ldots,n_r)$ and $(m_1,m_2,\ldots,m_r)$ two distinct partitions of $n$ of same length.  Then the multi symmetric product of $C$ of type $[(n_1,n_2,\ldots,n_r),n]$ and $[(m_1,m_2,\ldots,m_r),n]$ are not isomorphic whenever  $\min\{n_r,m_r\}\geq 2g-1$.
\end{proposition}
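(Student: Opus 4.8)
The plan is to exploit the two structural lemmas just established: Lemma \ref{using projective bundleness of symmetric products}, which under the hypothesis $\min\{n_r,m_r\}\ge 2g-1$ exhibits each factor as a projective bundle over $J(C)$, together with Lemma \ref{constants map from multiprojective space to product of Jacobians}, which rigidifies the fibres. Write $X=\Sym^{n_1}(C)\times\cdots\times\Sym^{n_r}(C)$ and $Y=\Sym^{m_1}(C)\times\cdots\times\Sym^{m_r}(C)$, and let $\alpha_X=\alpha_{n_1,\cdots,n_r,P}$ and $\alpha_Y=\alpha_{m_1,\cdots,m_r,P}$ be the product Abel--Jacobi maps of \eqref{product of Abel-Jacobi maps} to $A:=J(C)^r$. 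Since the partitions are decreasing, the hypothesis forces every part $n_i\ge n_r\ge 2g-1$ and $m_i\ge m_r\ge 2g-1$, so Lemma \ref{using projective bundleness of symmetric products} applies to each factor: every fibre of $\alpha_X$ is isomorphic to $\mathbb{P}^{n_1-g}\times\cdots\times\mathbb{P}^{n_r-g}$ and every fibre of $\alpha_Y$ to $\mathbb{P}^{m_1-g}\times\cdots\times\mathbb{P}^{m_r-g}$.

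First I would argue by contradiction, assuming an isomorphism $\phi\colon X\to Y$ exists, and show that $\phi$ carries the fibres of $\alpha_X$ isomorphically onto the fibres of $\alpha_Y$. Fix $a\in A$ and set $F_a=\alpha_X^{-1}(a)$, a multiprojective space by the above. The composite $(\alpha_Y\circ\phi)|_{F_a}\colon F_a\to A=J(C)^r$ is a morphism from a multiprojective space to a product of $r$ copies of $J(C)$, hence constant by Lemma \ref{constants map from multiprojective space to product of Jacobians}; therefore $\phi(F_a)$ lies in a single fibre $G_b=\alpha_Y^{-1}(b)$. As $\phi$ is an isomorphism, $\phi(F_a)$ is an irreducible closed subvariety of $G_b$ of dimension $\dim F_a=\sum_i(n_i-g)=n-rg=\sum_i(m_i-g)=\dim G_b$; since $G_b$ is irreducible this forces $\phi(F_a)=G_b$ and hence $\phi|_{F_a}\colon F_a\xrightarrow{\ \sim\ }G_b$. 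Consequently
\[
\mathbb{P}^{n_1-g}\times\cdots\times\mathbb{P}^{n_r-g}\ \cong\ \mathbb{P}^{m_1-g}\times\cdots\times\mathbb{P}^{m_r-g}.
\]

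It then remains to contradict this via Proposition \ref{non isomorphic multiprojective spaces}. Each exponent satisfies $n_i-g\ge g-1\ge 0$ and $m_i-g\ge g-1\ge 0$. When $g\ge 2$ all exponents are strictly positive, so $(n_i-g)$ and $(m_i-g)$ are distinct partitions of $n-rg$ of length $r$ and Proposition \ref{non isomorphic multiprojective spaces} applies verbatim. To cover $g=1$ uniformly I would discard the factors with vanishing exponent, which are points and do not affect the isomorphism class: the number of surviving positive factors equals the Picard rank of the product, an isomorphism invariant, hence is the same number $k$ on both sides; Proposition \ref{non isomorphic multiprojective spaces} applied to the two length-$k$ tuples of positive exponents forces them to coincide as sorted tuples, and since both original length-$r$ tuples then have the same $r-k$ vanishing entries, we get $n_i-g=m_i-g$ for all $i$, i.e. $(n_1,\ldots,n_r)=(m_1,\ldots,m_r)$, contradicting distinctness.

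I expect the principal obstacle to be the fibre-rigidity step rather than the combinatorial endgame. One must be certain that the Abel--Jacobi fibres are exactly the multiprojective spaces of Lemma \ref{using projective bundleness of symmetric products}, and that Lemma \ref{constants map from multiprojective space to product of Jacobians} allows $\phi$ to respect both fibrations with no a priori compatibility between $\alpha_X$ and $\alpha_Y$ assumed; the dimension identity $\sum_i(n_i-g)=\sum_i(m_i-g)$ is precisely what upgrades ``$\phi(F_a)$ is contained in a fibre'' to ``$\phi(F_a)$ equals a fibre.'' The only genuinely fiddly point is the bookkeeping with the $\mathbb{P}^0$ factors when $g=1$, and this is dispatched cleanly by the Picard-rank invariant.
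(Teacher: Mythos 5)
Your proposal is correct and takes essentially the same route as the paper's proof: assume an isomorphism $\psi$, use Lemma \ref{using projective bundleness of symmetric products} together with Lemma \ref{constants map from multiprojective space to product of Jacobians} to show that $\psi$ must carry each fibre of the product Abel--Jacobi map into (hence, by dimension, onto) a fibre of the other, and then derive a contradiction from Proposition \ref{non isomorphic multiprojective spaces}. Your two refinements are improvements rather than deviations: you justify the passage from ``$\psi(F_a)$ is contained in a fibre'' to ``$\psi(F_a)$ equals a fibre'' via irreducibility and equality of dimensions (the paper asserts this step more briskly), and your Picard-rank bookkeeping for the $\mathbb{P}^0$ factors in the case $g=1$ in fact patches a small gap in the paper, which applies Proposition \ref{non isomorphic multiprojective spaces} to the tuples $(n_i-g)$ and $(m_i-g)$ even though these may contain zero entries and hence need not be partitions in the sense of Definition \ref{partition of an integer}.
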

\begin{proof}
If possible, let there exists an isomorphism $\psi$ from the multi symmetric product of $C$ of type $[(n_1,n_2,\ldots,n_r),n]$ to the multi symmetric product of $C$ of type $[(m_1,m_2,\ldots,m_r),n]$.  Now, by Lemma \ref{using multi projective bundleness of multi symmetric products}, we have the following diagram:
\begin{equation*}\label{multiprojective bundle over the same base}
\begin{tikzcd}
\Sym^{n_1}(C)\times \cdots \times \Sym^{n_r}(C) \arrow[rr,"\psi","\simeq"'] \arrow[ddr, "\alpha_{n_1, \cdots, n_r,P}"']& &
\Sym^{m_1}(C)\times \cdots \times \Sym^{m_r}(C) \arrow[ddl,"\alpha_{m_1, \cdots, m_r,P}"]\\\\
&J(C)\times \cdots \times J(C)&
\end{tikzcd}
\end{equation*}
For any $(\mathcal{L}_1,\ldots,\mathcal{L}_r)\in J(C)\times \cdots \times J(C)$, consider the morphism 
\begin{equation*}\label{the composition map which is actually a constant}
\alpha_{m_1, \cdots, m_r,P}\circ \psi|_{\alpha_{n_1, \cdots, n_r,P}^{-1}(\mathcal{L}_1,\ldots,\mathcal{L}_r)}
\end{equation*}
given as follows
\begin{equation}\label{description of the composition map through diagram}
\begin{tikzcd}
\alpha_{n_1, \cdots, n_r,P}^{-1}(\mathcal{L}_1,\ldots,\mathcal{L}_r)\arrow[d, mapsto]\arrow[rrr,"\psi|_{\alpha_{n_1, \cdots, n_r,P}^{-1}(\mathcal{L}_1,\ldots,\mathcal{L}_r)}"]&&&\Sym^{m_1}(C)\times \cdots \times \Sym^{m_r}(C)\arrow[d,"\alpha_{m_1, \cdots, m_r,P}"]\\
\{(\mathcal{L}_1,\ldots,\mathcal{L}_r)\}\arrow[rrr, hook]&& &J(C)\times \cdots \times J(C)
\end{tikzcd}
\end{equation}
Now, by Lemma \ref{using multi projective bundleness of multi symmetric products}, we have that the morphism $\alpha_{m_1, \cdots, m_r,P}\circ \psi|_{\alpha_{n_1, \cdots, n_r,P}^{-1}(\mathcal{L}_1,\ldots,\mathcal{L}_r)}$, as in \eqref{description of the composition map through diagram},  is a morphism from $\mathbb{P}^{n_1-g}\times \cdots \times \mathbb{P}^{n_r-g}$ to the abelian variety $J(C)^r$ and therefore is constant, say $(\mathcal{M}_1,\ldots,\mathcal{M}_r)$, by Lemma \ref{constants map from multiprojective space to product of Jacobians}.  So it must factor through $\alpha_{m_1, \cdots, m_r,P}^{-1}(\mathcal{M}_1,\ldots,\mathcal{M}_r)$ (cf. \eqref{must factor through}).   
\begin{equation}\label{must factor through}
\begin{tikzcd}
\alpha_{n_1, \cdots, n_r,P}^{-1}(\mathcal{L}_1,\ldots,\mathcal{L}_r)\arrow[rrr,hook,"\psi|_{\alpha_{n_1, \cdots, n_r,P}^{-1}(\mathcal{L}_1,\ldots,\mathcal{L}_r)}"]&&&\alpha_{m_1, \cdots, m_r,P}^{-1}(\mathcal{M}_1,\ldots,\mathcal{M}_r) \arrow[d, mapsto]\\
&& &\{(\mathcal{M}_1,\ldots,\mathcal{M}_r)\}\subseteq J(C)^r
\end{tikzcd}
\end{equation}
So, by Lemma \ref{using multi projective bundleness of multi symmetric products}, the inclusion as in \eqref{must factor through}, is actually an inclusion between two multiprojective spaces (cf. \eqref{inclusions between product of projective spaces of same dimension}).
\begin{equation}\label{inclusions between product of projective spaces of same dimension}
\begin{tikzcd}
\alpha_{n_1, \cdots, n_r,P}^{-1}(\mathcal{L}_1,\ldots,\mathcal{L}_r)\arrow[d, "\simeq"]\arrow[rrr,hook]&&&\alpha_{m_1, \cdots, m_r,P}^{-1}(\mathcal{M}_1,\ldots,\mathcal{M}_r)\arrow[d, "\simeq"']\\
\mathbb{P}^{n_1-g}\times \cdots \times \mathbb{P}^{n_r-g}\arrow[rrr,hook]&&& \mathbb{P}^{m_1-g}\times \cdots \times \mathbb{P}^{m_r-g}
\end{tikzcd}
\end{equation}
As both $\mathbb{P}^{n_1-g}\times \cdots \times \mathbb{P}^{n_r-g}$ and $\mathbb{P}^{m_1-g}\times \cdots \times \mathbb{P}^{m_r-g}$ have the same dimension $n$, we have:
\begin{equation*}
\mathbb{P}^{n_1-g}\times \cdots \times \mathbb{P}^{n_r-g}\simeq \mathbb{P}^{m_1-g}\times \cdots \times \mathbb{P}^{m_r-g}.
\end{equation*}
Then by Proposition \ref{non isomorphic multiprojective spaces}, we obtain :
\begin{equation*}
m_i=n_i \text{\;for\;all\;} 1\leq i \leq r.
\end{equation*}
But this contradicts the fact that $(n_1,n_2,\ldots,n_r)$ and $(m_1,m_2,\ldots,m_r)$ are two distinct partitions of $n$.  Therefore, any such isomorphism $\psi$ can't exist.  Hence, the assertion follows.  
\end{proof}
\begin{remark}
In Proposition \ref{non isomorphic multi symmetric products corresponding to partitions of same length having smaller parts} and Proposition \ref{non isomorphic multi symmetric products corresponding to partitions of same length having bigger parts}, the condition $g\geq 1$ is necessary as this makes sure that all the parts of the partitions are positive.   
\end{remark}
Altogether, we obtain the following :
\begin{proposition}\label{non isomorphic multi symmetric products corresponding to partitions of same length}
Let $C$ be a smooth projective curve over $\mathbb{C}$ of genus $g$ with $g\geq 1$.  Let $n$ be a positive integer, and $(n_1,n_2,\ldots,n_r)$ and $(m_1,m_2,\ldots,m_r)$ two distinct partitions of $n$ of same length.  Then the multi symmetric product of $C$ of type $[(n_1,n_2,\ldots,n_r),n]$ and $[(m_1,m_2,\ldots,m_r),n]$ are not isomorphic.
\end{proposition}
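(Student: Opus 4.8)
The plan is to obtain this proposition as an immediate consequence of the two preceding results by splitting into cases according to the size of the smallest part relative to $2g-1$. First I would note that, by Definition \ref{partition of an integer}, the parts of each partition are arranged in non-increasing order, so $n_r$ and $m_r$ are the smallest parts of the respective partitions, and hence $\min\{n_r,m_r\}$ is the smallest part occurring in either partition. Since this is an integer, at least one of the two conditions $\min\{n_r,m_r\}\leq 2g-1$ or $\min\{n_r,m_r\}\geq 2g-1$ must hold, the boundary value $2g-1$ belonging to both ranges (which only helps). Thus these two cases are exhaustive.

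In the case $\min\{n_r,m_r\}\leq 2g-1$, I would invoke Proposition \ref{non isomorphic multi symmetric products corresponding to partitions of same length having smaller parts} directly, which already establishes that the two multi symmetric products are non-isomorphic under precisely this hypothesis. In the complementary case $\min\{n_r,m_r\}\geq 2g-1$, I would instead invoke Proposition \ref{non isomorphic multi symmetric products corresponding to partitions of same length having bigger parts}, which handles that range. Since the two ranges together cover every possibility, the non-isomorphism holds in all cases, and the assertion follows.

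The main difficulty has in fact already been surmounted in proving the two component propositions, so the present argument is essentially a bookkeeping step. The first component rests on a careful comparison of the coefficient of $x^{n_1+1}$ in the Poincar\'e polynomials, detecting a difference in Betti numbers via Proposition \ref{Betti numbers of symmetric product} together with Lemma \ref{the equal summands}; the second exploits the projective-bundle structure of symmetric products from Lemma \ref{using projective bundleness of symmetric products} combined with the rigidity of maps to abelian varieties furnished by Lemma \ref{constants map from multiprojective space to product of Jacobians}. Given these, the only thing left to verify is that the two hypotheses are jointly exhaustive, which is immediate from the trichotomy for the integer $\min\{n_r,m_r\}$ against $2g-1$. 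Hence no further computation is required, and the proposition is a formal combination of the two cases.
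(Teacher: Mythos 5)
Your proposal is correct and matches the paper's own proof exactly: the paper deduces this proposition directly from Proposition \ref{non isomorphic multi symmetric products corresponding to partitions of same length having smaller parts} and Proposition \ref{non isomorphic multi symmetric products corresponding to partitions of same length having bigger parts}, the two hypotheses $\min\{n_r,m_r\}\leq 2g-1$ and $\min\{n_r,m_r\}\geq 2g-1$ being jointly exhaustive. Your explicit remark on exhaustiveness is the only content the paper leaves implicit.
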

\begin{proof}
Follows from Proposition \ref{non isomorphic multi symmetric products corresponding to partitions of same length having smaller parts} and Proposition \ref{non isomorphic multi symmetric products corresponding to partitions of same length having bigger parts}.
\end{proof}

Finally, we observe that Theorem \ref{Main Theorem 3} can be modified further.  That is to say, we conclude that the upper bound as in Theorem \ref{Main Theorem 3}, is achieved  by any curve $C$ of any genus $g$ for any positive integer $n$.  To be precise, we obtain the following :  
\begin{theorem}\label{Main Theorem 4}
Let $C$ be a smooth projective curve over $\mathbb{C}$ and $n$ a positive integer.  Let $p(n)$ denote the number of partitions of $n$.  Then there are exactly $p(n)$ many Hilbert schemes $\Hilb^{\underline{n}}_C$ (up to isomorphism) associated to the constant polynomial $n$ and its good partitions $\underline{n}$ satisfying diagonal property.
\end{theorem}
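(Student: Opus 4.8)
The plan is to upgrade the upper bound of Theorem \ref{Main Theorem 3} to an exact count by showing that the $p(n)$ Hilbert schemes arising from the distinct partitions of $n$ are pairwise non-isomorphic. First I would recall that by Lemma \ref{partition_a good partition} every partition of $n$ is a good partition, and that for a partition $\underline{n}=(n_1,\ldots,n_s)$ the associated Hilbert scheme $\Hilb^{\underline{n}}_C=\Hilb^{n_1}_C\times_{\mathbb{C}}\cdots\times_{\mathbb{C}}\Hilb^{n_s}_C$ is isomorphic to the multi symmetric product $\Sym^{n_1}(C)\times\cdots\times\Sym^{n_s}(C)$. Combining Theorem \ref{symm prod_dp} with Lemma \ref{dp for product} shows that each such product has the diagonal property, so all $p(n)$ of them are genuine candidates, and Theorem \ref{Main Theorem 3}(1) already supplies the bound of at most $p(n)$ isomorphism classes. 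It therefore remains only to prove injectivity of the assignment partition $\mapsto$ isomorphism class, i.e. that distinct partitions yield non-isomorphic Hilbert schemes.

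Next I would dispose of the genus-zero case separately, since there the symmetric products collapse to projective spaces. For $g=0$ one has $\Sym^m(\mathbb{P}^1)\cong\mathbb{P}^m$, so the Hilbert schemes become multiprojective spaces, and the required pairwise non-isomorphism is exactly the content already extracted in the proof of Theorem \ref{Main Theorem 3}(2): partitions of different lengths are separated by their Picard ranks, and partitions of equal length by the cohomology-ring comparison of Proposition \ref{non isomorphic multiprojective spaces}.

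For $g\geq 1$ any two distinct partitions of $n$ either have different lengths or the same length, and these two alternatives are exhaustive. When the lengths differ, Proposition \ref{non isomorphic multi symmetric products for different lengths} separates the two products through their first Betti numbers $2rg$ and $2sg$, where the hypothesis $g\geq 1$ is precisely what makes these distinct. When the lengths coincide, Proposition \ref{non isomorphic multi symmetric products corresponding to partitions of same length} — itself assembled from the smaller-part Betti-number comparison and the larger-part Abel--Jacobi and projective-bundle argument resting on Lemma \ref{using projective bundleness of symmetric products} and Lemma \ref{constants map from multiprojective space to product of Jacobians} — guarantees non-isomorphism. Hence for every $g\geq 1$ no two distinct partitions can produce isomorphic Hilbert schemes.

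Assembling the three situations, namely $g=0$ and the equal- and unequal-length subcases of $g\geq 1$, I would conclude that the map from partitions of $n$ to isomorphism classes of Hilbert schemes with the diagonal property is injective; together with the upper bound of Theorem \ref{Main Theorem 3}(1) this makes it a bijection, so there are exactly $p(n)$ such classes. The whole argument is a bookkeeping synthesis of the earlier results, and the genuine analytic difficulty was already met and resolved inside Proposition \ref{non isomorphic multi symmetric products corresponding to partitions of same length}; consequently the only residual obstacle here is to verify that the case split by genus and by partition length is truly exhaustive, and that the genus-zero bound is attained by the same mechanism rather than by the Betti-number arguments, which fail when $g=0$.
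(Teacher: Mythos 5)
Your proposal is correct and takes essentially the same route as the paper: the paper's proof of Theorem \ref{Main Theorem 4} is precisely the one-line synthesis of Theorem \ref{Main Theorem 3} with Proposition \ref{non isomorphic multi symmetric products corresponding to partitions of same length}, which is the bookkeeping you carry out. Your write-up is in fact slightly more explicit than the paper's, since you also name Proposition \ref{non isomorphic multi symmetric products for different lengths} for the unequal-length case with $g\geq 1$, a dependency the paper leaves implicit in its citation of Theorem \ref{Main Theorem 3}.
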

\begin{proof}
Follows from Theorem \ref{Main Theorem 3} and Proposition \ref{non isomorphic multi symmetric products corresponding to partitions of same length}.
\end{proof}
\begin{remark}
It can be noted that the condition on the parts of a partition in the hypothesis of Proposition \ref{non isomorphic multi symmetric products corresponding to partitions of same length having bigger parts} holds by default (cf. Definition \ref{partition of an integer}) for $g=1$.  Therefore, one can conclude that the upper bound, as in Theorem \ref{Main Theorem 3}, is attained by complex elliptic curves as well, only using Proposition \ref{non isomorphic multi symmetric products for different lengths} and Proposition \ref{non isomorphic multi symmetric products corresponding to partitions of same length having bigger parts}.  This can be thought of as an intermediate stage of the modification from Theorem \ref{Main Theorem 3} to Theorem \ref{Main Theorem 4}.
\end{remark}
\section*{Conflict of Interest Statement}
There is no conflict of interest.
\section*{Acknowledgements}
The authors would like to thank Dr. C. Gangopadhyay for many useful discussions.  The authors wish to thank Prof. J. Heinloth for several discussions and suggestions during the conference VBAC 2023 at University of Duisburg-Essen.  The first named author also acknowledge Indian Institute of Science Education and Research Tirupati for financial support (Award No. - IISER-T/Offer/PDRF/A.M./M/01/2021).   

\end{document}